\numberwithin{equation}{section}
\def\D{{\mathbb D}}  \def\T{{\mathbb T}}
\def\C{{\mathbb C}}  \def\N{{\mathbb N}}
\def\Z{{\mathbb Z}}
\def\R{\mathbb R}
\def\Q{\mathbb Q}
\def\({\left(}       \def\){\right)}
\def\Re{{\sf Re}\,}
\newtheorem{theorem}{Theorem}[section]
\newtheorem{lemma}[theorem]{Lemma}
\newtheorem{proposition}[theorem]{Proposition}
\newtheorem{corollary}[theorem]{Corollary}
\theoremstyle{definition}
\newtheorem{definition}[theorem]{Definition}
\newtheorem{example}[theorem]{Example}
\theoremstyle{remark}
\newtheorem{remark}[theorem]{Remark}
\numberwithin{equation}{section}
\begin{document}
\title[Semigroups of composition operators]{Semigroups of composition operators on Hardy spaces of Dirichlet series}

\author[M.D. Contreras]{Manuel D. Contreras}
\address{Departamento de Matem\'atica Aplicada II and IMUS, Escuela T\'ecnica Superior de Ingenier\'ia, Universidad de Sevilla,
	Camino de los Descubrimientos, s/n 41092, Sevilla, Spain}
\email{contreras@us.es}

\author[C. G\'omez-Cabello ]{Carlos G\'omez-Cabello}
\address{Departamento de Matem\'atica Aplicada II and IMUS, Edificio Celestino Mutis, Avda. Reina Mercedes, s/n. 41012 - Sevilla, Universidad de Sevilla,
Sevilla, Spain}
\email{cgcabello@us.es}

\author[L. Rodr\'iguez-Piazza]{Luis Rodr\'iguez-Piazza}
\address{Departmento de An\'alisis Matem\'atico and IMUS, Facultad de Matem\'aticas, Universidad
	de Sevilla, Calle Tarfia, s/n 41012 Sevilla, Spain}
\email{piazza@us.es}

\subjclass[2020]{Primary 30F44, 30B50, 47B33, 47D03, 30B50, 30K10}

\date{\today}

\keywords{Semigroups of composition operators, Hardy spaces of Dirichlet series.}

\thanks{This research was supported in part by Ministerio de Econom\'{\i}a y Competitividad, Spain,  and the European Union (FEDER), project PGC2018-094215-13-100,  and Junta de Andaluc{\'i}a, FQM133 and FQM-104.}

\maketitle

\begin{abstract}
We consider continuous semigroups of analytic functions $\{\Phi_t\}_{t\geq0}$ in the so-called \emph{Gordon-Hedenmalm class}  $\mathcal{G}$, that is, the family of analytic functions $\Phi:\C_+\to\C_+$ giving rise to bounded composition operators in the Hardy space of Dirichlet series $\mathcal{H}^2$. We show that there is a one-to-one correspondence between continuous semigroups $\{\Phi_{t}\}_{t\geq0}$ in the class $\mathcal G$  and  strongly continuous semigroups of composition operators $\{T_t\}_{t\geq0}$, where $T_t(f)=f\circ\Phi_t$, $f\in\mathcal{H}^2$. We extend these results for the range $p\in[1,\infty)$. For the case $p=\infty$, we prove that there is no non-trivial strongly continuous semigroup of composition operators in $\mathcal{H}^\infty$. We characterize the infinitesimal generators of continuous semigroups in the class $\mathcal G$
as  those Dirichlet series sending $\mathbb C_{+}$ into its closure.  Some dynamical properties of the semigroups are obtained from a description of the Koenigs map of the semigroup. 

\end{abstract}

\tableofcontents

\section{Introduction}

The study of Hardy spaces of Dirichlet series is a topic of increasing interest in the literature in the last 25 years starting with the fundamental paper of  Hedenmalm, Lindqvist, and  Seip \cite{HedLinSeip}.  The two recent monographs \cite{peris} and \cite{queffelecs} are samples of this reality where the state of the art can be found. Among the different relevant topics regarding these spaces, the study of composition operators plays a significant role. After some considerable effort, Gordon and Hedenmalm \cite{gorhed} got a characterization of bounded composition operators on the Hardy space of Dirichlet series $\mathcal H^{2}$. They introduced the following class where, as usual, given $\theta\in\R$, we denote
$
\C_{\theta}:=\{s\in\C:\text{Re}(s)>\theta\}
$
and  $\C_{+}=\C_{0}$: 
\begin{definition} Given an analytic function $\Phi:\C_{+}\to\C_{+}$, we say that $\Phi$ belongs to the {\sl Gordon-Hedenmalm class} $\mathcal{G}$ if:
\begin{enumerate}
    \item There exists $c_\Phi\in\N\cup\{0\}$ and $\varphi$ a Dirichlet series  such that
    \begin{equation}\label{chosenone}
        \Phi(s)=c_{\Phi}s+\varphi(s), \quad s\in \C_{+}.
    \end{equation}
    \item  If $c_{\Phi}=0$, then $\Phi(\C_+)\subset \C_{1/2}$.
\end{enumerate}
The value $c_{\Phi}$ is known as the {\sl characteristic} of the function $\Phi$. 
\end{definition}
Gordon and Hedenmalm proved that a composition operator is bounded on $\mathcal H^{2} $ if and only if its symbol belongs to $\mathcal G$. Later on, Bayart \cite{bayarto} obtained some results on the boundedness of composition operators on the spaces $\mathcal H^{p}$ for $1\leq p\leq +\infty$, being an open problem the complete characterization for $p\notin 2\N \cup\{ +\infty\}$. It is already known that $\Phi \in  \mathcal{G}$  is not sufficient to assure the boundedness of the composition operator $C_{\Phi}$ on $\mathcal H^{p}$, for $1\leq p<2$. This is a consequence of a deep result due to Harper \cite{Harper} about the so-called local embedding problem and  the previous work of Bayart and Brevig \cite{Bayart-Brevig} (see also \cite[p. 274]{queffelecs}).

Our goal is  to study semigroups of composition operators on the Hardy space of Dirichlet series $\mathcal H^{p}$ for $1\leq p\leq +\infty$. The study of semigroups of composition operators in the setting of Banach spaces of analytic functions was initiated with the seminal paper of Berkson and Porta \cite{porta}. They proved that a semigroup of composition operators $\{C_{\Phi_{t}}\}$ is strongly continuous in the Hardy space of the unit disc if and only if the family of functions  $\{\Phi_{t}\}$ is a continuous semigroup of holomorphic functions in the unit disc. This research has been extended to other spaces, having an analogue characterization in cases like Bergman spaces but rather different in others cases like the disc algebra, the Bloch space or BMOA. See \cite{Anderson}, \cite{Arevalo-Contreras-Piazza}, \cite{Avicou}, \cite{Betsakos}, \cite{BCDMPS}, \cite{ChaPar}, \cite{Gallardo} and references therein. 

In this paper, we tackle for the first time the study of semigroups of composition operators in the setting of Banach spaces of Dirichlet series. Before going to such topic, in Section \ref{sec:semigroups} we prove some properties of continuous semigroups of holomorphic functions (see Definition \ref{def:semigroup}) in the class $\mathcal G$.  In particular, we show that for functions belonging to such semigroups their characteristic must be 1, what implies that their Denjoy-Wolff point is $\infty$ and they are parabolic maps. 

In Section \ref{sec:semigroupscompositiooperators}, we prove 
\begin{theorem}[Theorems \ref{gordo1} and \ref{gordo11}]
Let $1\leq p<\infty$. Let $\{\Phi_t\}_{t\geq0}$ be a semigroup of analytic functions, such that $\Phi_t\in\mathcal{G}$ for every $t>0$ and denote by $T_{t}$ the composition operator $T_t(f)=f\circ\Phi_t$. Then, the following assertions are equivalent:
\begin{enumerate}[a)]
    \item $\{T_t\}_{t\geq0}$ is a strongly continuous semigroup in $\mathcal{H}^p$.
    \item $\{\Phi_t\}_{t\geq0}$ is a continuous semigroup.
    \item $\Phi_t(s)\to s$, as $t$ goes to $0$, uniformly in $\C_{\varepsilon}$, for every $\varepsilon>0$.
\end{enumerate}
\end{theorem}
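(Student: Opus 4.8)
The plan is to establish the cycle of implications (a)$\Rightarrow$(c)$\Rightarrow$(b)$\Rightarrow$(a). Throughout I will use the results of Section~\ref{sec:semigroups}: since each $\Phi_t$ lies in $\mathcal{G}$ and the family obeys the algebraic semigroup law, every $\Phi_t$ has characteristic $1$, so I may write $\Phi_t(s)=s+\varphi_t(s)$ with $\varphi_t$ a Dirichlet series, and the semigroup is parabolic with Denjoy--Wolff point at $\infty$. Applying Julia's lemma at the Denjoy--Wolff point $\infty$ with boundary dilation $1$ (which is exactly the normalization forced by characteristic $1$) gives, for each fixed $t$, the horocycle invariance $\operatorname{Re}\Phi_t(s)\ge\operatorname{Re}(s)$ on $\C_{+}$, i.e.\ $\operatorname{Re}\varphi_t\ge0$; this positivity is available under all three hypotheses. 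I also use two standard features of $\mathcal{H}^p$ for $1\le p<\infty$: the point evaluations at each $s_0\in\C_{1/2}$ and the coefficient functionals $g=\sum_n b_n n^{-s}\mapsto b_n$ are bounded, and the Dirichlet polynomials are dense. Each $T_t$ is bounded because $c_{\Phi_t}=1$ (Gordon--Hedenmalm for $p=2$, Bayart for general $p$).

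For (a)$\Rightarrow$(c) I would proceed as follows. Strong continuity gives $T_tf\to f$ in $\mathcal{H}^p$ as $t\to0^{+}$; composing with the bounded evaluation at $s_0\in\C_{1/2}$ yields $f(\Phi_t(s_0))\to f(s_0)$ for every $f\in\mathcal{H}^p$. The family $\{\Phi_t\}$ maps $\C_{+}$ into $\C_{+}$, hence is normal; if $\Psi$ is any locally uniform subsequential limit then, taking $f(s)=q^{-s}$ for the primes $q$, one gets $q^{-\Psi(s_0)}=q^{-s_0}$ for all $q$, which both rules out a boundary-constant limit and forces $\Psi=\mathrm{id}$. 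Thus $\Phi_t\to\mathrm{id}$ uniformly on compact subsets of $\C_{+}$ by Vitali's theorem. To upgrade this to uniform convergence on each $\C_\varepsilon$, I would first control the behaviour at $+\infty$: the bounded coefficient functional "coefficient of $q^{-s}$" applied to $T_t(q^{-s})=q^{-s}\,q^{-\varphi_t(s)}$ isolates $q^{-a_1(t)}$, where $a_1(t)$ is the constant term of $\varphi_t$, and strong continuity forces $q^{-a_1(t)}\to1$, hence $a_1(t)\to0$. Since $\varphi_t(s)-a_1(t)\to0$ uniformly in $\operatorname{Im}(s)$ as $\operatorname{Re}(s)\to+\infty$, combining this Dirichlet-series decay with the positivity $\operatorname{Re}\varphi_t\ge0$ (which, via the half-plane Schwarz--Pick bound $|\varphi_t'(s)|\le 2\operatorname{Re}\varphi_t(s)/\operatorname{Re}(s)$, lets pointwise control propagate) should yield $\sup_{\C_\varepsilon}|\varphi_t|\to0$. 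This passage from local-uniform to half-plane-uniform convergence is the first genuinely delicate point.

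The implication (c)$\Rightarrow$(b) is then essentially formal: uniform convergence on half-planes gives $\Phi_t\to\mathrm{id}$ uniformly on compacta as $t\to0^{+}$, which is continuity of the semigroup at the origin, and the standard continuity argument for semigroups of holomorphic self-maps (continuity at $0$ propagates to every $t_0$ through the law $\Phi_{t_0+h}=\Phi_{t_0}\circ\Phi_h$ and the local uniform continuity of the analytic map $\Phi_{t_0}$) finishes it. For (b)$\Rightarrow$(a) I would run the Berkson--Porta scheme. For a Dirichlet polynomial $f=\sum_{n=1}^N a_n n^{-s}$ one has $T_tf=\sum_n a_n n^{-\Phi_t(s)}$, and continuity of the symbols together with boundedness of the operators gives $\|T_tf-f\|_p\to0$ as $t\to0^{+}$ on this dense class; an $\varepsilon/3$ argument then extends the convergence to all of $\mathcal{H}^p$, and the semigroup law upgrades continuity at $0$ to strong continuity on $[0,\infty)$.

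I expect the \emph{main obstacle} to be the local uniform boundedness $\sup_{t\in[0,\delta]}\|T_t\|_{\mathcal{H}^p\to\mathcal{H}^p}<\infty$ required in (b)$\Rightarrow$(a), since the $\varepsilon/3$ argument is useless without it. This bound must be extracted from the monotonicity $\operatorname{Re}\Phi_t(s)\ge\operatorname{Re}(s)$ (which makes each $C_{\Phi_t}$ close to a contraction, exactly as the translation $s\mapsto s+c$ with $\operatorname{Re}(c)\ge0$ is a contraction on $\mathcal{H}^2$) together with the composition-operator norm estimates for characteristic $1$: this is transparent from the Hilbert-space geometry when $p=2$, but for $p\ne2$ it rests on Bayart's more delicate $\mathcal{H}^p$ machinery. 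Finally, the restriction $p<\infty$ enters precisely through the density of Dirichlet polynomials, which fails in $\mathcal{H}^\infty$ and is the structural reason the analogous statement breaks down there.
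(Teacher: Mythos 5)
Your proposal breaks down at the point you yourself flag as delicate: the passage from locally uniform to half-plane uniform convergence in (a)$\Rightarrow$(c). The specific step ``strong continuity forces $q^{-a_1(t)}\to1$ for all primes $q$, hence $a_1(t)\to0$'' is a non sequitur: it controls $\Re a_1(t)$ (since $|q^{-a_1(t)}|=q^{-\Re a_1(t)}$ and $\Re a_1(t)\geq0$), but not $\Im a_1(t)$. By Kronecker's theorem there exist reals $x_n$ with $|x_n|\to\infty$ and $m^{-ix_n}\to1$ for \emph{every} $m\in\N$; the paper records exactly this counterexample in the remark following Theorem \ref{gordo1}, where $\Phi_n(s)=s+ix_n$ gives $T_nf\to f$ in $\mathcal{H}^2$ while the symbols do not converge to the identity. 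So the vertical drift cannot be excluded by coefficient functionals alone; one must exploit the semigroup law, and doing so is the actual heart of the paper's proof: Bayart's Montel theorem (Theorem \ref{bayarto}) applied to the uniformly bounded family $m^{-\Phi_t}$ gives uniform convergence on half-planes, which on each vertical strip produces an integer branch index $k_t$ with $s-\Phi_t(s)$ close to $2\pi i k_t/\ln 2$; the semigroup law yields $k_{t+\tau}=k_t+k_\tau$, strong continuity together with the irrationality of $\ln2/\ln3$ shows each level set $\{t:k_t=k\}$ is closed, and Baire category then forces $k_t=0$ for small $t$. Your back-up mechanism also does not close the gap: the decay $\varphi_t(s)-a_1(t)\to0$ as $\Re s\to+\infty$ holds for each fixed $t$ with constants depending on $t$, and nothing you propose makes it uniform in $t$; likewise Schwarz--Pick propagation from compact sets cannot reach the vertically unbounded strips. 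Note that once strip-uniform convergence is available, the paper passes to half-planes not via Schwarz--Pick but by observing that $\varphi_t\in\mathcal{H}^{\infty}(\C_{\varepsilon})$ (Theorem \ref{queffelecs}) and that the supremum of a bounded analytic function on $\C_\varepsilon$ coincides with its supremum on any strip $\varepsilon<\Re s<\sigma_0$.

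The remaining pieces are essentially sound, and one is arguably slicker than the paper: your route to plain local uniform convergence (bounded point evaluations give $q^{-\Phi_t(s_0)}\to q^{-s_0}$ for all primes, irrationality pins down every normal-family limit as the identity, Vitali concludes) is correct and avoids Baire category for that weaker statement --- but it delivers only (b), not (c). In (b)$\Rightarrow$(a), the uniform boundedness you worry about is immediate rather than delicate: since $c_t=1$, each $C_{\Phi_t}$ is a \emph{contraction} on $\mathcal{H}^p$ (Remark \ref{norm}, due to Bayart for general $p$); the paper argues via weak convergence and \cite{engels}, treating $p=1$ by density of $\mathcal{H}^2$ in $\mathcal{H}^1$ plus the contraction bound, and your $\varepsilon/3$ argument on Dirichlet polynomials with that same bound is an acceptable alternative. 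A smaller slip: characteristic $1$ does not follow from the algebraic semigroup law alone (Proposition \ref{cste1} uses continuity, through univalence of the $\Phi_t$); under hypothesis (a) you should derive it after local uniform convergence is established, since the law only gives $c_t\in\{0,1\}$ and $\Phi_t(s_0)\to s_0$ for $1/2>\Re s_0>0$ rules out $c_t=0$ for small $t$. But the decisive defect is the $a_1(t)\to0$ step: repairing it requires the additivity $a_1(t+u)=a_1(t)+a_1(u)$ from the semigroup law together with a measurability or category argument --- in effect, re-deriving the mechanism the paper builds for exactly this purpose.
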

We stress in this result that the convergence of $\Phi_t$ to the identity is  uniform in half-planes $\C_{\varepsilon}$ and not only on compact sets.

The main result in Section \ref{sec:infgenerator}   is a characterization of the holomorphic functions in the right half-plane that are infinitesimal generators of semigroups in the class $\mathcal G$. Namely, 
\begin{theorem}[Theorem \ref{gcoro}]\label{gorgointroduction}
Let $H:\C_+\to\overline{\C}_+$ be analytic. Then, the following statements are equivalent:
\begin{enumerate}[a)]
    \item $H$ is the infinitesimal generator of a continuous semigroup of elements in the class $\mathcal{G}$.
    \item $H\in\mathcal{H}^{\infty}(\C_{\varepsilon}),$ for all $\varepsilon>0$.
    \item $H$ is a Dirichlet series.
\end{enumerate}
\end{theorem}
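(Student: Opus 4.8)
The plan is to establish the cycle of implications (a)$\Rightarrow$(c)$\Rightarrow$(b)$\Rightarrow$(a), treating (a)$\Leftrightarrow$(c) as the heart of the matter and reading (b) as the intermediate assertion that $H$ is a Dirichlet series bounded on every half-plane $\C_{\e}$. Throughout I would use two standing facts. First, the hypothesis $H(\C_+)\subset\overline{\C}_+$ means $\Re H\geq0$ on $\C_+$, so $H$ is a function of nonnegative real part and admits a Herglotz--Nevanlinna representation $H(s)=c\,s+i\gamma+(\text{Poisson--Cauchy integral of a positive measure }\mu)$ with $c\geq0$. Second, by the results of Section~\ref{sec:semigroups}, every element of a continuous semigroup in $\mathcal{G}$ has characteristic $1$ and Denjoy--Wolff point $\infty$, hence can be written $\Phi_t(s)=s+\varphi_t(s)$ with $\varphi_t$ a Dirichlet series.

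For (a)$\Rightarrow$(c), I would start from the definition of the infinitesimal generator, $H(s)=\lim_{t\to0^+}(\Phi_t(s)-s)/t=\lim_{t\to0^+}\varphi_t(s)/t$, where each $\varphi_t$ is a Dirichlet series. The idea is to recover $H$ as a Dirichlet series by passing the Dirichlet coefficient functionals through this limit: if $\varphi_t(s)=\sum_n a_n(t)\,n^{-s}$, then on a fixed half-plane $\C_\sigma$ one recovers $a_n(t)$ by a Bohr mean-value integral, and uniform convergence of $\varphi_t/t$ to $H$ on $\C_\sigma$ would force $a_n(t)/t$ to converge to some $b_n$, yielding $H(s)=\sum_n b_n\,n^{-s}$. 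The main obstacle is exactly this uniform control: one must upgrade the pointwise convergence of the difference quotients $\varphi_t/t$ to uniform (or at least normal-family) convergence on half-planes. I would obtain it by combining the uniform-on-$\C_\e$ convergence $\Phi_t\to\mathrm{id}$ already available for continuous semigroups in $\mathcal{G}$ (statement (c) of the theorem quoted from Section~\ref{sec:semigroupscompositiooperators}) with the a~priori bound $\Re H\geq0$, which controls the family $\{\varphi_t/t\}$ and makes it normal; the limit then lies in the closed space of Dirichlet series bounded on $\C_\sigma$.

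For (c)$\Rightarrow$(b), I would use the Herglotz--Nevanlinna representation together with the Dirichlet structure. Since a convergent Dirichlet series is bounded on $\C_\sigma$ for $\sigma$ large and tends to its first coefficient as $\Re s\to+\infty$, it cannot grow linearly; this forces the coefficient $c$ of the term $c\,s$ in the representation of $H$ to vanish. With $c=0$, the remaining integral term is uniformly bounded on each $\C_\e$ (the Cauchy kernel is bounded away from the axis and the relevant measure satisfies $\int(1+\tau^2)^{-1}\,d\mu<\infty$), so $H\in\mathcal{H}^\infty(\C_\e)$ for every $\e>0$; note that the positive real part is essential here, since a generic Dirichlet series such as the zeta function is unbounded near the axis.

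The implication (b)$\Rightarrow$(a) is the construction step: given that $H$ is a Dirichlet series with $\Re H\geq0$, I would solve the autonomous Cauchy problem $\partial_t\Phi_t=H\circ\Phi_t$, $\Phi_0=\mathrm{id}$. Because $\Re H\geq0$, the field points into $\C_+$ along $i\R$, so the flow $\{\Phi_t\}$ is a continuous semigroup of self-maps of $\C_+$ with generator $H$. It remains to show each $\Phi_t\in\mathcal{G}$: writing $\varphi_t(s)=\int_0^t H(\Phi_u(s))\,du$ and running a Picard iteration inside the Banach space of Dirichlet series bounded on half-planes, one checks that $\varphi_t$ is again a Dirichlet series (using that the composition of a Dirichlet series with a map of the form $s+\text{Dirichlet series}$ is a Dirichlet series) and that $\Phi_t(s)=s+\varphi_t(s)$ has characteristic exactly $1$, since $\varphi_t(s)\to t\,a_1$ as $\Re s\to+\infty$. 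This last point --- that the flow remains in $\mathcal{G}$ and keeps characteristic $1$ --- is the delicate part of this direction, and together with the uniform-convergence issue in (a)$\Rightarrow$(c) constitutes the core of the proof.
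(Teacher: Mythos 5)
Your cycle founders on two of its three edges; the step you flag as ``delicate,'' (b)$\Rightarrow$(a), is in fact the one you get right, and it is essentially the paper's own argument (Proposition \ref{gordo4} and Theorem \ref{gordo3}): Berkson--Porta for existence of the flow, a Picard--Lindel\"of contraction in the complete space of maps with $f(\cdot,t)-\mathrm{id}\in\mathcal{H}^{\infty}(\C_{1+\sigma})$, identification with the flow on a remote half-plane by uniqueness of the Cauchy problem, and then propagation from $t<\delta$ to all $t\geq 0$ via the semigroup law and the stability of $\mathcal G$ under composition (Proposition \ref{colorario}) --- this last propagation step is the only piece missing from your sketch. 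The genuine error is in (c)$\Rightarrow$(b). In the Herglotz--Nevanlinna representation $H(s)=i\gamma+cs+\int_{\R}\frac{1-i\tau s}{(s-i\tau)(1+\tau^{2})}\,d\mu(\tau)$, killing the linear term $c$ does \emph{not} make the integral term bounded on $\C_{\e}$: the kernel is not dominated by $C(\e)(1+\tau^{2})^{-1}$ on $\C_{\e}$ (test $s=\e+i\tau$ with $|\tau|$ large, where it is of size $1/\e$), and the condition $\int(1+\tau^{2})^{-1}d\mu<\infty$ holds for \emph{every} Herglotz function. Concretely, $H(s)=\log(1+s)$ (principal branch) maps $\C_{+}$ into $\C_{+}$, has $c=0$ and the stated integrability of its measure, yet is unbounded on every $\C_{\e}$; since your argument uses the Dirichlet hypothesis only to force $c=0$, it would ``prove'' boundedness for this function too. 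The paper instead passes between b) and c) through Bohr's theorem in the form of Theorem \ref{queffelecs}: a somewhere-convergent Dirichlet series whose range lies in a half-plane has $\sigma_{u}\leq 0$, hence is bounded on each $\C_{\e}$. That result exploits the Dirichlet structure (Bohr's identity $\sigma_{b}=\sigma_{u}$) in a way no soft representation formula can replace.

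The implication (a)$\Rightarrow$(c) also has an unclosed gap, and you half-acknowledge it. Normality of the family $\{\varphi_{t}/t\}$ (each of these does map $\C_{+}$ into $\overline{\C}_{+}$) only re-delivers locally uniform convergence on compacta, which Berkson--Porta already gives; and uniform convergence of $\Phi_{t}\to\mathrm{id}$ on $\C_{\e}$ says nothing about the quotients $(\Phi_{t}-\mathrm{id})/t$ without a rate in $t$. But your Bohr mean-value extraction of the coefficients, $a_{n}(t)/t\to b_{n}$, requires interchanging $\lim_{t\to 0^{+}}$ with $\lim_{T\to\infty}\frac{1}{2T}\int_{-T}^{T}$, i.e.\ uniform control along an \emph{entire vertical line}, which is not compact; Harnack-type bounds for maps into $\overline{\C}_{+}$ degrade like a power of $|\Im s|$ and do not suffice. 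The paper closes exactly this gap in Theorem \ref{gordo2} by a different device: since $\{C_{\Phi_{t}}\}$ is strongly continuous on $\mathcal{H}^{2}$ (Theorem \ref{gordo1}), the domain $D(A)$ of its generator is dense, so one picks $f\in D(A)$ with $\|f-2^{-s}\|_{\mathcal{H}^{2}}$ small and writes $\frac{\Phi_{t}(s)-s}{t}=\frac{f(\Phi_{t}(s))-f(s)}{t}\cdot\frac{1}{F(\Phi_{t}(s),s)}$, where $F$ is the two-variable difference quotient, bounded below near the diagonal; norm convergence of $(T_{t}f-f)/t$ together with the uniformly bounded point evaluations of Lemma \ref{pointwiseevaluaction} then upgrades the convergence of $(\Phi_{t}-\mathrm{id})/t$ to uniform convergence on vertical strips, after which the coefficient argument (Lemma \ref{hache}) is legitimate. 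Without this (or some substitute quantitative input from the operator semigroup), your (a)$\Rightarrow$(c) does not go through.
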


Beyond the semigroups of translations ($\Phi_{t}(s)=s+at$, with $a\in \overline{\C_{+}}\setminus \{0\}$, for $s\in \C_{+}$ and $t\geq 0$), it is not easy to provide explicit examples of continuous semigroups in $\mathcal G$. One of the main interest of Theorem \ref{gorgointroduction} is that it guarantees the existence of many different examples.

We also use Theorem \ref{gorgointroduction}  to describe the infinitesimal generator of the semigroups of composition operators as well as its domain (see Proposition \ref{Prop:infgen}). At the end of this section, we prove that there is no non-trivial uniformly continuous semigroup of composition operators in $\mathcal H^{p}$ for $1\leq p<\infty$ (see Theorem \ref{Thm:uniformly}). In Section \ref{sec:Koenigs} we study the Koenigs function of a continuous semigroup in the class $\mathcal G$ and show that, up to the semigroups of automorphims,  all of them are maps of zero hyperbolic step (see Proposition \ref{Prop:zerohypstep}).

We conclude the paper with  Section \ref{sec:hinfinito} where we show that there is no non-trivial strongly continuous semigroups of compositions operators in $\mathcal H^{\infty}$ (see Theorem \ref{gordoinfty}).

In Section \ref{sec:hardy}, we collect some definitions and properties about Hardy spaces of Dirichlet series and composition operators between them. Most of these properties are well-known and we give a reference for them. In other cases, such properties are probably new or we could not find a reference. In such a case, for the sake of completeness, we have provided a proof.

\section{The $\mathcal{H}^p$ spaces and their composition operators} \label{sec:hardy}

\subsection{Dirichlet series}
We denote by $\mathcal{D}$ the space of convergent Dirichlet series, namely the series
$$
\varphi(s)=\sum_{n=1}^{\infty}a_nn^{-s},
$$
which are convergent  in some half-plane $\C_{\theta}$. To any Dirichlet series $\varphi(s)=\sum_{n=1}^{\infty}a_nn^{-s}$, one can associate the following abscissae:
$$
\sigma_{c}(\varphi)=\inf\{ \Re s:  \sum_{n=1}^{\infty}a_nn^{-s} \textrm{ is convergent}\};
$$
$$
\sigma_{b}(\varphi)=\inf\{ \sigma:  \sum_{n=1}^{\infty}a_nn^{-s} \textrm{ is bounded on } \C_{\sigma}\};
$$
$$
\sigma_{u}(\varphi)=\inf\{ \sigma:  \sum_{n=1}^{\infty}a_nn^{-s} \textrm{ is uniformly convergent on } \C_{\sigma}\};
$$
$$
\sigma_{a}(\varphi)=\inf\{ \Re s:  \sum_{n=1}^{\infty}a_nn^{-s} \textrm{ is absolutely convergent}\}.
$$
It is known that
$$
\sigma _{c}(\varphi)\leq \sigma _{b}(\varphi)=\sigma _{u}(\varphi)\leq \sigma _{a}(\varphi)\leq \sigma _{c}(\varphi)+1.
$$
The proofs of these inequalities can be found in \cite[Section 4.2]{queffelecs} or \cite[Chapter 1]{peris}.

We shall also consider certain subspaces of $\mathcal D$. Indeed, given $\Lambda$ a multiplicative semigroup of $\N$ (that is, $1\in \Lambda$ and $mn\in \Lambda $ whenever $m,n\in \Lambda$), we define
\begin{equation*}
    \mathcal{D}_{\Lambda}:=\left\{
    f\in\mathcal{D}: a_n=0\text{  if $n\not \in\Lambda$ } \right\}.
\end{equation*}
Note that the class $\mathcal{D}$ is nothing but the result of taking $\Lambda=\N$ in $\mathcal{D}_{\Lambda}$.

At some point  we will also need the following results concerning Dirichlet series. See \cite[Theorem 8.4.1]{queffelecs} for a proof of the first one and \cite[Remark 1.20]{peris} for a detailed proof of the second one.
\begin{theorem}\label{queffelecs}
Let $\sigma,\nu\in\R$. Consider $\varphi:\C_{\sigma}\to\C_{\nu}$ analytic such that it can be written as a  Dirichlet series in a certain half-plane. Then, $\sigma_u(\varphi)\leq\sigma $. In particular, $\varphi$ is bounded in $\C_{\sigma+\varepsilon}$ for all $\varepsilon>0$.
\end{theorem}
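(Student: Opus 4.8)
The plan is to reduce everything to a single boundedness statement: for every $\varepsilon>0$ the function $\varphi$ is bounded on $\C_{\sigma+\varepsilon}$. Once this is in hand, the definition of $\sigma_b$ gives $\sigma_b(\varphi)\le\sigma+\varepsilon$ for all $\varepsilon>0$, hence $\sigma_b(\varphi)\le\sigma$, and the identity $\sigma_b=\sigma_u$ recorded in the chain of inequalities above yields $\sigma_u(\varphi)\le\sigma$; the final assertion is then exactly the meaning of $\sigma_b(\varphi)\le\sigma$.

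To get boundedness I would first linearize the half-plane hypothesis. Since $\varphi(\C_\sigma)\subset\C_\nu$, the real part of $\varphi-\nu+1$ exceeds $1$, so this function never vanishes and
\[
G:=\frac{1}{\varphi-\nu+1}
\]
is analytic on $\C_\sigma$ with $|G|<1$ and $G\neq0$ there. The point of this substitution is that $G$ is now \emph{bounded}, so normal-family techniques become available, while a lower bound for $|G|$ translates directly into an upper bound for $|\varphi|$. One checks that $G$ is again a Dirichlet series: writing $\varphi-\nu+1=\sum_n b_nn^{-s}$, its first coefficient is $b_1=a_1-\nu+1$, and since $a_1=\lim_{\Re s\to\infty}\varphi(s)\in\overline{\C}_\nu$ we have $\Re b_1\ge1$, so $b_1\neq0$; the formal reciprocal of a Dirichlet series with nonzero first coefficient converges absolutely in a half-plane, so $G=\sum_n g_nn^{-s}$ with $g_1=1/b_1\neq0$ and finite abscissa of absolute convergence.

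The heart of the matter is to prove $\inf_{\C_{\sigma+\varepsilon}}|G|>0$, i.e.\ that $\varphi$ can only blow up as $\Re s\to\sigma$ and nowhere else in $\C_{\sigma+\varepsilon}$. Because $G$ has finite $\sigma_a$ and $g_1\neq0$, the estimate $|G(s)-g_1|\le\sum_{n\ge2}|g_n|n^{-\Re s}$ shows $G(s)\to g_1$ \emph{uniformly in $\Im s$} as $\Re s\to+\infty$. Now suppose, for contradiction, that $|G(s_k)|\to0$ with $s_k=x_k+it_k\in\C_{\sigma+\varepsilon}$. The uniform behaviour at infinity forces $x_k$ to remain bounded, say $x_k\in[\sigma+\varepsilon,R]$, so a subsequence has $x_k\to x_0\in[\sigma+\varepsilon,R]$. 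The vertical translates $G_k(s):=G(s+it_k)$ are zero-free on $\C_\sigma$ and uniformly bounded by $1$, so by Montel's theorem a subsequence converges locally uniformly on $\C_\sigma$ to some $\tilde G$. Since the uniform convergence to $g_1$ at $+\infty$ is invariant under vertical translation, $\tilde G(s)\to g_1\neq0$ as $\Re s\to\infty$, so $\tilde G\not\equiv0$, and Hurwitz's theorem then forces $\tilde G$ to be zero-free on $\C_\sigma$. But $G_k(x_k)=G(s_k)\to0$ together with $x_k\to x_0$ gives $\tilde G(x_0)=0$ with $x_0\in\C_\sigma$, a contradiction. Hence $m:=\inf_{\C_{\sigma+\varepsilon}}|G|>0$ and $|\varphi|\le|\nu|+1+1/m$ on $\C_{\sigma+\varepsilon}$, completing the boundedness step.

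I expect this last step to be the main obstacle, for the following reason: mapping into the half-plane $\C_\nu$ does \emph{not} by itself bound $\varphi$ — indeed $\varphi$ may be genuinely unbounded on $\C_\sigma$, as $\varphi(s)=(1+2^{-s})/(1-2^{-s})$ shows (here $\sigma=\nu=0$, $\varphi$ is the Dirichlet series $1+2\sum_{k\ge1}(2^k)^{-s}$, bounded on each $\C_\varepsilon$ but blowing up as $s\to0^+$). Controlling the vertical direction $|\Im s|\to\infty$ is precisely where the Dirichlet-series structure enters, through the uniform-in-$\Im s$ convergence to $g_1$; a purely geometric Schwarz–Pick argument is insufficient because the boundary point $+\infty$ at which $\varphi$ has a definite limit lies at infinite hyperbolic distance.
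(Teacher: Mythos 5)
Your proof is correct, and a first remark is in order: the paper gives no argument for this theorem at all — it is imported wholesale from \cite[Theorem 8.4.1]{queffelecs} — so the comparison is with that standard external argument rather than with anything internal to the paper. Your reduction (boundedness on each $\C_{\sigma+\varepsilon}$, then Bohr's theorem in the form $\sigma_b=\sigma_u$, which the paper records in its chain of abscissa inequalities) is legitimate, and your treatment of the crux, the lower bound $\inf_{\C_{\sigma+\varepsilon}}|G|>0$, is sound in every detail: the uniform-in-$\Im s$ convergence $G\to g_1\neq0$ confines the real parts $x_k$ to a compact interval; the estimate $|G(s)-g_1|\le\sum_{n\ge2}|g_n|n^{-\Re s}$ depends on $\Re s$ only, so it passes to the vertical translates $G_k$ and to the Montel limit $\tilde G$, ruling out $\tilde G\equiv0$; Hurwitz then makes $\tilde G$ zero-free on the connected domain $\C_\sigma$, contradicting $\tilde G(x_0)=\lim_k G_k(x_k)=0$. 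This vertical-translation compactness is precisely the almost-periodicity mechanism behind the classical proofs (Gordon--Hedenmalm, Queff\'elec--Queff\'elec) and behind Bayart's Montel-type theorem (Theorem \ref{bayarto} of the paper), so your argument is a self-contained rendering of the right idea, not a shortcut around it — and your closing diagnosis is apt, since $\varphi(s)=s$ maps $\C_0$ into $\C_0$ and is unbounded on every $\C_\varepsilon$, so the Dirichlet hypothesis must enter exactly where you place it. One genuine simplification is available: the Hewitt--Williamson-type reciprocal lemma you invoke is dispensable, because you never use the Dirichlet expansion of $G$ except to get its uniform limit at $+\infty$, and that limit follows directly from $\sigma_a(\varphi)\le\sigma_c(\varphi)+1<\infty$: one has $|\varphi(s)-a_1|\le\sum_{n\ge2}|a_n|n^{-\Re s}\to0$ uniformly in $\Im s$, whence $G(s)\to 1/(a_1-\nu+1)\neq 0$ uniformly, using only $\Re a_1\ge\nu$; with this, the verification that $G$ is itself a Dirichlet series with $g_1\neq0$ can be deleted without loss.
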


\begin{lemma}\label{lemaso}
Let $m$ be a positive integer and $\varphi(s)=\sum_{n=m}^{\infty}a_nn^{-s}$ be a Dirichlet series in $\mathcal{D}$ whose first non-zero coefficient is $a_m$. Then, $m^s\varphi (s)\to a_m$ as $\Re (s)\to+\infty$.
\end{lemma}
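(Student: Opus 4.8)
The plan is to factor the $m^s$ inside the series and isolate the leading term. Writing
$$
m^s\varphi(s)=\sum_{n=m}^{\infty}a_n\,(m/n)^s=a_m+\sum_{n=m+1}^{\infty}a_n\,(m/n)^s,
$$
the claim reduces to showing that the tail $\sum_{n=m+1}^{\infty}a_n(m/n)^s$ tends to $0$ as $\Re(s)\to+\infty$.

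The key tool is absolute convergence in a half-plane. Since $\varphi\in\mathcal{D}$, the chain of inequalities recalled above yields $\sigma_a(\varphi)\leq\sigma_c(\varphi)+1<+\infty$, so the series converges absolutely on $\C_{\sigma_0}$ for any fixed $\sigma_0>\sigma_a(\varphi)$; in particular the constant $C:=\sum_{n=m+1}^{\infty}|a_n|(m/n)^{\sigma_0}$ is finite.

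Next I would extract a uniform geometric decay factor. For $s$ with $\sigma:=\Re(s)>\sigma_0$ and every $n\geq m+1$ one has $0<m/n\leq m/(m+1)<1$, whence $(m/n)^{\sigma-\sigma_0}\leq(m/(m+1))^{\sigma-\sigma_0}$. Estimating the tail term by term then gives
$$
\Big|\sum_{n=m+1}^{\infty}a_n\,(m/n)^s\Big|\leq\sum_{n=m+1}^{\infty}|a_n|\,(m/n)^{\sigma}\leq\Big(\frac{m}{m+1}\Big)^{\sigma-\sigma_0}C.
$$
Since $m/(m+1)<1$, the right-hand side tends to $0$ as $\sigma\to+\infty$, and therefore $m^s\varphi(s)\to a_m$, as desired.

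There is no substantial obstacle here: the argument is a routine tail estimate. The only points requiring care are to pin down a half-plane of absolute convergence (so that $C$ is finite) and to exhibit a single geometric factor $(m/(m+1))^{\sigma-\sigma_0}$ dominating the decay of every tail term uniformly; once these are in place the conclusion is immediate.
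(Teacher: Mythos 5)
Your proof is correct, and it is the standard argument: the paper itself does not prove this lemma but cites \cite[Remark 1.20]{peris}, where essentially the same tail estimate is carried out. Factoring out $a_m$, fixing $\sigma_0>\sigma_a(\varphi)$ (finite because $\sigma_a\leq\sigma_c+1$), and dominating the tail uniformly in $\Im s$ by $\left(\frac{m}{m+1}\right)^{\sigma-\sigma_0}C$ is exactly the intended routine verification, with all the delicate points (finiteness of $C$, uniformity of the geometric factor over $n\geq m+1$) handled properly.
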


The next theorem is probably well-known for specialists, but we could not find a reference so that we have included its proof. 

\begin{theorem}\label{thm:nounivalent}
Let $\varphi(s)=\sum_{n=1}^{\infty}a_nn^{-s}$ be a Dirichlet series convergent in $\C_{\eta}$. Then $\varphi$ is not one-to-one in  $\C_{\eta}$.
\end{theorem}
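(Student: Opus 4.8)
The plan is to reduce to the case where the first non-vanishing coefficient has index $m\ge 2$, exploit the vertical periodicity of the leading monomial $a_m m^{-s}$, and then upgrade this periodicity to genuine non-injectivity of $\varphi$ by a Rouch\'e (argument principle) count.

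First I would dispose of trivial cases. If $\varphi$ is constant it is obviously not one-to-one, so assume $\varphi$ is non-constant and let $m\ge 1$ be the least index with $a_m\ne 0$. If $m=1$, replace $\varphi$ by $\varphi-a_1$: this does not affect injectivity and produces a (still non-constant) Dirichlet series whose least non-zero coefficient has index $\ge 2$. Thus we may assume $m\ge 2$, so that the leading term $w(s):=a_m m^{-s}$ has the purely imaginary period $iP$ with $P:=2\pi/\log m>0$. By Lemma \ref{lemaso}, writing $\varphi=w+R$ with $R(s)=\sum_{n>m}a_n n^{-s}$, one has $m^{\Re s}|R(s)|\to 0$ as $\Re s\to+\infty$, uniformly in $\Im s$ (because $|m^s R(s)|\le\sum_{n>m}|a_n|(m/n)^{\Re s}$ is independent of $\Im s$ and decreasing in $\Re s$). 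Hence $|R(s)|\le \e(\sigma_0)\,m^{-\Re s}$ whenever $\Re s\ge\sigma_0$, with $\e(\sigma_0)\to 0$ as $\sigma_0\to+\infty$.

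Next I would set up the comparison. Fix $\sigma_0>\max\{\eta,\sigma_a(\varphi)\}$ large, put $\sigma_1:=\sigma_0+1$ and choose the value $w_0:=a_m m^{-\sigma_1}$. On the boundary of the rectangle $Q=[\sigma_0,\sigma_0+L]\times[P/2,5P/2]$ (with $L$ large) I compare $\varphi-w_0$ with $w-w_0=a_m(m^{-s}-m^{-\sigma_1})$. The key point is that the edges of $Q$ have been placed so that $|w-w_0|$ is bounded below by a multiple of $m^{-\Re s}$: on the two horizontal edges $\Im s\in\{P/2,5P/2\}$ one has $m^{-s}=-m^{-\Re s}$ (since $(\Im s)\log m\in\{\pi,5\pi\}$), whence $|m^{-s}-m^{-\sigma_1}|=m^{-\Re s}+m^{-\sigma_1}$; on the left edge $\Re s=\sigma_0$ the reverse triangle inequality gives $|m^{-s}-m^{-\sigma_1}|\ge m^{-\sigma_0}-m^{-\sigma_1}=m^{-\sigma_0}(1-1/m)$; and on the right edge $|m^{-s}-m^{-\sigma_1}|\ge m^{-\sigma_1}-m^{-\sigma_0-L}$ is bounded below for $L$ large. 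Combining these lower bounds with $|R(s)|\le\e(\sigma_0)m^{-\Re s}$, the strict inequality $|\varphi-w|=|R|<|w-w_0|$ holds on all of $\partial Q$ once $\sigma_0$ is large enough that $\e(\sigma_0)<|a_m|(1-1/m)$ and $L$ is large enough.

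Rouch\'e's theorem then gives that $\varphi-w_0$ and $w-w_0$ have the same number of zeros, counted with multiplicity, inside $Q$. Since $w(s)=w_0$ exactly when $s=\sigma_1+ikP$, $k\in\Z$, and since $Q$ contains precisely the two such points $\sigma_1+iP$ and $\sigma_1+2iP$, the function $\varphi-w_0$ has exactly two zeros (with multiplicity) in $Q\subset\C_\eta$. If these are two distinct points, $\varphi$ takes the value $w_0$ twice; if instead they coincide in a double zero $s_*$, then $\varphi'(s_*)=0$ and $\varphi$ fails to be locally injective at $s_*$. In either case $\varphi$ is not one-to-one in $\C_\eta$, as required. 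The main obstacle is the uniform control of the remainder $R$ on the \emph{left} edge $\Re s=\sigma_0$, where Lemma \ref{lemaso} only helps for large real part; this is precisely why $\sigma_0$ must be taken large, which is harmless since non-injectivity in the smaller half-plane $\C_{\sigma_0}\subset\C_\eta$ already implies non-injectivity in $\C_\eta$.
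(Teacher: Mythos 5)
Your proposal is correct and follows essentially the same route as the paper's: isolate the leading monomial $a_m m^{-s}$, control the tail by the Lemma \ref{lemaso}-type decay $|R(s)|\le \e\, m^{-\Re s}$ on a remote half-plane, and apply Rouch\'e on a rectangle whose horizontal edges sit at half-periods of $m^{-s}$, where that monomial is negative real and hence well separated from the positive target value. The only difference is packaging: the paper uses infinitely many one-period rectangles $\Gamma_m$, each containing one solution of $\varphi(s)=N^{-x}$ (so distinct solutions come for free and no multiplicity discussion is needed), whereas you use a single two-period rectangle and correctly dispose of the possible double-zero case via $\varphi'(s_*)=0$.
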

\begin{proof}
Take $N=\min\{n\geq 2:\, a_n\neq 0\}$. We may  assume that $a_1=0$ and $a_N=1$. Write $\varphi (s)=N^{-s}+h(s)$ and $\gamma(s)=N^{-s}$ for all $s\in \C_\eta$. 

Take $\varepsilon< (N-1)/N^2$. Since $\lim_{\Re s\to +\infty} h(s)N^{s}=0$ (see Lemma \ref{lemaso}), there is $s_0>\eta$ such that $|h(s)|<N^{-\Re s }\varepsilon $ whenever  $\Re s>s_0$. 

Take $x>s_0+1$  and $m\in \Z$. Write
$g(s):= N^{-s}- N^{-x}$ and $f(s):=\varphi(s)- N^{-x}$, for $s\in \C_\eta$. Consider the segments
\begin{equation*}
\begin{split}
\Gamma_{a,m} &=[a-i\pi/\ln (N), a+i\pi/\ln (N)] +2m\pi i/\ln(N),\\
 \Delta_{a,m}&=[a-i\pi/\ln (N), a+2-i\pi/\ln (N)]+2m\pi i/\ln(N),
\end{split}
\end{equation*}
for every $a\in \R$.
Notice that $\gamma(\Gamma_{a,m})$ is the circle centered at $0$ and with radius $N^{-a}$, that is $C(0,N^{-a})$, and $\gamma(\Delta_{a,m})$ is the segment that joins the points $-N^{-a}$ with $- N^{-(a+2)}$. Now, take the rectangle
$$
\Gamma_m=\Gamma_{x-1,m}\cup\Gamma_{x+1,m}\cup \Delta_{x-1,m}\cup \Delta_{x-1,m+1}.
$$
On the one hand, notice that $\Gamma_m$ is contained in the half-plane $\Re s>s_0$. Thus, for any $s\in \Gamma_{m}$, it holds 
\begin{equation}\label{eq:nounivalent1}
|g(s)-f(s)|=|h(s)|<N^{-\Re s }\varepsilon\leq N^{-(x-1)}\varepsilon.
\end{equation}
On the other hand, observe that
$$
\gamma (\Gamma_m)=\gamma (\Gamma_0)=C(0,N^{-(x-1)})\cup C(0,N^{-(x+1)})\cup [ - N^{-(x-1)}, - N^{-(x+1)}].
$$
Thus, for  $s\in \Gamma_m$, we have 
\begin{equation}\label{eq:nounivalent2}
\begin{split}
|g(s)|&=|\gamma(s)- N^{-x}|=| N^{-s}-N^{-x}|\\
&\geq \min\{| N^{-(x-1)}-N^{-x}|,| N^{-(x+1)}-N^{-x}|\} \geq N^{-x-1} (N-1).
\end{split}
\end{equation}
Since $\varepsilon< (N-1)/N^2$, we deduce that for all $s\in \Gamma_m$, \eqref{eq:nounivalent1} and  \eqref{eq:nounivalent2} imply
$$
|g(s)-f(s)|<|g(s)|.
$$
The point $x+i2m\pi/\ln(N)$ is the center of $\Gamma_m$ and $g(x+i2m\pi/\ln(N))=0$ (in fact, it is the unique zero of $g$ in the interior of such rectangle). Now, Rouch\'e's Theorem implies that the equation $\varphi  (s)=N^{-x}$ has a solution in the interior of each $\Gamma_{m}$ what clearly shows that $\varphi$ is not univalent. 
\end{proof}

\subsection{The $\mathcal{H}^p$ spaces}
Before moving onto the study of composition operators between the $\mathcal{H}^p$ spaces, we shall recall how these ones are constructed.

If $\{ p_{j}\}_{j\geq 1}$ is the increasing sequence of prime numbers, given a natural number $n$, we will denote its prime number factorization
$$
n=p_{1}^{\kappa_{1}}p_{2}^{\kappa_{2}} \cdots p_{d}^{\kappa_{d}}
$$
by $n=(p_{j})^{\kappa}$. This associates uniquely to $n$ the finite multi-index $\kappa (n)=(\kappa_{1}, \kappa_{2}, \dots).$ 

The Bohr lift of a Dirichlet series 
$$
\varphi (s)=\sum_{n\geq 1}a_{n} n^{-s}
$$
is the power series $\mathcal B \varphi (z)=\sum _{n\geq 1}a_{n}z^{\kappa(n)}$, $z\in \D^{\infty}$. For $1\leq p<\infty$, we define the Hardy space of Dirichlet series $\mathcal H ^{p}$ as the space of Dirichlet series $\varphi$ such that $\mathcal B\varphi$ is in $H^{p }(\mathbb D^{\infty})$, and we set
$$
\|\varphi \|_{\mathcal H ^{p}}:=\|\mathcal B \varphi \|_{H^{p }(\mathbb D^{\infty})}=\left(\int_{\mathbb T ^{\infty}}|\mathcal B \varphi(z)|^{p}\, dm_{\infty}(z)\right)^{1/p},
$$
where $m_{\infty}$ denotes the Haar measure of infinite polytorus $\mathbb T ^{\infty}$, which is simply the product of the normalized Lebesgue measure of the torus $\mathbb T$ in each variable. For $p=2$, we have that
$$
\|\varphi \|_{\mathcal H ^{2}}=\left( \sum_{n=1}^{\infty}|a_{n}|^{2} \right)^{1/2}.
$$
 It is easy to see that  $\mathcal{H}^q\subset \mathcal{H}^p$ and that given $\varphi\in\mathcal{H}^q$ it holds $\| \varphi \|_p\leq\| \varphi \|_q$,  whenever $1\leq p\leq q< \infty$. 
The construction of the Hardy spaces of Dirichlet series that we have just outlined  gives rise to a Banach space of analytic functions in the half-plane $\C_{1/2}$. In fact, their abscissae of absolute convergence, $\sigma_{a}(\varphi)$, is smaller or equal to $1/2$. This is a consequence of Helson's inequality: 
 \begin{equation}\label{estima}
  \left(
 \sum_{n=1}^{\infty}\frac{|a_n|^2}{d(n)}
 \right)^{\frac 12}\leq\|\varphi\|_1
    \end{equation}
 for any function $\varphi(s)=\sum_{n=1}^{\infty}a_nn^{-s}\in\mathcal{H}^1$, where $d(n)$ denotes the number of divisors of $n$ (see \cite[Theorem 6.5.9]{queffelecs}).

For  $\varepsilon\geq 0$, 
 the space of bounded Dirichlet series $\mathcal{H}^{\infty}(\C_{\varepsilon})$ consists of all analytic functions bounded in $\C_\varepsilon$ such that they can be written as a Dirichlet series in a certain half-plane. If we endow the space $\mathcal{H}^{\infty} (\C_{\varepsilon})$ with the norm given by
 \[
\|\varphi\|_{\mathcal{H}^{\infty}(\C_{\varepsilon})}=\sup_{s\in\C_\varepsilon}|\varphi(s)|,
\]
we obtain a Banach space. For $\varepsilon=0$, we simply write   $\mathcal{H}^{\infty}$ to denote $\mathcal {H}^{\infty } (\C_{+})$. It holds that $\mathcal H^{\infty}\subset \mathcal H^{p}$ for all $p<+\infty$.

Given $\Lambda$ a multiplicative semigroup of $\N$, a related space that will appear throughout the paper is 
 $$
 \mathcal{H}^{\infty}_{\Lambda}(\C_{\varepsilon})=\{\varphi(s)=\sum _{n=1}^{\infty}a_{n}n^{-s}\in  \mathcal{H}^{\infty}(\C_{\varepsilon}):\ a_n=0\text{  if \ } n\not \in\Lambda   \}.
$$
It is clear that $ \mathcal{H}^{\infty}_{\Lambda}(\C_{\varepsilon})$ is a closed subspace of $ \mathcal{H}^{\infty}(\C_{\varepsilon})$. Moreover,  the class $\mathcal{D}_{\Lambda}$ is an algebra and so $ \mathcal{H}^{\infty}_{\Lambda}(\C_{\varepsilon})$ is a unital Banach algebra. Indeed, let $\varphi_1(s)=\sum_{n=1}^{\infty}b_nn^{-s}$ and $\varphi _2(s)=\sum_{n=1}^{\infty}c_nn^{-s}$ be two elements in $\mathcal{D}_{\Lambda}$. Then,  their product is given by
\begin{align*}
    \varphi_{1}\varphi_2(s)=\sum_{n=1}^{\infty}\left(\sum_{kl=n}b_kc_l\right)n^{-s}=\sum_{n=1}^{\infty}d_nn^{-s}.
\end{align*}
Now, if $n=kl\not\in\Lambda$, we have that either $k\not\in\Lambda$ or $l\not\in\Lambda$. In any case, this implies that the corresponding coefficient vanishes and so does $d_n$. Then, $    \varphi_{1}\varphi_2\in\mathcal{D}_{\Lambda}$. Since $\mathcal{D}_{\Lambda}$ is a linear space, we deduce that it is also an algebra.

It is known that the pointwise evaluation functionals are bounded on $\mathcal{H}^p$. Namely, given $s\in \C$ such that $\Re s>1/2$, the functional $\delta_{s}(\varphi)=\varphi (s)$, for all $\varphi\in \mathcal{H}^p$, satisfies that
\begin{equation}\label{normevaluationfunciotnal}
\|\delta_{s}\|_{(\mathcal{H}^p)^{*}}=(\zeta(2\Re s))^{1/p},
\end{equation}
where $\zeta$ denotes  the Riemann zeta map (see \cite[page 273]{queffelecs}).   In the case of $\mathcal H^{2}$, we will also need  the boundedness of the evaluation of the derivative. Thus, for the sake of completeness, we have included a brief proof of this  fact. 

\begin{lemma}\label{pointwiseevaluaction}
Let $k\in \N\cup\{0\}$ and $s\in \C_{1/2}$. The functional $\delta^k_s:\mathcal{H}^2\to\C$ given by
\[
\varphi \mapsto \varphi ^{(k)}(s)
\]
is bounded. In fact, given $\sigma>0$, it is uniformly bounded  for $s\in\C_{1/2+\sigma}$.
\end{lemma}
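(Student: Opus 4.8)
The plan is to write the $k$-th derivative as an explicit Dirichlet series and then represent $\delta^k_s$ as an inner product against a concrete element of $\mathcal{H}^2$, after which the bound follows from the Cauchy--Schwarz inequality. Recall that any $\varphi\in\mathcal{H}^2$ has $\sigma_a(\varphi)\le 1/2$, so the series $\varphi(s)=\sum_{n\geq1}a_n n^{-s}$ converges absolutely and locally uniformly on $\C_{1/2}$. The formally differentiated series $\sum_{n\geq1}a_n(-\log n)^k n^{-s}$ again converges absolutely on $\C_{1/2}$ (since $(\log n)^k n^{-\Re s}\le C_\varepsilon\, n^{\varepsilon-\Re s}$ for any $\varepsilon>0$, and $\Re s-\varepsilon>1/2$ is achievable), hence term-by-term differentiation is justified and
$$
\varphi^{(k)}(s)=\sum_{n=1}^{\infty}a_n(-\log n)^k n^{-s},\qquad s\in\C_{1/2}.
$$

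With this in hand, I would apply Cauchy--Schwarz directly to the pairing $\sum_n a_n\,\big((-\log n)^k n^{-s}\big)$, using that $\|\varphi\|_{\mathcal{H}^2}^2=\sum_n|a_n|^2$. This yields
$$
\big|\varphi^{(k)}(s)\big|\le\|\varphi\|_{\mathcal{H}^2}\,\Big(\sum_{n=1}^{\infty}(\log n)^{2k}\,n^{-2\Re s}\Big)^{1/2}=\|\varphi\|_{\mathcal{H}^2}\,\big(S_k(\Re s)\big)^{1/2},
$$
where $S_k(x):=\sum_{n\geq1}(\log n)^{2k} n^{-2x}$. (In fact the vector $g_s$ with coefficients $(-\log n)^k n^{-\bar s}$ reproduces $\delta^k_s$, so the inequality is an equality and $\|\delta^k_s\|_{(\mathcal{H}^2)^*}=S_k(\Re s)^{1/2}$, but only the upper bound is needed.) The key observation is that $S_k(x)=\zeta^{(2k)}(2x)$ for $k\ge1$ (and $S_0(x)=\zeta(2x)$, recovering \eqref{normevaluationfunciotnal}), and this is finite whenever $2x>1$, i.e.\ for every $s\in\C_{1/2}$; this already proves boundedness.

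For the uniform statement I would simply note that each summand of $S_k$ is nonincreasing in $x$, so $x\mapsto S_k(x)$ is nonincreasing on $(1/2,\infty)$. Consequently, for $s\in\C_{1/2+\sigma}$ we get $S_k(\Re s)\le S_k(1/2+\sigma)<\infty$, whence $\|\delta^k_s\|_{(\mathcal{H}^2)^*}\le S_k(1/2+\sigma)^{1/2}$ uniformly. I do not expect a genuine obstacle here: the argument is a routine reproducing-kernel/Cauchy--Schwarz computation, and the only point deserving care is the justification of term-by-term differentiation, which is handled by the absolute and locally uniform convergence of the Dirichlet series on $\C_{1/2}$ guaranteed by $\sigma_a(\varphi)\le 1/2$.
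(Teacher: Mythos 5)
Your proposal is correct and follows essentially the same route as the paper: term-by-term differentiation of the Dirichlet series followed by Cauchy--Schwarz, with the constant $\bigl(\sum_{n}(\log n)^{2k}n^{-(1+2\sigma)}\bigr)^{1/2}$ depending only on $\sigma$. Your extra observations (the identification $S_k(x)=\zeta^{(2k)}(2x)$, the exact norm via the reproducing element, and the explicit justification of termwise differentiation, which the paper merely recalls) are correct refinements but do not change the argument.
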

\begin{proof}
Writing $\varphi (s)=\sum_{n=1}^{\infty}a_nn^{-s}$, we recall that
\[
\varphi^{(k)}(s)=\sum_{n=1}^{\infty}(-\log n)^ka_nn^{-s},\quad s\in \C_{1/2}.
\]
Fix $\sigma>0$. For $s\in\C_{1/2+\sigma}$,
$$
|\varphi^{(k)}(s)  |\leq \sum_{n=1}^{\infty}\frac{(\log n)^k|a_n|}{n^{\text{Re}(s)}}\leq\left(\sum_{n=1}^{\infty}\frac{(\log n)^{2k}}{n^{1+2\sigma}}\right)^{1/2}\left(\sum_{n=1}^{\infty}|a_n|^2\right)^{1/2}<\infty.
$$
Then, $|\varphi^{(k)}(s)|\leq C(\sigma)\|\varphi \|_{\mathcal{H}^2}$, $s\in\C_{1/2+\sigma}$. 
\end{proof}

The next Montel-type theorem for Dirichlet series,  due to Bayart, will be needed several times in this paper. We state it for the sake of clearness:
 \begin{theorem}[Bayart  \cite{bayarto}] \label{bayarto}
Let $\{f_j\}_j$ be a bounded sequence in $\mathcal{H}^{\infty}$. Then, there exist both a subsequence $\{f_{n_k}\}$  and a function $f\in\mathcal{H}^{\infty}$, such that $f_{n_k}$ converges uniformly to $f$ on each half-plane $\C_{\varepsilon}$, for all $\varepsilon>0$.
\end{theorem}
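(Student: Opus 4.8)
The plan is to reduce the statement to a diagonal extraction on the Dirichlet coefficients followed by the classical Montel theorem on the half-plane, and then to upgrade the resulting locally uniform convergence to uniform convergence on each $\C_{\varepsilon}$ by means of vertical translations.

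Set $M=\sup_j\|f_j\|_{\mathcal{H}^{\infty}}$ and write $f_j(s)=\sum_{n}a_n^{(j)}n^{-s}$. Each $f_j$ maps $\C_{+}$ into the bounded set $\{|w|\le M\}$, so Theorem~\ref{queffelecs} gives $\sigma_u(f_j)\le 0$ and the series converges uniformly on every $\C_{\sigma}$ with $\sigma>0$; integrating term by term in $a_n^{(j)}=\lim_{T\to\infty}\frac{1}{2T}\int_{-T}^{T}f_j(\sigma+it)\,n^{\sigma+it}\,dt$ yields $|a_n^{(j)}|\le M n^{\sigma}$ for every $\sigma>0$, hence $|a_n^{(j)}|\le M$. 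A diagonal argument over $n$ now produces a subsequence $\{f_{n_k}\}$ with $a_n^{(n_k)}\to a_n$ for each $n$, and I set $f(s)=\sum_n a_n n^{-s}$, which converges absolutely on $\C_{1}$. Applying the classical Montel theorem to the uniformly bounded family $\{f_{n_k}\}$ on $\C_{+}$, a further subsequence converges locally uniformly to a bounded analytic $g$; since $f_{n_k}\to f$ on $\C_{1}$ by dominated convergence, $g=f$ there, so $f$ extends to $g\in\mathcal{H}^{\infty}$ with $\|f\|_{\mathcal{H}^{\infty}}\le M$.

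The hard part is to promote this to uniform convergence on every half-plane $\C_{\varepsilon}$, the difficulty being that $\C_{\varepsilon}$ is unbounded in the imaginary direction, so neither Montel nor a Cauchy-estimate equicontinuity argument closes the gap by itself. Fix $\varepsilon>0$ and put $h_k=f_{n_k}-f\in\mathcal{H}^{\infty}$, so that $\|h_k\|_{\mathcal{H}^{\infty}}\le 2M$, while its coefficients $b_n^{(k)}=a_n^{(n_k)}-a_n$ satisfy $|b_n^{(k)}|\le 2M$ and $b_n^{(k)}\to 0$ for each fixed $n$. Suppose, for a contradiction, that $\|h_k\|_{\mathcal{H}^{\infty}(\C_{\varepsilon})}\not\to 0$; after relabelling there are $\delta>0$ and points $s_k^{*}=\sigma_k+i\tau_k\in\C_{\varepsilon}$ with $|h_k(s_k^{*})|\ge\delta/2$. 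The elementary bound $|h_k(s)|\le|b_1^{(k)}|+2M\sum_{n\ge 2}n^{-\Re s}$ for $\Re s>1$ (whose first term governs the behaviour as $\Re s\to+\infty$, in the spirit of Lemma~\ref{lemaso}) shows that $|h_k(s_k^{*})|\to 0$ as soon as $\sigma_k\to+\infty$; hence the $\sigma_k$ stay bounded and, along a subsequence, $\sigma_k\to\sigma^{*}\in[\varepsilon,+\infty)$.

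Finally I would exploit the vertical translates $\tilde h_k(s):=h_k(s+i\tau_k)$, which again lie in $\mathcal{H}^{\infty}$ with norm $\le 2M$ and have coefficients $b_n^{(k)}n^{-i\tau_k}$ of unchanged moduli $|b_n^{(k)}|\to 0$. By Montel a subsequence of $\{\tilde h_k\}$ converges locally uniformly on $\C_{+}$ to some $\tilde h$, and since $\tilde h_k(\sigma_k)=h_k(s_k^{*})$ and $\sigma_k\to\sigma^{*}$ we get $|\tilde h(\sigma^{*})|\ge\delta/2>0$. On the other hand, for $s\in\C_{2}$ dominated convergence (with summable dominant $2M\,n^{-\Re s}$ and coefficients tending to $0$) gives $\tilde h(s)=\lim_k\sum_n b_n^{(k)}n^{-i\tau_k}n^{-s}=0$, so $\tilde h\equiv 0$ on $\C_{2}$ and hence on $\C_{+}$ by the identity principle — contradicting $|\tilde h(\sigma^{*})|>0$. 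Thus $\|f_{n_k}-f\|_{\mathcal{H}^{\infty}(\C_{\varepsilon})}\to 0$ for every $\varepsilon>0$ along the single subsequence, which is the claim. The step I expect to be most delicate is exactly this translation-plus-Montel reduction: it replaces the supremum over the unbounded region $\C_{\varepsilon}$ by one limiting point value, the only place where the decay of the coefficients can be brought to bear.
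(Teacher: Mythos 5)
Your proof is correct, and a comparison with ``the paper's proof'' is moot here: the paper states Theorem \ref{bayarto} as a quoted result of Bayart \cite{bayarto} and does not reproduce a proof, so your argument has to stand on its own --- and it does. The three stages all check out: (i) the coefficient bound $|a_n^{(j)}|\le M n^{\sigma}$ for every $\sigma>0$ via the mean-value formula (the same formula the paper invokes in Lemma \ref{hache} through \cite[Proposition 1.9]{peris}) is legitimate because $\sigma_u(f_j)\le 0$; the only cosmetic repair is that Theorem \ref{queffelecs} as stated requires the codomain to be a half-plane, so you should either note that $\{|w|\le M\}\subset\{\Re w>-M-1\}=\C_{-M-1}$ or simply quote $\sigma_b=\sigma_u$ from the chain of abscissa inequalities in Section 2; (ii) the diagonal extraction plus classical Montel correctly produces $f=g\in\mathcal{H}^{\infty}$ with $\|f\|_{\mathcal{H}^{\infty}}\le M$, and this step is genuinely needed later, since the translation argument requires $\|h_k\|_{\mathcal{H}^{\infty}}\le 2M$ on all of $\C_+$; (iii) the upgrade to uniform convergence on $\C_{\varepsilon}$ is sound: since $|b_n^{(k)}|\le 2M$ the series of $h_k$ converges absolutely on $\C_1$ and represents $h_k$ there (as $\sigma_u(h_k)\le 0$), which rules out $\sigma_k\to+\infty$; then Montel applied to $\tilde h_k(s)=h_k(s+i\tau_k)$, Tannery/dominated convergence forcing $\tilde h\equiv 0$ on $\C_2$, and the identity principle contradict $|\tilde h(\sigma^{*})|\ge\delta/2$ at $\sigma^{*}\ge\varepsilon>0$. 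The contradiction scheme is also logically arranged so that the \emph{single} subsequence obtained in (ii) works simultaneously for every $\varepsilon>0$, as the statement demands. The vertical-translation device is the right way to beat the unboundedness of $\C_{\varepsilon}$ in the imaginary direction (it is, implicitly, an almost-periodicity argument), and your proof has the virtue of staying entirely within half-plane function theory, using only facts the paper already quotes --- so it would serve as a self-contained substitute for the external citation.
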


\subsection{Composition operators}
In their seminal work \cite{gorhed}, Gordon and Hedenmalm focused their attention on composition operators in the Hilbert space $\mathcal{H}^2$. More precisely, given an analytic function $\Phi:\C_{1/2}\to\C_{1/2}$, the composition operator  is defined as $C_{\Phi}(f)=f\circ\Phi$, whenever $f$ is analytic in $\C_{1/2}$. 
As a matter of nomenclature, $\Phi$ is often said to be the \emph{symbol} of the composition operator $C_{\Phi}$. Now, what are the minimal requirements for a given analytic function $\Phi:\C_{1/2}\to\C_{1/2}$ to define a bounded composition operator in $\mathcal{H}^2$? Reciprocally, once we have the boundedness of such an operator, what properties does $\Phi$ satisfy? 
To answer this matter, in \cite{gorhed} the authors introduced the now so-called \emph{Gordon-Hedenmalm class}, denoted by $\mathcal{G}$, even though they did not use this name in their original paper. This class is one of the cornerstones of the present work. 

\begin{definition} Let $\Phi:\C_{+}\to\C_{+}$ be an analytic function.
\begin{enumerate}
\item We say that $\Phi$ belongs to the class $\mathcal{G}_{\infty}$ if there exist $c_\Phi\in\N\cup\{0\}$ and $\varphi\in\mathcal{D}$ such that
    \begin{equation}\label{chosenone}
        \Phi(s)=c_{\Phi}s+\varphi(s).
    \end{equation}
The value $c_{\Phi}$ is known as the {\sl characteristic} of the function $\Phi$. 
\item We say that $\Phi$ belongs to the {\sl Gordon-Hedenmalm class} $\mathcal{G}$ if $\Phi\in  \mathcal{G}_{\infty}$ and  $\Phi(\C_+)\subset \C_{1/2}$ in case $c_{\Phi}=0$.
\end{enumerate}
\end{definition}
\begin{remark} Assume $c_{\Phi}\in \N$ for a certain $\Phi \in \mathcal G_{\infty}$ like in \eqref{chosenone}. By Lemma \ref{lemaso}, it holds 
$$
c_{\Phi}=\lim_{\Re s\to +\infty} \frac{\Phi(s)}{s}.
$$
Thus, by Julia-Wolff-Carath\'eodory's Lemma (see \cite[Theorem 1.7.8]{manoloal}), $\Re \Phi(s)\geq c_{\Phi}\Re s$ for all $s\in \C_{+}$. Therefore, $\varphi$ sends $\C_{+}$ into $\overline {\C_{+}}$. In addition, by Theorem \ref{queffelecs}, we have that  $\sigma_u(\varphi)\leq0$. 
\end{remark}

The result Gordon and Hedenmalm proved reads as follows. The reader is addressed to \cite{gorhed} for a proof.
\begin{theorem}[Gordon-Hedenmalm]\label{gordon}
An analytic function $\Phi:\C_{\frac12}\to\C_{\frac12}$ defines a bounded composition operator $\mathcal{C}_{\Phi}:\mathcal{H}^2\to\mathcal{H}^2$ if and only if $\Phi$ has a holomorphic extension to $\C_{+}$ that  belongs to the class $\mathcal{G}$. 
\end{theorem}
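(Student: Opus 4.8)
The plan is to prove the two implications separately and, within the "if" direction, to treat the cases $c_{\Phi}\geq 1$ and $c_{\Phi}=0$ apart, since the underlying mechanisms are genuinely different. Throughout I would exploit that $\mathcal{H}^2$ is a reproducing kernel Hilbert space whose kernel at a point $w\in\C_{1/2}$ is $K_w(s)=\zeta(s+\overline{w})$; in particular, by the case $p=2$ of \eqref{normevaluationfunciotnal}, one has $\|K_w\|_{\mathcal{H}^2}^2=\zeta(2\Re w)$.

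\textbf{Necessity.} Assume $C_{\Phi}$ is bounded. The standard adjoint identity for composition operators on a reproducing kernel space gives $C_{\Phi}^{*}K_w=K_{\Phi(w)}$, whence
\[
\zeta\big(2\Re\Phi(w)\big)=\|K_{\Phi(w)}\|_{\mathcal{H}^2}^{2}\leq\|C_{\Phi}\|^{2}\,\|K_w\|_{\mathcal{H}^2}^{2}=\|C_{\Phi}\|^{2}\,\zeta(2\Re w),\qquad w\in\C_{1/2}.
\]
Since $\zeta$ decreases from $+\infty$ to $1$ on $(1,\infty)$, this inequality yields a quantitative lower bound for $\Re\Phi$ in terms of $\Re w$, and letting $\Re w\to+\infty$ keeps $\Re\Phi$ strictly above $1/2$; combined with the fact that, when $c_{\Phi}=0$, $\varphi$ is a genuine analytic function on all of $\C_{+}$ (Theorem \ref{queffelecs}) together with its vertical almost-periodicity, this is what will force the half-plane condition $\Phi(\C_{+})\subset\C_{1/2}$. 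To obtain the structural form \eqref{chosenone}, I would feed the monomials $n^{-s}\in\mathcal{H}^2$ through the operator: boundedness forces each $n^{-\Phi(s)}=C_{\Phi}(n^{-s})$ to be again a Dirichlet series. Applying this for $n=2,3,\dots$ and comparing the resulting expansions, one extracts that $\Phi(s)-c_{\Phi}s$ is itself a Dirichlet series for a suitable nonnegative integer $c_{\Phi}$; Lemma \ref{lemaso} identifies $c_{\Phi}$ via the asymptotics of $2^{-\Phi(s)}$ (equivalently $c_{\Phi}=\lim_{\Re s\to+\infty}\Phi(s)/s$), and Theorem \ref{queffelecs} gives $\sigma_u(\varphi)\leq 0$ for the remainder.

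\textbf{Sufficiency.} Here I would first reduce to Dirichlet polynomials $f$, which are dense in $\mathcal{H}^2$: granted a norm bound on polynomials, the boundedness of point evaluations \eqref{normevaluationfunciotnal} and Bayart's Montel theorem (Theorem \ref{bayarto}) allow one to pass to the limit and identify the limit with $f\circ\Phi$, thereby closing the graph. For a polynomial $g$ I would compute through the mean-value representation
\[
\|g\|_{\mathcal{H}^2}^{2}=\lim_{\sigma\to 0^{+}}\ \lim_{T\to\infty}\frac{1}{2T}\int_{-T}^{T}|g(\sigma+it)|^{2}\,dt.
\]
When $c_{\Phi}\geq 1$, the Julia--Wolff--Carath\'eodory bound $\Re\Phi(s)\geq c_{\Phi}\Re s\geq\Re s$ recorded in the remark above shows that $\Phi$ maps each vertical line $\Re s=\sigma$ into $\overline{\C_{\sigma}}$. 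Combining the resulting Littlewood-type subordination with the vertical almost-periodicity of $\varphi$ (so that the time averages above converge to the correct mean) and the elementary isometry $f(s)\mapsto f(c_{\Phi}s)$, one obtains $\|f\circ\Phi\|_{\mathcal{H}^2}\leq\|f\|_{\mathcal{H}^2}$.

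\textbf{The main difficulty.} The genuinely hard case is sufficiency when $c_{\Phi}=0$: now $\Phi=\varphi$ is a Dirichlet series mapping $\C_{+}$ into $\C_{1/2}$, there is no gain in real part, and the subordination argument collapses. One must instead establish a Carleson-type embedding estimate bounding $\|f\circ\varphi\|_{\mathcal{H}^2}$ by a constant times $\|f\|_{\mathcal{H}^2}$, exploiting precisely that the image lands in $\C_{1/2}$, the half-plane on which point evaluations and, by Lemma \ref{pointwiseevaluaction}, their derivatives are uniformly controlled. This is the step where the infinite-dimensional Bohr-lift geometry of $\mathcal{H}^2$ enters essentially, and it is the part I expect to require the most work.
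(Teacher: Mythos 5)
The paper does not actually prove this theorem: it is quoted from Gordon--Hedenmalm with the reader referred to \cite{gorhed}, so your attempt can only be judged on its own merits. As a plan it reproduces the right skeleton of the original argument (the adjoint identity on reproducing kernels, feeding the monomials $n^{-s}$ through $C_{\Phi}$, Carlson-type vertical means together with the Bohr lift for $c_{\Phi}\geq 1$), but there is a genuine gap in your necessity direction: the holomorphic continuation of $\Phi$ from $\C_{1/2}$ to $\C_{+}$, together with the extraction of the form $\Phi(s)=c_{\Phi}s+\varphi(s)$ with $\varphi\in\mathcal{D}$, is precisely the content of \cite[Theorem A]{gorhed}, and it does not follow from the tools you invoke. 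Theorem \ref{queffelecs}, applied to $\varphi\colon\C_{1/2}\to\C_{1/2}$, yields only $\sigma_u(\varphi)\leq 1/2$, i.e.\ nothing to the left of $\Re s=1/2$; and vertical almost-periodicity cannot force continuation either: there exist Dirichlet series $g$ bounded on $\C_{1/2}$ whose line $\Re s=1/2$ is a natural boundary, and then $\Phi=1+\delta g$ (with $\delta$ small) maps $\C_{1/2}$ into $\C_{1/2}$ yet extends nowhere, so the mapping property alone is insufficient. The continuation must come from the operator bound used \emph{uniformly over powers}: $\|2^{-k\Phi}\|_{\mathcal{H}^2}=\|C_{\Phi}(2^{-ks})\|_{\mathcal{H}^2}\leq\|C_{\Phi}\|$ for all $k\in\N$, which via the Bohr lift forces $2^{-\Phi}\in\mathcal{H}^{\infty}$ with norm at most one, hence $\sigma_u(2^{-\Phi})\leq 0$ and analytic continuation of $\Phi$ to $\C_{+}$ (the extension of $2^{-\Phi}$ being zero-free, a branch of $-\log_2$ applies). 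Your phrase ``comparing the resulting expansions, one extracts'' also hides the consistency argument: Lemma \ref{lemaso} identifies the leading term $b_{m_n}m_n^{-s}$ of each $n^{-\Phi}$, and one must show that $\log m_n/\log n$ is one and the same nonnegative integer for all $n$, and moreover that $\varphi$ itself --- not merely each $n^{-\varphi}$ --- is a Dirichlet series.

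The second gap is that the implication you yourself flag as hardest, sufficiency when $c_{\Phi}=0$, is left entirely open: naming a ``Carleson-type embedding estimate'' is not supplying one, and the tool you point to cannot close it, since the bounds of Lemma \ref{pointwiseevaluaction} are uniform only on $\C_{1/2+\sigma}$ and blow up as $\sigma\to 0^{+}$, while $\varphi(\C_{+})\subset\C_{1/2}$ allows the image to approach the critical line. Two smaller repairs: Theorem \ref{bayarto} concerns bounded sequences in $\mathcal{H}^{\infty}$, so in your density argument for general $f\in\mathcal{H}^2$ you should instead use weak compactness of bounded sets in the Hilbert space $\mathcal{H}^2$ together with the bounded point evaluations \eqref{normevaluationfunciotnal} on $\C_{1/2}$ (exactly as the paper does in the proof of Theorem \ref{gordo1}); and your kernel inequality on $\C_{1/2}$ by itself cannot give $\Phi(\C_{+})\subset\C_{1/2}$ in the $c_{\Phi}=0$ case --- one needs the extension first, and then an argument on the extended domain.
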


This characterization is also valid for the spaces $\mathcal H^{p}$ whenever $p\in 2\N$.  
As far as we know, the characterization of the boundedness of composition operators in the other Hardy spaces of Dirichlet series remains open. We use the following  result from Bayart's work \cite{bayarto} (see also \cite[Theorem 8.10.11]{queffelecs}).
\begin{theorem}[Bayart]\label{hp}
Let $\Phi:\C_{1/2}\to\C_{1/2}$ be analytic and $1\leq p<\infty$.
\begin{enumerate}[(a)]
    \item If $C_{\Phi}:\mathcal{H}^p\to\mathcal{H}^p$ is bounded, then  $\Phi\in\mathcal{G}$.
    \item If $\Phi\in\mathcal{G}$ and  $c_{\Phi}\geq1$, then $C_{\Phi}:\mathcal{H}^p\to\mathcal{H}^p$ is bounded.
\end{enumerate}
\end{theorem}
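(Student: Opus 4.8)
The statement is Bayart's; here is the route I would take with the tools already in the excerpt.

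\emph{Necessity \textup{(a)}.} Every element of $\mathcal{H}^p$ is a Dirichlet series convergent on $\C_{1/2}$, so boundedness of $C_\Phi$ forces each $C_\Phi(p_j^{-s})=p_j^{-\Phi(s)}$, $p_j$ prime, to be a (nowhere vanishing) Dirichlet series. Reading off its leading term via Lemma~\ref{lemaso} gives $\lim_{\Re s\to+\infty}\Phi(s)/s=:c$ with $p_j^{\,c}\in\N$ for every prime $p_j$; this is only possible if $c\in\N\cup\{0\}$, and it is the characteristic $c_\Phi$. Subtracting, $\Phi(s)-c_\Phi s$ is again a Dirichlet series, so $\Phi\in\mathcal{G}_\infty$. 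When $c_\Phi=0$ one must still check $\Phi(\C_+)\subset\C_{1/2}$: the adjoint identity $C_\Phi^{*}\delta_s=\delta_{\Phi(s)}$ together with \eqref{normevaluationfunciotnal} yields $\zeta(2\Re\Phi(s))\le\|C_\Phi\|^{p}\zeta(2\Re s)$ for $s\in\C_{1/2}$, hence $\Re\Phi>1/2$ there; combined with the boundedness of $\varphi$ on every $\C_\varepsilon$ (Theorem~\ref{queffelecs}) and the minimum principle for the harmonic function $\Re\varphi$, this propagates $\Re\Phi>1/2$ to all of $\C_+$, as in Gordon--Hedenmalm. Thus $\Phi\in\mathcal{G}$.

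\emph{Sufficiency \textup{(b)}, $c_\Phi\ge1$.} I would first record two reductions. The pure dilation $D_c(s)=cs$, $c\in\N$, acts on the Bohr lift as the substitution $z_j\mapsto z_j^{c}$ on $\T^{\infty}$, which preserves $m_\infty$; hence $C_{D_c}$ is an isometry of $\mathcal{H}^p$. And for $p=2k\in2\N$ the identities $\|G\|_{2k}^{2k}=\|G^{k}\|_{2}^{2}$ and $C_\Phi(F^{k})=(C_\Phi F)^{k}$ reduce the problem to $p=2$, i.e.\ to Gordon--Hedenmalm (Theorem~\ref{gordon}); complex interpolation of the scale $\{\mathcal{H}^p\}$ then settles every $p\in[2,\infty)$.

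The decisive range is therefore $p\in[1,2)$, which I would treat directly on Dirichlet polynomials $F$ (dense in $\mathcal{H}^p$) through the vertical mean-value description of the norm,
\begin{equation*}
\|F\circ\Phi\|_{\mathcal{H}^p}^{p}=\lim_{\sigma\to0^+}\;\lim_{T\to\infty}\frac{1}{2T}\int_{-T}^{T}\bigl|F\bigl(\Phi(\sigma+it)\bigr)\bigr|^{p}\,dt .
\end{equation*}
Writing $\Phi=c_\Phi s+\varphi$ with $\Re\varphi\ge0$, one has $\Re\Phi(\sigma+it)\ge c_\Phi\sigma\ge\sigma$, so $\Phi$ carries the line $\Re s=\sigma$ into the half-plane $\C_{c_\Phi\sigma}$. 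A Littlewood-type subordination estimate, comparing the pushforward under $\Phi$ of the vertical average with the vertical average of the subharmonic function $|F|^{p}$ on $\C_{c_\Phi\sigma}$, then bounds the right-hand side by $\|F\|_{\mathcal{H}^p}^{p}$ up to a constant, and density finishes the proof.

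\emph{Main obstacle.} The heart of the matter is this last subordination step for $p<2$, and it is exactly where $c_\Phi\ge1$ is indispensable: the vertical gap $\Re\Phi\ge c_\Phi\Re s$ is what keeps the mass of the comparison measure under control. Neither the power identity nor interpolation from the exponents $2k$ reaches $p<2$, so the estimate must be established by hand; and when $c_\Phi=0$ this gap collapses, which is why the boundedness of $C_\Phi$ on $\mathcal{H}^p$ for $p\notin2\N$ is, as noted in the introduction, still open.
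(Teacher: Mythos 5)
First, context: the paper does not prove this theorem at all --- it is quoted from Bayart \cite{bayarto} (see also \cite[Theorem 8.10.11]{queffelecs}) --- so your proposal has to be measured against Bayart's argument rather than anything in the text. Your part (a) follows the correct Gordon--Hedenmalm scheme: testing $C_{\Phi}$ on the functions $p_j^{-s}$, extracting the characteristic via Lemma~\ref{lemaso}, and using the point-evaluation norms \eqref{normevaluationfunciotnal} to force $\Re\Phi>1/2$ when $c_{\Phi}=0$ is exactly the standard device. But two steps are compressed past the point of proof. From $p_j^{\,c}\in\N$ for \emph{all} primes one should pass to $n^{c}\in\N$ for all $n\in\N$ by multiplicativity and then invoke the classical finite-difference argument to conclude $c\in\N\cup\{0\}$; the quantifier matters, since with only two primes this statement is essentially the open four exponentials conjecture. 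More seriously, ``subtracting'' to conclude that $\Phi-c_{\Phi}s$ is a Dirichlet series is not a subtraction: what you actually know is that each $p_j^{-\Phi}$ is a Dirichlet series, and recovering $\varphi$ itself (taking a logarithm of the nonvanishing series $2^{-\varphi}$ in a suitable algebra of generalized Dirichlet series and identifying the frequencies as $\log\N$) is the technical heart of \cite[Theorem A]{gorhed}; likewise the analytic extension of $\Phi$ to all of $\C_{+}$ requires the almost-periodicity/vertical-limit machinery, not just a minimum principle for $\Re\varphi$.

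The genuine gap is in (b). The interpolation leg is unsupported: it is not known that the complex interpolation space $[\mathcal{H}^{p_0},\mathcal{H}^{p_1}]_{\theta}$ equals $\mathcal{H}^{p}$, and the usual retraction route fails because the analytic (Riesz) projection is unbounded on $L^p(\T^{\infty})$ for $p\neq2$ --- its norm on $\T^{d}$ grows like $(\sin(\pi/p))^{-d}$ --- so ``complex interpolation settles every $p\in[2,\infty)$'' cannot be asserted. And, as you half-concede, the Littlewood-type subordination step for $p<2$ --- which \emph{is} the theorem --- is only named, not proved: the pushforward comparison is false for a general self-map of $\C_{1/2}$ and must exploit the Dirichlet (almost-periodic) structure of $\varphi$. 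Bayart's actual proof treats all $1\leq p<\infty$ uniformly, with no case split: reduce to Dirichlet polynomials; split off the dilation via the measure-preserving substitution $z_j\mapsto z_j^{c_{\Phi}}$ on $\T^{\infty}$ (your isometry observation is correct and is indeed the first step); pass to vertical limits through the Bohr lift, so that for almost every character $\chi\in\T^{\infty}$ the symbol becomes a self-map $\Psi_{\chi}$ of a half-plane satisfying $\Re\Psi_{\chi}(s)\geq c_{\Phi}\Re s$; apply half-plane harmonic-majorant (Littlewood) subordination there, valid for every $p>0$ since $|F|^{p}$ is subharmonic; and integrate over $\T^{\infty}$ by Fubini. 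This yields the contraction $\|C_{\Phi}\|\leq1$ recorded in Remark \ref{norm}, with no constant loss. If you complete your $p<2$ argument with this vertical-limit mechanism you will find it covers all $p$ at once, and the interpolation detour should simply be dropped.
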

\begin{remark}\label{norm}
From the proof of the sufficiency for the non-zero characteristic case in \cite[Theorem B]{gorhed}, it follows that $\|C_{\Phi}\|_{\mathcal H^{2}}\leq1$. That is, the composition operator $C_{\Phi}$ is a contraction on $\mathcal H^{2}$ whenever the characteristic $c_{\Phi}$ is a natural number. In fact, this was extended to the other values of $p\in [1,\infty)$ by Bayart (see \cite[Theorem 8.10.1]{queffelecs}). This observation will play a key role in subsequent sections.
\end{remark}

The characterization of bounded composition operators on $\mathcal{H}^\infty$  was obtained by Bayart in \cite{bayarto} (see also \cite[Proposition 2]{bayart-castillo}). It shows that there are more bounded composition operators on  $\mathcal{H}^\infty$ than on $\mathcal{H}^2$:
\begin{theorem}\label{boundedness-Hinfty}
A function $\Phi:\C_{+}\to\C_{+}$ defines a bounded composition operator $\mathcal{C}_{\Phi}:\mathcal{H}^\infty\to\mathcal{H}^\infty$ if and only if $\Phi$ belongs to the class $\mathcal G_{\infty}$.
\end{theorem}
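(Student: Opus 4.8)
The plan is to prove the two implications separately: the sufficiency via a \emph{bounded} polynomial approximation combined with Bayart's normal families theorem, and the necessity by testing $C_\Phi$ on the Dirichlet monomials $n^{-w}$.

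\emph{Sufficiency.} Assume $\Phi\in\mathcal{G}_\infty$, so $\Phi(s)=c_\Phi s+\varphi(s)$ with $c_\Phi\in\N\cup\{0\}$ and $\varphi\in\mathcal{D}$; write $\varphi=a_1+\varphi_0$ with $\varphi_0(s)\to0$ as $\Re s\to+\infty$. The first observation is that each monomial is sent to a Dirichlet series: since $n^{-\Phi(s)}=(n^{c_\Phi})^{-s}\,n^{-a_1}\,e^{-(\log n)\varphi_0(s)}$ and the exponential of a Dirichlet series with vanishing constant term is again a Dirichlet series (expand $e^{w}=\sum_k w^k/k!$ and use that $\mathcal{D}$ is an algebra, each coefficient being a finite sum), $n^{-\Phi}$ is a Dirichlet series; as $\Re\Phi>0$ it is bounded by $1$ on $\C_+$, so $n^{-\Phi}\in\mathcal{H}^\infty$. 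Hence every Dirichlet polynomial $Q$ satisfies $Q\circ\Phi\in\mathcal{H}^\infty$ with $\|Q\circ\Phi\|_{\mathcal{H}^\infty}\le\|Q\|_{\mathcal{H}^\infty}$, because $\Phi(\C_+)\subset\C_+$.

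Now take $f\in\mathcal{H}^\infty$. By Bohr--Fej\'er summation there is a sequence of Dirichlet polynomials $\{Q_N\}$ with $\|Q_N\|_{\mathcal{H}^\infty}\le\|f\|_{\mathcal{H}^\infty}$ and $Q_N\to f$ uniformly on compact subsets of $\C_+$ (see \cite{queffelecs}). Then $\{Q_N\circ\Phi\}$ is bounded in $\mathcal{H}^\infty$, so by Theorem \ref{bayarto} a subsequence converges uniformly on each $\C_\varepsilon$ to some $G\in\mathcal{H}^\infty$. Since $\Phi$ carries compact subsets of $\C_+$ to compact subsets of $\C_+$, we also have $Q_N\circ\Phi\to f\circ\Phi$ uniformly on compacta, whence $G=f\circ\Phi$. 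Thus $f\circ\Phi\in\mathcal{H}^\infty$ and $\|C_\Phi f\|_{\mathcal{H}^\infty}\le\|f\|_{\mathcal{H}^\infty}$, so $C_\Phi$ is bounded (indeed a contraction).

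\emph{Necessity.} Assume $C_\Phi:\mathcal{H}^\infty\to\mathcal{H}^\infty$ is bounded. As $n^{-w}\in\mathcal{H}^\infty$ for every $n$, each image $n^{-\Phi}$ lies in $\mathcal{H}^\infty\subset\mathcal{D}$. Writing $2^{-\Phi}=\sum_n\beta_n n^{-s}$ with first non-zero coefficient at index $N$, Lemma \ref{lemaso} gives $N^{s}2^{-\Phi(s)}\to\beta_N$, so $\Phi(s)-(\log_2 N)s$ has a finite limit as $\Re s\to+\infty$. Repeating with $3^{-\Phi}=\sum_n\gamma_n n^{-s}$ (first index $M$) yields $\lim_{\Re s\to\infty}\Phi(s)/s=\log_2 N=\log_3 M$, so $N=2^{c}$, $M=3^{c}$ with $c=\log_2 N$; since $N,M\in\N$, an elementary argument forces $c=:c_\Phi\in\N\cup\{0\}$. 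Hence $\Phi(s)=c_\Phi s+\varphi(s)$, and taking logarithms of the non-vanishing bounded function $2^{-\varphi}=2^{c_\Phi s}2^{-\Phi}$ (which tends to $\beta_N\neq0$) exhibits $\varphi$ as a generalized Dirichlet series with non-negative frequencies. It remains to see these frequencies are logarithms of integers, giving $\varphi\in\mathcal{D}$ and $\Phi\in\mathcal{G}_\infty$. This is exactly the \emph{form} part of the Gordon--Hedenmalm necessity argument (Theorem \ref{gordon} and \cite{gorhed}): the only step there requiring the Hilbert space structure produces the extra condition $\Phi(\C_+)\subset\C_{1/2}$ when $c_\Phi=0$, which is precisely what distinguishes $\mathcal{G}$ from $\mathcal{G}_\infty$ and is not needed here.

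The main obstacle is this final step of the necessity. A self-contained argument uses that $k^{-\Phi}$ is an \emph{ordinary} Dirichlet series for every $k\in\N$: comparing the least non-trivial frequency of $k^{-\varphi}=e^{-(\log k)\varphi}$ as $k$ ranges over $2,3,\dots$ shows $e^{\mu}k^{c_\Phi}\in\N$ for all $k$, where $\mu$ is the smallest frequency of $\varphi$; an elementary divisibility argument then forces $e^{\mu}\in\N$, and an induction on the (discrete) frequency set propagates this to all frequencies. On the sufficiency side, the delicate point is that ordinary partial sums of $f$ need not be uniformly bounded in $\mathcal{H}^\infty$, so the Bohr--Fej\'er (Cesàro) means are essential in order to apply Theorem \ref{bayarto}.
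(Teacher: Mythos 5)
Your overall architecture is sound and matches the route in the literature that the paper relies on (the paper itself gives no proof of this theorem; it quotes it from Bayart \cite{bayarto}, see also \cite{bayart-castillo}). The sufficiency half is correct and is essentially Bayart's argument: the fact that $n^{-\Phi}\in\mathcal{D}$ is the paper's Theorem \ref{menudeo} with $\Lambda=\N$, the contraction bound for Dirichlet polynomials follows from $\Phi(\C_+)\subset\C_+$, and the passage to general $f\in\mathcal{H}^\infty$ via norm-bounded Bohr--Fej\'er means together with Theorem \ref{bayarto} is exactly the right mechanism; your closing remark that the Ces\`aro means are needed because ordinary partial sums are not uniformly bounded is also well taken.

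There is, however, one concrete misstep in the necessity. From $2^{c}=N\in\N$ and $3^{c}=M\in\N$ you assert that ``an elementary argument forces $c\in\N\cup\{0\}$.'' No such argument is known: whether $2^{c},3^{c}\in\Z$ forces $c$ to be an integer (or even rational) is a well-known open problem, an instance of the four exponentials conjecture; the proved six exponentials theorem requires a third multiplicatively independent base, i.e.\ $2^{c},3^{c},5^{c}\in\N$, to conclude $c\in\Q$ (after which $2^{p/q}=N$, so $2^{p}=N^{q}$, does give $c\in\N\cup\{0\}$ elementarily). Fortunately the repair lies inside your own setup: boundedness of $C_\Phi$ lets you test \emph{every} monomial $k^{-w}$, and the same leading-frequency computation applied to $k^{-\Phi}\in\mathcal{D}$ gives $k^{c}\in\N$ for every integer $k\ge 2$; the classical finite-difference argument (for $n>c$, the $n$-th difference $\Delta^{n}$ of $k\mapsto k^{c}$ is integer-valued and tends to $0$ as $k\to\infty$, hence vanishes eventually, forcing $k^{c}$ to agree with a polynomial and $c\in\N\cup\{0\}$) then closes the step elementarily --- this is in fact how Gordon and Hedenmalm argue in \cite{gorhed}. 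With that replacement, the rest of your necessity sketch goes through: the passage from $N^{s}2^{-\Phi(s)}\to\beta_N\neq0$ to convergence of $\Phi(s)-cs$ does need a fixed branch of the logarithm, which the connectedness of a remote half-plane supplies; and your frequency analysis --- $e^{\mu}k^{c_\Phi}\in\N$ for all $k$, the divisibility argument forcing $e^{\mu}\in\N$, and the induction along the discrete frequency set --- is correct and reproduces the Gordon--Hedenmalm scheme without invoking the $\C_{1/2}$ condition, which, as you rightly observe, is the only part of their proof tied to the Hilbert space and is exactly what separates $\mathcal{G}$ from $\mathcal{G}_\infty$.
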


Next result confirms the well behaviour of the class $\mathcal{D}_{\Lambda}$ under the action of $C_{\Phi}$, where $\Phi(s)=c_{\Phi}s+\varphi(s)$ with $\varphi\in\mathcal{D}_{\Lambda}$ and $c_{\Phi}\in\N\cup\{0\}$. Our proof is a slight modification of the  proof of \cite[Theorem A]{gorhed} once we know that $\mathcal{D}_{\Lambda}$ is an algebra.

\begin{theorem}\label{menudeo}
Let $\theta,\nu\in\R$. Consider $\Phi:\C_{\theta}\to\C_{\nu}$ an analytic map such that it can be written as
\begin{equation*}
    \Phi(s)=c_{\Phi}s+\varphi(s),
\end{equation*}
where $c_{\Phi}\in\N\cup\{0\}$ and $\varphi\in\mathcal{D}_{\Lambda}$. Then, $\Phi$ generates a composition operator $\mathcal{C}_{\Phi}:\mathcal{D}_{\Lambda}\to\mathcal{D}_{\Lambda}$, that is, $f\circ \Phi\in \mathcal{D}_{\Lambda}$ for all $f\in \mathcal{D}_{\Lambda}$.
\end{theorem}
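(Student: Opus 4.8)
The plan is to follow the proof of \cite[Theorem A]{gorhed}, keeping track at each step that the Dirichlet coefficients stay supported on $\Lambda$; the only genuinely new ingredient is the algebra structure of $\mathcal{D}_{\Lambda}$ established above, combined with the multiplicativity of $\Lambda$. Write $f(s)=\sum_{n=1}^{\infty}b_n n^{-s}\in\mathcal{D}_{\Lambda}$, so that $b_n=0$ for $n\notin\Lambda$, and observe that formally
\[
f(\Phi(s))=\sum_{n=1}^{\infty}b_n\,n^{-\Phi(s)}=\sum_{n=1}^{\infty}b_n\,\bigl(n^{c_{\Phi}}\bigr)^{-s}\,n^{-\varphi(s)}.
\]
Thus the first task is to show that each individual symbol $n^{-\Phi(s)}$ with $n\in\Lambda$ already lies in $\mathcal{D}_{\Lambda}$.

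To this end, split $\varphi(s)=a_1+\psi(s)$ into its constant term and its non-constant part $\psi(s)=\sum_{k\geq 2}a_k k^{-s}$; since $1\in\Lambda$ this does not affect supports, so $\psi\in\mathcal{D}_{\Lambda}$, and by Lemma \ref{lemaso} we have $\psi(s)\to 0$ as $\Re s\to+\infty$. Then, for $\Re s$ large,
\[
n^{-\varphi(s)}=n^{-a_1}\exp\bigl(-(\log n)\,\psi(s)\bigr)=n^{-a_1}\sum_{j=0}^{\infty}\frac{(-\log n)^{j}}{j!}\,\psi(s)^{j}.
\]
Because $\mathcal{D}_{\Lambda}$ is an algebra, each power $\psi^{j}\in\mathcal{D}_{\Lambda}$; moreover, if $k_0\geq 2$ is the smallest frequency appearing in $\psi$, then $\psi^{j}$ only involves frequencies $\geq k_0^{\,j}$, so the coefficient of every $m^{-s}$ in the expansion above is a \emph{finite} sum of $\Lambda$-supported coefficients. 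Hence $n^{-\varphi(s)}$ is an honest convergent Dirichlet series lying in $\mathcal{D}_{\Lambda}$. Finally, the multiplicativity of $\Lambda$ gives $n^{c_{\Phi}}\in\Lambda$ (also when $c_{\Phi}=0$, since $1\in\Lambda$), so $\bigl(n^{c_{\Phi}}\bigr)^{-s}\in\mathcal{D}_{\Lambda}$, and one further application of the algebra property yields $n^{-\Phi(s)}\in\mathcal{D}_{\Lambda}$.

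It then remains to sum over $n$. Here I would invoke the convergence estimates from the proof of \cite[Theorem A]{gorhed} unchanged: they guarantee that $\sum_{n}b_n n^{-\Phi(s)}$ converges to $f\circ\Phi$ as a Dirichlet series in a suitable half-plane, and express each of its coefficients as a convergent sum of contributions coming solely from the expansions of the $n^{-\Phi(s)}$. Since every such contribution is indexed by an element of $\Lambda$, all coefficients of $f\circ\Phi$ outside $\Lambda$ vanish, i.e.\ $f\circ\Phi\in\mathcal{D}_{\Lambda}$. I expect the only real work to be this convergence and rearrangement bookkeeping — confirming that the double expansion over $n$ and over the exponential series sums to a genuine convergent Dirichlet series — but this is precisely the Gordon--Hedenmalm argument and needs no modification; the $\Lambda$-support condition is carried along automatically once the algebra property of $\mathcal{D}_{\Lambda}$ is in hand.
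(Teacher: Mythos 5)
Your proposal is correct and follows the same skeleton as the paper's proof: reduce to showing $n^{-\Phi}\in\mathcal{D}_{\Lambda}$ for $n\in\Lambda$ (using multiplicativity to get $n^{c_{\Phi}}\in\Lambda$, exactly as the paper does), then sum over $n$, deferring the convergence estimates to Gordon--Hedenmalm --- which is consistent with the paper's own remark that its argument is ``a slight modification'' of \cite[Theorem A]{gorhed}. Where you genuinely diverge is in the mechanism for the central step $n^{-\varphi}\in\mathcal{D}_{\Lambda}$. The paper notes that $\varphi\in\mathcal{D}_{\Lambda}$ is bounded on some half-plane, i.e.\ $\varphi\in\mathcal{H}^{\infty}_{\Lambda}(\C_{\sigma})$, and that $\mathcal{H}^{\infty}_{\Lambda}(\C_{\sigma})$ is a closed unital Banach subalgebra of $\mathcal{H}^{\infty}(\C_{\sigma})$; hence $n^{-\varphi}=\exp(-\log n\,\varphi)$ lies in it simply because the exponential series converges in norm and the subalgebra is closed. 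This buys two things at once: a sup-norm bound on $n^{-\Phi}$ on a half-plane that feeds directly into the final estimate $\sup_{s\in\C_{\sigma_1}}|a_n n^{-c_{\Phi}s}n^{-\varphi(s)}|\leq C|a_n|n^{-c_{\Phi}\sigma_1}$, and the fact that the series over $n$ then converges in the \emph{closed} space $\mathcal{H}^{\infty}_{\Lambda}(\C_{\sigma_1})$, so the $\Lambda$-support of the limit is automatic, with no coefficient bookkeeping at all. Your route --- splitting off $a_1$, expanding $\exp\bigl(-(\log n)\psi\bigr)$, and using that $\psi^{j}$ has least frequency $k_0^{\,j}$ --- is more elementary and makes the support condition visible coefficient by coefficient, but be aware of one small lacuna: finiteness of each formal coefficient does not by itself make the rearranged series an ``honest convergent Dirichlet series'' (a formal series with well-defined coefficients may converge nowhere). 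You also need the absolute-convergence estimate: on a half-plane where $\sum_{k\geq k_0}|a_k|k^{-\sigma}<\infty$, the double sum is dominated by $\sum_{j}\frac{(\log n)^{j}}{j!}\bigl(\sum_{k}|a_k|k^{-\sigma}\bigr)^{j}=n^{\sum_{k}|a_k|k^{-\sigma}}<\infty$, so Fubini legitimizes the rearrangement and identifies the sum with $n^{-\varphi}$. The same absolute-convergence point is what justifies extracting the coefficients of $\sum_{n}b_n n^{-\Phi}$ termwise in your last step; the paper sidesteps both issues simultaneously by working with norm limits in the closed subspace.
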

\begin{proof}
Let $f(s)=\sum_{n\geq1}a_nn^{-s}\in\mathcal{D}_{\Lambda}$. Then,
\begin{align*}
f\circ\Phi(s)  =  \sum_{n=1}^{\infty}a_nn^{-\Phi(s)}=\sum_{n=1}^{\infty}a_nn^{-c_{\Phi}s-\varphi(s)}.
\end{align*}
Since $\varphi\in\mathcal{D}_{\Lambda}$, there exists $\sigma>0$ such that $\varphi\in\mathcal{H}_{\Lambda }^{\infty}(\C_{\sigma})$. Now, as $\mathcal{H}_{\Lambda}^{\infty}(\C_{\sigma})$ is a Banach algebra, we have that
\[
n^{-\varphi(s)}=\exp(-\log n\, \varphi(s))\in \mathcal{H}^{\infty}_{\Lambda}(\C_{\sigma}).
\]
On the other hand, for $n\in\Lambda$,  clearly, $n^{-c_{\Phi}s}\in\mathcal{D}_{\Lambda}$. Thus $n^{-c_{\Phi}s}\in\mathcal{H}^{\infty}_{\Lambda}(\C_{\sigma})$.
Putting all together, we conclude that
\[
n^{-c_{\Phi}s}n^{-\varphi(s)}\in\mathcal{H}^{\infty}_{\Lambda}(\C_{\sigma}).
\]
Taking this into account, for $\sigma_1>\sigma$ big enough so that $f$ converges absolutely in $\C_{\sigma_1}$, 
\begin{align*}
    \sup_{s\in\C_{\sigma_1}}|a_nn^{-c_{\Phi}s}n^{-\varphi(s)}|\leq 
    C|a_n|n^{-c_{\Phi}\sigma_1}.
    \end{align*}
This proves that the composition $f\circ\Phi$ lies in $\mathcal{H}^{\infty}_{\Lambda}(\C_{\sigma_{1}})$ and so in $\mathcal D_{\Lambda}.  $
\end{proof}

The classes $\mathcal G$ and $\mathcal G_{\infty}$ are stable under composition and the characteristic of the composition is the product of the characteristics. This fact is relevant when dealing with semigroups of functions in these classes. 

\begin{proposition}\label{colorario}
Let $\Phi,\Psi\in\mathcal{G}_{\infty}$ (resp. $\Phi,\Psi\in\mathcal{G}$). Then, $\Phi\circ\Psi\in\mathcal{G}_{\infty}$ (resp. $\Phi\circ\Psi\in\mathcal{G}$). Moreover, $c_{\Phi\circ\Psi}=c_{\Phi}c_{\Psi}$.
\end{proposition}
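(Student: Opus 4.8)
The plan is to verify the two defining conditions of $\mathcal{G}_\infty$ for the composition $\Phi\circ\Psi$, namely that it admits a representation as in \eqref{chosenone} with natural-number characteristic, and then handle the extra half-plane condition needed for $\mathcal{G}$. Write $\Phi(s)=c_\Phi s+\varphi(s)$ and $\Psi(s)=c_\Psi s+\psi(s)$ with $\varphi,\psi\in\mathcal{D}$. First I would compute formally
\begin{equation*}
\Phi\circ\Psi(s)=c_\Phi\Psi(s)+\varphi(\Psi(s))=c_\Phi c_\Psi s + c_\Phi\psi(s)+\varphi(\Psi(s)),
\end{equation*}
so the candidate characteristic is $c_\Phi c_\Psi\in\N\cup\{0\}$, as claimed, and the candidate Dirichlet part is $c_\Phi\psi(s)+\varphi(\Psi(s))$. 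Since $c_\Phi\psi\in\mathcal{D}$ trivially, the crux is to show that $\varphi\circ\Psi$ is again a Dirichlet series. This is exactly the content of Theorem \ref{menudeo} (applied with $\Lambda=\N$): since $\Psi\in\mathcal{G}_\infty$ maps a suitable half-plane $\C_\theta$ into some $\C_\nu$ and $\varphi\in\mathcal{D}=\mathcal{D}_\N$, the composition $\varphi\circ\Psi$ lies in $\mathcal{D}$. Hence $\Phi\circ\Psi\in\mathcal{G}_\infty$ with the asserted multiplicativity of the characteristic.

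For the $\mathcal{G}$ statement, I would argue according to whether $c_{\Phi\circ\Psi}=c_\Phi c_\Psi$ vanishes. If both $c_\Phi\geq 1$ and $c_\Psi\geq 1$, then the product is $\geq 1$ and there is nothing further to check. The remaining case is when at least one characteristic is $0$; say $c_\Phi=0$ (the case $c_\Psi=0$ being analogous). Because $\Phi\in\mathcal{G}$ and $c_\Phi=0$, we have $\Phi(\C_+)\subset\C_{1/2}$, and in particular $\Phi(\C_+)\subset\C_+$; composing, $\Phi\circ\Psi(\C_+)\subset\Phi(\C_+)\subset\C_{1/2}$, which is precisely the required condition for membership in $\mathcal{G}$ when the characteristic is zero. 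If instead $c_\Phi\geq 1$ but $c_\Psi=0$, then $\Psi\in\mathcal{G}$ forces $\Psi(\C_+)\subset\C_{1/2}\subset\C_+$, so $\Phi\circ\Psi$ is well defined on $\C_+$; one then uses that $c_{\Phi\circ\Psi}=0$ in this subcase and verifies $\Phi\circ\Psi(\C_+)\subset\C_{1/2}$ directly from the range of $\Phi$ (noting $\Phi$ maps $\C_+$ into $\C_+$, and combining with the Julia--Wolff--Carath\'eodory estimate $\Re\Phi(s)\geq c_\Phi\Re s$ to locate the image appropriately).

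The one genuine subtlety, and what I expect to be the main obstacle, is a domain/analyticity bookkeeping issue: the raw formula for $\Phi\circ\Psi$ is only meaningful where $\Psi$ takes values in the domain of $\varphi$, so I must make sure the Dirichlet series $\varphi$ converges on a half-plane containing $\Psi(\C_+)$. This is handled cleanly by invoking Theorem \ref{queffelecs} and the Remark following the definition of $\mathcal{G}_\infty$: there $\sigma_u(\varphi)\leq 0$, so $\varphi$ is a bona fide Dirichlet series on all of $\C_+$ (indeed bounded on each $\C_\varepsilon$), and since $\Psi(\C_+)\subset\overline{\C_+}$ with the image actually landing in some right half-plane, the composition is well defined and Theorem \ref{menudeo} applies to guarantee the result is again in $\mathcal{D}$. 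Once this convergence and well-definedness is pinned down, the rest is the routine algebraic identification carried out above. I would present the argument by first disposing of the $\mathcal{G}_\infty$ claim via Theorem \ref{menudeo}, and then treating the half-plane condition by the short case analysis on the characteristics.
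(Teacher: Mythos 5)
Your proof is correct, but it takes a genuinely different route from the paper's. The paper obtains closure under composition abstractly from the operator characterizations: since $\mathcal{G}_{\infty}$ (resp.\ $\mathcal{G}$) is precisely the set of symbols of bounded composition operators on $\mathcal{H}^{\infty}$ (Theorem \ref{boundedness-Hinfty}) (resp.\ on $\mathcal{H}^2$, Theorem \ref{gordon}), the associativity identity $f\circ(\Phi\circ\Psi)=(f\circ\Phi)\circ\Psi$ immediately places $\Phi\circ\Psi$ in the class, so both the Dirichlet-series structure and, in the $\mathcal{G}$ case, the half-plane condition come for free; the only remaining work is to identify $c_{\Phi\circ\Psi}$, which the paper does by equating the abstract decomposition $c_{\Phi\circ\Psi}s+\eta$ with the computed one, dividing by $s$, and letting $\Re s\to+\infty$ via Lemma \ref{lemaso}. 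You instead verify the definition directly: Theorem \ref{menudeo} with $\Lambda=\N$ (applied to the symbol $\Psi:\C_+\to\C_+$ and $f=\varphi\in\mathcal{D}$) yields $\varphi\circ\Psi\in\mathcal{D}$, so the explicit decomposition $\Phi\circ\Psi(s)=c_\Phi c_\Psi s+c_\Phi\psi(s)+\varphi(\Psi(s))$ exhibits membership in $\mathcal{G}_{\infty}$ and lets you read off the characteristic at once --- strictly speaking you are also using that the characteristic is uniquely determined by such a decomposition, but that follows from Lemma \ref{lemaso} exactly as in the paper's identification step, so this is harmless. Your convergence bookkeeping is the right thing to worry about and is handled correctly: by Theorem \ref{queffelecs} (and the Remark after the definition) $\sigma_u(\varphi)\leq 0$, so $\varphi$ converges on all of $\C_+\supset\Psi(\C_+)$ and the composition is well defined and analytic there. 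Your case analysis for the $\mathcal{G}$ condition is also sound, including the one nontrivial subcase $c_\Phi\geq 1$, $c_\Psi=0$, where the Julia--Wolff--Carath\'eodory estimate gives $\Re\Phi(\Psi(s))\geq c_\Phi\Re\Psi(s)>c_\Phi\cdot\tfrac12\geq\tfrac12$; the paper never needs this because Theorem \ref{gordon} delivers the $\mathcal{G}$ membership wholesale. In sum, your argument buys a self-contained, constructive proof that does not route through the $\mathcal{H}^2$/$\mathcal{H}^{\infty}$ boundedness theorems, while the paper's buys brevity and sidesteps all domain and convergence issues for $\varphi\circ\Psi$, at the cost of the extra asymptotic step needed to pin down the characteristic.
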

\begin{proof}
Given $\Phi,\Psi\in\mathcal{G}_{\infty}$ and $f\in\mathcal{H}^\infty$, Theorem \ref{boundedness-Hinfty} guarantees that $f\circ\Phi\in\mathcal{H}^\infty$. Now, once more, $f\circ(\Phi\circ\Psi)=(f\circ\Phi)\circ\Psi\in\mathcal{H}^\infty$. Then, $\Phi\circ\Psi$ is a symbol of a bounded composition operator from $\mathcal{H}^\infty$ into $\mathcal{H}^\infty$. Therefore, again by Theorem \ref{boundedness-Hinfty}, necessarily, $\Phi\circ\Psi\in\mathcal{G}_{\infty}$. Similarly, replacing the role in this argument of $\mathcal H^{\infty}$ by $\mathcal{H}^2$ and Theorem \ref{boundedness-Hinfty} by Theorem \ref{gordon}, we get that $\Phi\circ\Psi\in\mathcal{G}$ whenever $\Phi, \Psi\in\mathcal{G}$.

For the second part of the statement, since we already know that $\Phi\circ\Psi\in\mathcal{G}_{\infty}$, we can write
\begin{equation}\label{segundo}
\Phi\circ\Psi(s)=c_{\Phi\circ\Psi}s+\eta(s), \quad c_{\Phi\circ\Psi}\in\N\cup\{0\} \text{  and  }\eta\in\mathcal{D}.
  \end{equation}
  But also
  \begin{equation}\label{primero}
    \Phi\circ\Psi(s)=c_{\Phi}\Psi(s)+\varphi(\Psi(s))=c_{\Phi}c_{\Psi}s+c_{\Phi}\psi(s)+\varphi(\Psi(s)).
\end{equation}
Identifying \eqref{primero} and \eqref{segundo}, we find that
\begin{align*}
    0&=
        (c_{\Phi}c_{\Psi}-c_{\Phi\circ\Psi})s+c_{\Phi}\psi(s)+\varphi(\Psi(s))-\eta(s).
  \end{align*}
 Dividing by $s$ in the latter identity, we find that
  \begin{align*}
       0=c_{\Phi}c_{\Psi}-c_{\Phi\circ\Psi}+\frac1s\left(c_{\Phi}\psi(s)+\varphi(\Psi(s))-\eta(s)
    \right).
   \end{align*}
   If we  let $\text{Re}(s)\to+\infty$,  by Lemma \ref{lemaso}, the term in parenthesis converges. Hence, it is bounded. This yields the desired conclusion.
\end{proof}

\section{Semigroups of analytic functions}\label{sec:semigroups}
Semigroups of analytic functions in  the unit disc $\D$ (and then in the right half-plane) have been a subject of study since the early 1900s. In 1978, Berkson and Porta \cite{porta} studied continuous semigroups of analytic self-maps of the unit disc in connection with composition operators. This paper meant the resurgence of this area. The state of the art can be seen in \cite{manoloal}.
We recall the definition straightaway.
\begin{definition} \label{def:semigroup}
We say that a family $\{\Phi_t\}_{t\geq0}$ of analytic functions $\Phi_t:\C_{+}\to\C_{+}$ is a semigroup if it verifies the following two algebraic properties:
\begin{itemize}
\item $\Phi_0(s)=s.$
    \item For every $t,u\geq0$, $\Phi_t\circ\Phi_u(s)=\Phi_{t+u}(s)$.
    \end{itemize}
If, in addition, it satisfies that $\Phi_t$ converges to $\Phi_0$ uniformly on compact subsets of $\C_{+}$ as $t\to0^+$, we say that it is a continuous semigroup.
\end{definition}

It is worth recalling that any holomorphic function in a continuous semigroup is univalent (see, e.g., \cite[Theorem 8.1.17]{manoloal}).

A key goal in this paper is the study of semigroups of analytic functions $\{\Phi_{t}\}$ in the Gordon-Hedenmalm class $\mathcal{G}$ and in the class $\mathcal{G}_{\infty}$. From now on, for convenience, we shall write $c_t$ instead of $c_{\Phi_t}$. Our first result studies the behaviour of the mapping $t\mapsto c_t$. As we are about to see, the continuous semigroup structure forces this mapping to be necessarily constantly equal to $1$. This is the content of the next proposition.
\begin{proposition}\label{cste1}
Let $\{\Phi_t\}_{t\geq0}$ be a continuous semigroup of analytic functions in the class $\mathcal{G}_{\infty}$. Then, the sequence of symbols $\{c_t\}_{t\geq0}$ is constantly equal to $1$.
\end{proposition}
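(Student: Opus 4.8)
The plan is to reduce the statement to two ingredients: the multiplicativity of the characteristic under composition, and the univalence of the maps forming a continuous semigroup. First I would record that, by Proposition \ref{colorario} together with the semigroup law $\Phi_t\circ\Phi_u=\Phi_{t+u}$, the characteristic is multiplicative in $t$, namely
\begin{equation*}
c_{t+u}=c_{\Phi_t\circ\Phi_u}=c_tc_u,\qquad t,u\geq0,
\end{equation*}
and that $c_0=1$ because $\Phi_0=\mathrm{id}$. Thus $t\mapsto c_t$ is a homomorphism from $([0,\infty),+)$ into $(\N\cup\{0\},\cdot)$, and the whole problem becomes understanding such homomorphisms under the extra constraint coming from continuity.

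The crucial step — and the one I expect to be the main obstacle — is to rule out the value $c_t=0$. The temptation is to argue that $c_t=\lim_{\Re s\to+\infty}\Phi_t(s)/s$ depends continuously on $t$ and, being integer-valued, must be constant; but this is delicate, since that limit is taken at infinity whereas the semigroup continuity is only uniform on compact subsets of $\C_+$, so I would avoid it. Instead I would use univalence: every element of a continuous semigroup is univalent in $\C_+$ (see \cite[Theorem 8.1.17]{manoloal}). Now suppose $c_{t_0}=0$ for some $t_0>0$; then $\Phi_{t_0}=\varphi_{t_0}$ is a Dirichlet series, and since $\varphi_{t_0}:\C_+\to\C_+$ is analytic, Theorem \ref{queffelecs} gives $\sigma_u(\varphi_{t_0})\leq0$, so $\varphi_{t_0}$ converges throughout $\C_+$. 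Theorem \ref{thm:nounivalent} then says $\varphi_{t_0}$ is not one-to-one in $\C_+$ (and if it were constant it would fail to be injective anyway), contradicting the univalence of $\Phi_{t_0}$. Hence $c_t\in\N$ for every $t\geq0$.

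Once $c_t$ is known to be a positive integer for all $t$, the conclusion follows by a purely arithmetic argument, needing no further analysis. From $c_1=c_{1/n}^{\,n}$ with $c_{1/n}\in\N$, the integer $c_1$ is a perfect $n$-th power for every $n\geq1$, which forces $c_1=1$; consequently $c_{1/n}=1$ and $c_r=c_{1/n}^{\,m}=1$ for every rational $r=m/n\geq0$. Finally, for an arbitrary $t>0$ I would choose a rational $r>t$ and use $1=c_r=c_tc_{r-t}$ together with $c_t,c_{r-t}\in\N$ to conclude $c_t=1$. This completes the argument; the only place where the continuity hypothesis is genuinely used is in invoking univalence to exclude $c_t=0$.
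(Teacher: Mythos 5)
Your proof is correct, and its second half takes a genuinely different route from the paper. Both arguments share the same skeleton: multiplicativity $c_{t+u}=c_tc_u$ via Proposition \ref{colorario}, and the exclusion of $c_t=0$ by combining univalence of the elements of a continuous semigroup with Theorem \ref{thm:nounivalent}. The paper, however, first proves that $t\mapsto c_t$ is measurable (as the pointwise limit of the continuous functions $t\mapsto\Phi_t(n)/n$, using Lemma \ref{lemaso}), then invokes the classification of measurable multiplicative functions \cite[Theorem 8.1.14]{manoloal} to conclude $c_t\in\{0,1\}$, and only afterwards uses univalence to rule out $0$. You instead rule out $0$ first, so that $c_t\in\N$, and then replace the entire measurability machinery by elementary arithmetic: $c_1=c_{1/n}^{\,n}$ with $c_{1/n}\in\N$ forces $c_1=1$ (otherwise $c_1\geq 2^n$ for all $n$), hence $c_q=1$ on the rationals, and the factorization $1=c_r=c_tc_{r-t}$ in positive integers handles irrational $t$ without any limiting argument. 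Your route is more elementary and self-contained, dispensing with \cite[Theorems 8.1.14 and 8.1.15]{manoloal}, and it isolates precisely where continuity enters (only through univalence), so it in fact proves the stronger statement that $c_t\equiv 1$ for any semigroup in $\mathcal{G}_\infty$ whose elements are univalent; the paper's measurability argument is the more standard general-purpose tool, and its subsequent remark gives yet another variant for the class $\mathcal{G}$ that avoids univalence altogether. Your cautionary aside is also well taken: arguing directly that $c_t=\lim_{\Re s\to+\infty}\Phi_t(s)/s$ varies continuously in $t$ would indeed be unjustified, since that limit is at infinity while the semigroup continuity is only locally uniform. One small remark: invoking Theorem \ref{queffelecs} to get $\sigma_u(\varphi_{t_0})\leq 0$ is harmless but unnecessary, since the Rouch\'e construction in the proof of Theorem \ref{thm:nounivalent} produces non-injectivity in every sufficiently remote half-plane, so convergence of $\varphi_{t_0}$ in some half-plane $\C_\theta$ already contradicts univalence on $\C_+$.
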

\begin{proof}
By Proposition \ref{colorario}, the characteristic of $\Phi_t\circ\Phi_u(s)=\Phi_{u+t}(s)$ satisfies $c_{u+t}=c_uc_t$. We claim that  the application $f:\R_+\cup\{0\}\to\R$ given by $f(t):=c_t$ is measurable. Indeed,  for each $n\in\N$, define the functions $h_n(t)=\Phi_t(n)/n$. By hypothesis, these functions are continuous as functions of $t$ (see \cite[Theorem 8.1.15]{manoloal}).  Lemma \ref{lemaso} yields
\[
\lim_{n\to\infty}h_n(t)=\lim_{n\to\infty}\frac{\Phi_t(n)}{n}=c_t.
\]
Recalling that the pointwise limit of continuous functions is measurable, 
the claim follows.
Summing up, $f$ is a measurable function satisfying that $f(u+t)=f(u)f(t)$, for all $s,t\geq 0$. 
By  \cite[Theorem 8.1.14]{manoloal}, either $f(t)\in\{0,1\}$, or there exists $\lambda\in\R$ such that $f(t)=e^{\lambda t}$ for every $t\geq0$.
Since $c_t\in\N\cup \{0\}$ for every $t\geq0$, necessarily $c_{t}\in\{0,1\}$. Since the function $\Phi_{t}$ must be univalent, by Theorem \ref{thm:nounivalent}, we have that $c_{t}\geq 1$. Consequently, $c_t=1$ for every $t\geq0$. 
\end{proof}

\begin{remark}
If the functions of the semigroup belong to the class $\mathcal{G}$, there is an alternative way to conclude the above proof without using the univalence of the functions of the semigroup. Indeed, take $s_0>0$ such that $c_{s_0}=0$. Then $c_t=c_{t-s_{0}}c_{s_{0}}=0$ for every $t\geq s_0$. Note that this implies that whenever $c_u=1$, then $c_t=1$  for every $t\leq u$.  Therefore, there is a point $t_0$ such that $c_t=1$ if $t<t_0$ and $c_t=0$ if $t>t_0$.  We claim that $t_0=\infty$. If it were equal to zero, it would mean that $c_t=0$ for every $t>0$. Now, since $\Phi_t\to \Phi_0$ as $t\to0$ on compact sets of $\C_+$, we would have that $\Phi_t(1/4)\to 1/4$ as $t\to0$. However, since $\Phi_t\in\mathcal{G} $, we know that $\text{Re}(\Phi_t(1/4))>1/2$, in contradiction with our last statement.  Assume that $0<t_0<\infty$. Then 
\[
c_{\frac43t_0}=c_{\frac23t_0}c_{\frac23t_0}=1.
\]
However, this contradicts the fact that $c_t=0$ for any $t>t_0$. Hence, $t_0=\infty$ and, consequently, $c_t=1$ for any $t\geq0$. 
\end{remark}

It is worth pointing out that there are non-continuous semigroups of functions in the class $\mathcal G$. For example, take a non-continuous function $f:\R\to \R$ such that $f(t+u)=f(t)+f(u)$ for all $t,u\in \R$ and consider $\Phi_{t}(s)=s+if(t). $ Clearly, $\{\Phi_{t}\}$ is a non-continuous semigroup in $\mathcal G$.

If a holomorphic self-map $\Phi$ of $\C_{+}$ is not an elliptic automorphism and has a fixed point $s_{0}$ in $\C_{+}$, then its  iterates converge uniformly on compact subsets of $\C_{+}$ to the constant function $s_{0}$ (see \cite[Proposition 1.8.3]{manoloal}). If $\{\Phi_{t} \}$ is a semigroup in the class $\mathcal G_{\infty}$,  by Proposition \ref{cste1}, $\Re \Phi_{t}(s)\geq c_t\Re s=\Re s$ for all $s$. This clearly implies that $\Phi_{t}$ has no fixed point in $\C_{+}$ and, in fact, by \cite[Theorem 8.3.1]{manoloal}, the Denjoy-Wolff point of the semigroup is $\infty$, that is,
\begin{equation}\label{DW}
\lim_{t\rightarrow +\infty }\Phi_{t}(s)=\infty,\quad s\in\C_{+}.
\end{equation}
Moreover, for every $t>0$, by Lemma \ref{lemaso},
$$
\lim_{\Re s\to +\infty} \frac{\Phi_{t}(s)}{s}=1.
$$
With the standard classification of dynamics, this means that each function $\Phi_{t}$ is a parabolic self-map of $\C_{+}$ (see \cite[Section 1.8]{manoloal} for the classification in the setting of the unit disc).

\section{Semigroups of composition operators} \label{sec:semigroupscompositiooperators}
The theory of strongly continuous semigroups of bounded operators on Banach spaces has been a fruitful tool in a great number of areas in Analysis. Let us recall this  notion. 
\begin{definition}
Let $X$ be a Banach space and $\{T_{t}\}_{t\geq 0}$ a family of bounded operators from $X$ into itself. We say that  $\{T_{t}\}_{t\geq 0}$ is a semigroup if  it verifies the following two algebraic properties:
\begin{enumerate}
\item[(i)] $T_0=\mathrm{Id}$, where $\mathrm{Id}$ denotes the identity map on $X$;
    \item[(ii)] For every $t,u\geq0$, $T_t\circ T_u=T _{t+u}$.
    \end{enumerate}
If, in addition, it satisfies that 
\begin{enumerate}
\item[(iii)] $\lim_{t\to 0^{+}}T_{t}f=f$ for all $f\in X$
    \end{enumerate} we say that it is a strongly continuous semigroup (also known as $C_{0}$-semigroup).
\end{definition}

It is well-known that (iii) is equivalent to the fact that, for each $f\in X$, the map $[0,+\infty)\ni t\mapsto T_{f}f$ is continuous \cite[Page 3, Proposition 1.3]{engels}.

Clearly, if we have a semigroup of analytic functions $\{\Phi_{t}\}$, we obtain a semigroup of composition operators $\{C_{\Phi_{t}}\}$, whenever such composition operators are well-defined. The converse is also true because a composition operator completely determines its symbol:

\begin{lemma}\label{regato}
Let $\Phi,\Psi:\C_{\eta}\to\C_{\eta}$, with $\eta\geq 0$, be two analytic functions such that $k^{-\Phi}=k^{-\Psi}$ for every $k\in\N$. Then, $\Phi=\Psi$. 
\end{lemma}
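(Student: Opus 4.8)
The plan is to exploit the fact that the hypothesis $k^{-\Phi}=k^{-\Psi}$ says the exponentials $e^{-\Phi\log k}$ and $e^{-\Psi\log k}$ coincide, so that the difference $\Psi-\Phi$ is forced to take values in a discrete set (a lattice on the imaginary axis) depending on $\log k$; varying $k$ over multiplicatively independent integers will then pin this value down to $0$.

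First I would set $g:=\Psi-\Phi$, which is analytic on $\C_\eta$, and rewrite the hypothesis. Since $k^{-\Phi(s)}=e^{-\Phi(s)\log k}$ and likewise for $\Psi$, the equality $k^{-\Phi(s)}=k^{-\Psi(s)}$ is equivalent to $e^{g(s)\log k}=1$, that is
$$
g(s)\log k \in 2\pi i\,\Z \qquad \text{for all } s\in\C_\eta \text{ and all } k\in\N.
$$
Fixing $k=2$, the function $g$ maps the connected open set $\C_\eta$ continuously into the discrete set $\tfrac{2\pi i}{\log 2}\,\Z$. A continuous image of a connected set is connected, and the only connected subsets of a discrete set are singletons, so $g$ is constant: $g\equiv \tfrac{2\pi i n}{\log 2}$ for some fixed $n\in\Z$.

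Finally I would feed this back into the relation with $k=3$. Then $g\,\log 3\in 2\pi i\,\Z$ gives $\tfrac{n\log 3}{\log 2}\in\Z$, i.e. $n\log 3=m\log 2$ for some $m\in\Z$, equivalently $3^{\,n}=2^{\,m}$. By unique factorization this forces $n=m=0$, hence $g\equiv 0$ and $\Phi=\Psi$.

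The two points requiring care are both elementary. The first is the constancy step: it uses that $\C_\eta$ is connected and that an analytic (hence continuous) function into a discrete set must be constant — this is the only place connectedness of the domain enters. The second is the closing number-theoretic fact that $\log 2$ and $\log 3$ are linearly independent over $\Q$ (equivalently, that $2$ and $3$ are multiplicatively independent), which is what lets two values of $k$ suffice; a single $k$ would only determine $g$ up to the corresponding lattice. No genuine obstacle arises beyond ensuring that two multiplicatively independent choices of $k$, such as $2$ and $3$, are used.
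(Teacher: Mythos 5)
Your proof is correct and is essentially the paper's own argument: the paper likewise deduces from $\exp((\Phi-\Psi)\log k)=1$ that $\Phi-\Psi$ equals a constant $2m_k\pi i/\log k$ (the same connectedness-into-a-discrete-set step, stated implicitly), and then compares $k=2$ and $k=3$, concluding via the irrationality of $\log 3/\log 2$ exactly as you do with $3^n=2^m$.
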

\begin{proof}
By hypothesis, for each $k\in \N$, 
\begin{align*}
 \exp((\Phi(s)-\Psi(s))\log k))=1, \quad \Re s>\eta.
\end{align*}
This forces that there is $m_{k}\in \Z$ such that $ (\Phi(s)-\Psi(s))\log k=2m_{k}\pi i$ for all $s$. If  $m_{2}$ is not zero, then $\log 3/\log2 \in\Q$. However, this is not true, so $m_{2}=0$ implying $\Phi=\Psi$.
\end{proof}
\begin{proposition}\label{Prop:algebraicsemigroup}
Let $1\leq p\leq\infty$. Let $\{\Phi_t\}_{t\geq0}$ be a collection of analytic functions in the class $\mathcal G_{\infty}$. If $\{T_t\}_{t\geq0}$ is a semigroup of composition operators in $\mathcal{H}^p$, where $T_t=f\circ\Phi_t$, then $\{\Phi_t\}_{t\geq0}$ is a semigroup of analytic functions.
\end{proposition}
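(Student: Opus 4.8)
The plan is to recover each symbol $\Phi_t$ from the operator $T_t$ by testing on the Dirichlet monomials and then invoking Lemma \ref{regato}, which says that a holomorphic self-map of $\C_+$ is completely determined by the family $\{k^{-\Phi}\}_{k\in\N}$. For each $k\in\N$ set $f_k(s)=k^{-s}$; its Bohr lift is a single monomial on $\D^\infty$, so $f_k\in\mathcal{H}^p$ for every $1\le p\le\infty$, and one computes $T_t(f_k)(s)=f_k(\Phi_t(s))=k^{-\Phi_t(s)}$. The whole argument only uses the algebraic semigroup axioms $T_0=\mathrm{Id}$ and $T_t\circ T_u=T_{t+u}$, not any continuity.

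First I would check the identity axiom. Since $T_0=\mathrm{Id}$, we have $k^{-\Phi_0(s)}=T_0(f_k)(s)=f_k(s)=k^{-s}$ for every $k\in\N$. Applying Lemma \ref{regato} with $\Psi(s)=s$ (both maps send $\C_+$ into $\C_+$, so the lemma applies with $\eta=0$) gives $\Phi_0(s)=s$.

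Next comes the composition law. Evaluating the operator identity $T_t\circ T_u=T_{t+u}$ on $f_k$, the left-hand side equals $T_t\bigl(T_u(f_k)\bigr)(s)=T_t\bigl(k^{-\Phi_u}\bigr)(s)=k^{-\Phi_u(\Phi_t(s))}$, while the right-hand side equals $k^{-\Phi_{t+u}(s)}$. Hence $k^{-\Phi_u\circ\Phi_t}=k^{-\Phi_{t+u}}$ for all $k\in\N$, and Lemma \ref{regato} yields $\Phi_u\circ\Phi_t=\Phi_{t+u}$.

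The only point requiring care is that composition operators compose in the reverse order of their symbols, so the previous step produces $\Phi_u\circ\Phi_t$ rather than $\Phi_t\circ\Phi_u$. This is harmless: since $t+u=u+t$, interchanging the roles of $t$ and $u$ gives likewise $\Phi_t\circ\Phi_u=\Phi_{u+t}=\Phi_{t+u}$, which is exactly the algebraic identity demanded in Definition \ref{def:semigroup}. Together with $\Phi_0(s)=s$, this shows that $\{\Phi_t\}_{t\ge0}$ is a semigroup of analytic functions. I do not anticipate any genuine obstacle: everything rests on the fact that the separating family $\{k^{-s}\}_{k\in\N}$ lies in $\mathcal{H}^p$ together with the injectivity of $\Phi\mapsto\{k^{-\Phi}\}$ provided by Lemma \ref{regato}.
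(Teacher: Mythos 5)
Your proof is correct and follows essentially the same route as the paper: evaluate the operator identities $T_0=\mathrm{Id}$ and $T_t\circ T_u=T_{t+u}$ on Dirichlet monomials and invoke Lemma \ref{regato}, taking care of the reversal of composition order exactly as the paper does. The only cosmetic difference is that you test on the functions $k^{-s}$ directly rather than on all $f\in\mathcal{H}^p$, which is precisely the separating family Lemma \ref{regato} requires.
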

\begin{proof}
The semigroup structure of $\{T_t\}_{t\geq0}$ guarantees that for every $f\in\mathcal{H}^p$
\[
f\circ\Phi_{t+u}=T_{t+u}(f)=T_t\circ T_u(f)=(f\circ\Phi_u)\circ\Phi_t=f\circ(\Phi_u\circ\Phi_t)
\]
for every $u,t\geq0$. By Lemma \ref{regato}, we have that $\Phi_{t+u}=\Phi_u\circ\Phi_t$ for every $u,t\geq0$. The same argument shows that $\Phi_{0}$ is the identity map.
\end{proof}

Now, we are ready to state and prove the main result of this section. It establishes   a one-to-one relationship between strongly continuous semigroups of composition operators in $\mathcal{H}^2$ and continuous semigroups in the Gordon-Hedenmalm class.

\begin{theorem}\label{gordo1}
Let $\{\Phi_t\}_{t\geq0}$ be a semigroup of analytic functions, such that $\Phi_t\in\mathcal{G}$ for every $t>0$ and denote by $T_{t}$ the composition operator $T_t(f)=f\circ\Phi_t$. Then, the following assertions are equivalent:
\begin{enumerate}[a)]
    \item $\{T_t\}_{t\geq0}$ is a strongly continuous semigroup in $\mathcal{H}^2$.
    \item $\{\Phi_t\}_{t\geq0}$ is a continuous semigroup.
    \item $\Phi_t(s)\to s$, as $t$ goes to $0$,  uniformly in $\C_{\varepsilon}$, for every $\varepsilon>0$.
\end{enumerate}
\end{theorem}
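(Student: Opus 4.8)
The plan is to establish the three statements as equivalent through the cycle b) $\Rightarrow$ a) $\Rightarrow$ c) $\Rightarrow$ b). The implication c) $\Rightarrow$ b) is immediate: any compact subset of $\C_+$ is bounded, hence contained in $\{\Re s\ge\varepsilon\}\cap\{|\Im s|\le M\}\subset\C_{\varepsilon}$ for suitable $\varepsilon,M$, so uniform convergence $\Phi_t\to s$ on each $\C_\varepsilon$ forces uniform convergence on compact sets; together with the assumed algebraic semigroup identities this is precisely the continuity required in b). The two substantial directions are b) $\Rightarrow$ a) and a) $\Rightarrow$ c).

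For b) $\Rightarrow$ a) I would run a soft Hilbert-space argument. First, Proposition \ref{cste1} gives $c_t=1$ for all $t$, so by Remark \ref{norm} each $T_t$ is a contraction on $\mathcal H^2$, and in particular the family is uniformly bounded near $0$. Since Dirichlet polynomials are dense, it suffices to prove $\|T_t(n^{-s})-n^{-s}\|_{\mathcal H^2}\to 0$ for each monomial. Now $T_t(n^{-s})=n^{-\Phi_t}$ satisfies $\|n^{-\Phi_t}\|_{\mathcal H^2}\le 1=\|n^{-s}\|_{\mathcal H^2}$, while pairing against a reproducing kernel $K_{s_0}$ gives $\langle n^{-\Phi_t},K_{s_0}\rangle=n^{-\Phi_t(s_0)}\to n^{-s_0}$ for every $s_0\in\C_{1/2}$, by the compact convergence in b) together with the boundedness \eqref{normevaluationfunciotnal} of point evaluations. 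As the kernels are total, this yields weak convergence $n^{-\Phi_t}\rightharpoonup n^{-s}$; combining with $\limsup_t\|n^{-\Phi_t}\|\le\|n^{-s}\|$ and weak lower semicontinuity of the norm gives $\|n^{-\Phi_t}\|\to\|n^{-s}\|$, so the Radon--Riesz property upgrades weak to norm convergence. A standard $3\varepsilon$-argument using the uniform bound then delivers $T_tf\to f$ for all $f$, i.e. a).

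The core is a) $\Rightarrow$ c), where one gains half-plane convergence from compact convergence. From a), for each $k$ the strong continuity gives $k^{-\Phi_t}\to k^{-s}$ in $\mathcal H^2$, hence, by \eqref{normevaluationfunciotnal}, uniformly on every $\C_{1/2+\sigma}$. The key observation is that $\Phi_t\in\mathcal G$ forces $\Re\Phi_t>0$ on $\C_+$ (either $\Re\Phi_t\ge\Re s$ when $c_t\ge1$, or $\Re\Phi_t\ge 1/2$ when $c_t=0$), so $k^{-\Phi_t}=T_t(k^{-s})$ is a Dirichlet series bounded by $1$, i.e. $\{k^{-\Phi_t}\}$ is a bounded family in $\mathcal H^\infty$. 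Bayart's Montel-type Theorem \ref{bayarto} then lets me upgrade $k^{-\Phi_t}\to k^{-s}$ to be uniform on every half-plane $\C_\delta$: any sequence $t_n\to0$ has a subsequence along which $k^{-\Phi_{t_n}}$ converges uniformly on half-planes to some $\mathcal H^\infty$ function, which must equal $k^{-s}$ by the known convergence on $\C_{1/2+\sigma}$; as the limit is forced, the full net converges uniformly on each $\C_\delta$. It remains to pass from $k^{-\Phi_t}$ back to $\Phi_t$. On a strip $\{\delta\le\Re s\le R\}$ the functions $k^{-s}$ are bounded below, so $k^{-(\Phi_t-s)}\to 1$ uniformly for $k=2,3$; taking principal logarithms and using the incommensurability of $\log 2$ and $\log 3$ (as in Lemma \ref{regato}) to kill the integer ambiguities pins $\Phi_t-s\to 0$ uniformly on the strip, and simultaneously rules out $c_t\in\{0\}\cup\{2,3,\dots\}$, confirming $c_t=1$. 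Finally, on $\{\Re s>R\}$ the difference $\Phi_t(s)-s=\varphi_t(s)$ is a bounded Dirichlet series tending to its first coefficient as $\Re s\to+\infty$ (Lemma \ref{lemaso}), so the maximum principle bounds $\sup_{\Re s>R}|\varphi_t|$ by $\sup_{\Re s=R}|\varphi_t|$, which already tends to $0$; patching the two regions gives $\sup_{\C_\delta}|\Phi_t-s|\to 0$, i.e. c).

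I expect the main obstacle to be exactly this last inversion step: transferring uniform control from $k^{-\Phi_t}$ back to $\Phi_t$ \emph{uniformly up to and including} the region $\Re s\to+\infty$. Convergence on vertical strips is routine once Theorem \ref{bayarto} is available, but the behaviour at infinity, where $k^{-s}$ degenerates and one cannot simply divide, is delicate, and it is precisely there that the uniformity on half-planes (rather than merely on compacts) emphasized after the statement is genuinely earned; handling it cleanly relies on combining the asymptotics of Lemma \ref{lemaso} with a Phragmén--Lindelöf argument and the a priori fact, guaranteed by membership in $\mathcal G$ and Theorem \ref{queffelecs}, that $\Phi_t-s$ is a bounded Dirichlet series with nonnegative real part.
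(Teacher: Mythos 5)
Your reductions c)$\Rightarrow$b) and b)$\Rightarrow$a) are sound: the latter, via reproducing kernels, the contraction bound from Remark \ref{norm}, the Radon--Riesz property and density of Dirichlet polynomials, is a legitimate alternative to the paper's appeal to continuity in the weak operator topology, and your use of Theorem \ref{bayarto} to upgrade $k^{-\Phi_t}\to k^{-s}$ to uniform convergence on half-planes is exactly the paper's move. The genuine gap is the inversion step in a)$\Rightarrow$c): from $2^{-(\Phi_t(s)-s)}\to 1$ and $3^{-(\Phi_t(s)-s)}\to 1$ uniformly on a strip, you claim that ``taking principal logarithms and using the incommensurability of $\log 2$ and $\log 3$ (as in Lemma \ref{regato})'' pins $\Phi_t-s\to 0$. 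It does not. What the uniform convergence gives is branch data: integers $k_t$ and $m_t$ with $\Phi_t(s)-s=-\frac{2\pi i k_t}{\ln 2}+o(1)$ and $\Phi_t(s)-s=-\frac{2\pi i m_t}{\ln 3}+o(1)$, whence $\frac{k_t}{\ln 2}-\frac{m_t}{\ln 3}\to 0$. Since $\ln 3/\ln 2\notin\Q$, the additive group $\frac{1}{\ln 2}\Z+\frac{1}{\ln 3}\Z$ is \emph{dense} in $\R$, so this relation is satisfied by arbitrarily large nonzero pairs $(k_t,m_t)$ and forces nothing; the exact-identity rigidity exploited in Lemma \ref{regato} does not survive the passage to approximate identities. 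The paper's own Remark following Theorem \ref{gordo1} is a concrete counterexample to your step: by Kronecker's Lemma there exist $x_n$ with $|x_n|\to\infty$ and $m^{-ix_n}\to 1$ for \emph{every} $m$, so for $\Phi_n(s)=s+ix_n$ one has $T_nf\to f$ in $\mathcal{H}^2$ and $k^{-(\Phi_n(s)-s)}\to 1$ uniformly for all $k$, yet $\Phi_n(s)-s=ix_n\not\to 0$. Your argument for a)$\Rightarrow$c) never invokes the semigroup law, so it would ``prove'' the false conclusion in this example; the same defect leaves your parenthetical claim that this step rules out $c_t\neq 1$ unsupported (in the paper, $c_t=1$ comes from Proposition \ref{cste1} once b) is known).

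The missing idea --- and the heart of the paper's proof --- is to control the branch integers $k_t$ using the semigroup structure. The paper first shows that $t\mapsto k_t$ is additive for small times, $k_{t+\tau}=k_t+k_\tau$, by expanding $\Phi_t(\Phi_\tau(s))=\Phi_{t+\tau}(s)$ at a well-chosen point of the strip where all the error terms $\varepsilon_t$ are smaller than $\frac{1}{2}$; it then shows that each level set $F_k=\{t:k_t=k\}$ is closed, using strong continuity at every time $b$ (not only at $t=0$), Montel's theorem, and an \emph{exact} identity $2^{g(s)+\frac{2\pi i}{\ln 2}(k-k_b)}=1$ together with $\alpha+\frac{2\pi i}{\ln 2}(k-k_b)=\frac{2\pi i}{\ln m}q_m$ for all $m$, to which incommensurability legitimately applies; finally, Baire's category theorem produces some $F_{\tilde k}$ with nonempty interior, and additivity then forces $k_t=0$ for all small $t$. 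Only after this does the uniform strip convergence hold; the subsequent passage from strips to full half-planes (your Phragm\'en--Lindel\"of patching, which is correct and is also how the paper argues, via the fact that the supremum of the bounded Dirichlet series $\varphi_t$ over $\C_{\varepsilon}$ equals its supremum over a vertical strip) goes through. As written, your proposal is missing precisely this additivity-plus-Baire mechanism, without which the step from $k^{-\Phi_t}\to k^{-s}$ back to $\Phi_t\to \mathrm{id}$ fails.
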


\begin{proof}
We begin showing $a)$ implies $b)$. Take $f(s)=m^{-s}$, with $m\in \N$, $m\geq 2$. Then, we consider the family of functions $g_t(s)=f\circ\Phi_t(s)=m^{-\Phi_t(s)}$. By $a)$, $g_{t}$ converges to $f$ in $\mathcal H^{2}$, as $t$ goes to $0$. The functions $g_t$ are analytic in $\C_+$ and for every $s\in\C_{+}$ and  $t\geq0$
\[
\left|
m^{-\Phi_t(s)}
\right|= m^{-\text{Re}(\Phi_t(s))}<1.
\]
Hence, for every $t\geq0$, $g_t(s)=m^{-\Phi_t(s)}\in H^{\infty}(\C_+)$ and, as we already know (see Theorem \ref{menudeo}), it is also in the class $\mathcal{D}$. In other words, for every $t\geq0$, $g_t\in\mathcal{H}^{\infty}$ and the family $\{g_t: \ t\geq 0\}$ is uniformly bounded in $\C_+$. 
Thus, Bayart's Theorem \ref{bayarto} guarantees that $m^{-\Phi_t(s)}\to m^{-s}$ uniformly on half-planes $\C_{\varepsilon}$ as $t\to 0$. 
 Multiplying by $m^s$, we have that
\begin{equation*}
m^{s-\Phi_t(s)}\to1, \quad t\to 0
  \end{equation*}
where the  convergence takes place uniformly in vertical strips of $\C_{\varepsilon}$, since the function $m^s$ is no longer bounded in any half-plane $\C_{\varepsilon}$. 

Fix $0<\varepsilon<\eta$ and $\mathbb S =\mathbb S(\varepsilon, \eta)=\C_{\varepsilon}\setminus \overline {\C_{\eta}}$. We assume that $\varepsilon+\frac{2\pi}{\ln 2}<\eta $. There is $\delta_{1}>0$ such that $|2^{s-\Phi_{t}(s)}-1|<1/2$ for all $s\in \mathbb S$ and $t<\delta_{1}$.  For each $t<\delta_{1}$, the image of the continuous function $\mathbb S\ni s\mapsto s-\Phi_{t}(s)$ must be contained in one of the connected components of the set $\{w\in \C: |2^{w}-1|<1/2\}$. Therefore there is $k_{t}\in \Z$ such that 
\begin{equation}\label{Eq:gordo1}
\left| s-\Phi_{t} (s)-\frac{2\pi i k_{t}}{\ln 2}  \right|<\frac{\pi}{\ln 2}, 
\end{equation}
for all $s\in \mathbb S$ and $t<\delta_{1}$. Denote $\varepsilon_{t}(s)=s-\Phi_{t} (s)-\frac{2\pi i }{\ln 2} k_{t}$. Since $2^{\varepsilon_{t}(s)}$ converges to $1$ uniformly in $s\in \mathbb S$ as $t$ goes to $0$, by \eqref{Eq:gordo1}, we have that  $\varepsilon_{t}(s)$ converges to $0$ uniformly in $s\in \mathbb S$ as $t$ goes to $0$. In particular, there is  $\delta_{2}<\delta_{1}$ such that  
\begin{equation}\label{Eq:gordo2}
|\varepsilon_{t}(s)|<1/2, \quad \textrm{ for all } t<\delta_{2} \ \textrm{ and } s\in \mathbb S.
\end{equation}

Our goal is to show that whenever $t$ is small enough, it holds $k_t=0$. This would prove that $\Phi_t(s)\to s$ uniformly on the vertical strip $\mathbb S(\varepsilon,\eta)$. To establish this, firstly  we have to prove two statements. The first one claims that $k_{t+\tau}=k_t+k_{\tau}$ whenever $t,\tau\in[0,\delta_{3})$ for some $ 0<\delta_{3}<\delta_{2}/2$. The second one states that, for every $k\in\Z$, the sets
\[
F_k=
\{t\in [0,\delta_{3}): k_t=k\}
\]
are closed. Let us see first how the conclusion follows from these two claims. The family $\{F_k\}_{k\in \Z}$ is a countable covering of the interval $[0,\delta_{3})$ which, clearly, has non-empty interior. If each $F_k$ is closed, then, Baire's category theorem guarantees the existence of a $\tilde k$ such that $F_{\tilde k}$ has non-empty interior. Thus, there exist $t\in F_{\tilde k}$ and  $\delta_{4}>0$ such that $[t,t+\delta_{4}]\subset F_{\tilde k}$. Therefore, thanks to the first claim, 
\[
\tilde k=k_{t+\delta'}=k_{t}+k_{\delta'}, \quad\text{for every $\delta'\in[0,\delta_{4}]$}.
\]
Since $t\in F_{\tilde k }$, we have that $k_{\delta'}=0$ for every $\delta'\in[0,\delta_{4}]$. That is,  $k_t=0$ for every $t\in[0,\delta_{4})$, as desired. This  shows that $a)$ implies $b)$ since given a compact set  in $\C_{+}$ there are $0<\varepsilon<\eta$ such that  the strip $\mathbb S(\varepsilon,\eta)$ contains the compact set.

  Let us prove the first claim. Using the semigroup structure we find that
\begin{equation}\label{Eq:gordo3}
\begin{split}
   0&= \Phi_t(\Phi_{\tau}(s))-\Phi_{t+\tau}(s)\\
   &=\Phi_{\tau}(s)-\frac{2k_{t}\pi i}{\ln 2}-\varepsilon_t(\Phi_{\tau}(s))-(s-\frac{2k_{t+\tau}\pi i}{\ln 2}-\varepsilon_{t+\tau}(s))\\
   &=\frac{2\pi i}{\ln 2}(k_{t+\tau}-k_{t}-k_{\tau})-\varepsilon_{\tau}(s)+\varepsilon_{t+\tau}(s)-\varepsilon_t(\Phi_{\tau}(s)).
   \end{split}
\end{equation} 
Take $s_{0}=(\varepsilon+\eta)/2$ and $t<\delta _{1}$, by \eqref{Eq:gordo1}, 
$$
\varepsilon< s_{0}\leq \Re \Phi_{t}(s_{0})\leq s_{0}+\frac{\pi}{\ln 2}=\frac{\varepsilon+\eta}{2}+\frac{\pi}{\ln 2}<\eta ,
$$
where we have used that $\varepsilon+\frac{2\pi}{\ln 2}<\eta $.
 Therefore, for $t$ and $\tau$ small enough such that $t+\tau<\delta_{2}$, by \eqref{Eq:gordo2}, we have 
  $$\max\{ |\varepsilon_{\tau}(s_{0})|,|\varepsilon_{t+\tau}(s_{0})|,|\varepsilon_t(\Phi_{\tau}(s_{0}))|\}<\frac{1}{2}.$$
Since $k_{t}$, $k_{\tau}$, and $k_{t+\tau}$ are integers, equality \eqref{Eq:gordo3} shows that $k_{t}+k_{\tau}-k_{t+\tau}=0$.

We move onto the second claim. Let $\{t_n\}$ be a sequence in $F_k$ such that $t_n\to b$ as $n\to\infty$. Without loss of generality, we may assume that $t_n\to b^{-}$. Since $T_{t}$ is strongly continuous on $\mathcal H^{2}$, we have that $m^{-\Phi_{t_{n}}}$ converges to $m^{-\Phi_{b}}$ on $\mathcal H^{2}$ for every $m\in \N$ (see \cite[Page 3, Proposition 1.3]{engels}).  Arguing as the beginning of this proof, we have 
\begin{align}\label{conclusion}
 m^{\Phi_b(s)-\Phi_{t_n}(s)}\to1,\quad \text{as  } n\to\infty
\end{align}
uniformly on $\mathbb S$. 
 For every $n\geq1$,
$$
\Phi_b(s)-\Phi_{t_n}(s)=\varepsilon_{t_{n}}(s)-\varepsilon_{b}(s)+\frac{2\pi i}{\ln 2} (k-k_{b}).
$$
Up to taking a subsequence (that we still denote $\{t_{n}\}$), by  Montel's Theorem,  we may assume that the sequence $\{\varepsilon_{t_{n}}-\varepsilon_{b}\}$ converges uniformly on compact sets of $\mathbb S$ to a function $g$. Moreover,
$$
2^{g(s)+\frac{2\pi i}{\ln 2} (k-k_{b})}=1.
$$
Thus there is $\alpha$ such that $g(s)=\alpha$ for all $s$. Using again  \eqref{Eq:gordo2}, we have that $|\alpha|\leq 1$. Moreover, by \eqref{conclusion}, for each $m$ there is $q_{m}\in \Z$ such that 
$$
\alpha+\frac{2\pi i}{\ln 2} (k-k_{b})= \frac{2\pi i}{\ln m} q_{m}.
$$
This implies that $\frac{q_{m}}{\ln m}$ is constant. But this can happen only if $q_{m}=0$ for all $m$ (otherwise, we would get that $\frac{\ln 2}{\ln 3}$ is a rational number). Thus, $\alpha+\frac{2\pi i}{\ln 2} (k-k_{b})=0$. Since $|\alpha|\leq 1$, we get that $k=k_{b}$ and $b\in F_{k}$.

With this we finish the proof of $a)$ implies $b)$. Nevertheless we notice that we have proved that $a)$ implies something stronger than $b)$. Namely, that $\Phi_{t}$ converges to the identity map, as $t$ goes to $0$,  uniformly in vertical strips of $\C_{\varepsilon}$ for every $\varepsilon>0$. We will use this fact later on in the proof of $a)$ implies $c)$.

We now prove $b)$ implies $a)$. 
Using \cite[Theorem 1.6]{engels}, we know that the strong continuity of the semigroup $\{T_t\}_{t\geq0}$ is equivalent to the continuity in the weak operator topology. Therefore, this is what we are going to prove. Let $f\in\mathcal{H}^2$. By Gordon and Hedenmalm's Theorem \ref{gordon}, since $\Phi_t\in\mathcal{G}$ for every $t\geq0$, we know that each $\Phi_t$ defines a bounded composition operator in the Hilbert space $\mathcal{H}^2$. This fact together with Remark \ref{norm}  yield
\[
\|C_{\Phi_t}f\|_{\mathcal{H}^2}=\|f\circ\varphi_t\|_{\mathcal{H}^2}\leq\|C_{\varphi}\|\|f\|_{\mathcal{H}^2}\leq\|f\|_{\mathcal{H}^2}.
\]
Therefore, the set $\{f\circ\Phi_t\}_{t\geq0}$ is bounded in $\mathcal{H}^2$. This guarantees the existence of a subsequence $\{t_k\}_{k\in\N}$, $t_k\to0$ as $k\to\infty$ and $g\in\mathcal{H}^2$, such that
\[
f\circ\Phi_{t_k}\xrightarrow{\enskip w\enskip} g.
\]
Now, given that the pointwise evaluation functional $\delta_s$ is bounded for $s$ in $\C_{1/2}$ (see Lemma \ref{pointwiseevaluaction}), the weak convergence implies that
 \[
 f\circ\Phi_{t_k}(s)\to g(s),\quad k\to\infty,
 \]
 for every $s\in \C_{1/2}$. On the other hand,  $f\circ\Phi_{t}(s)\to f(s)$, as $t$ goes to $0$, for every $s\in \C_{1/2}$. This  forces $f(s)=g(s)$ for every $s\in\C_{1/2}$, so $T_tf\xrightarrow{w} f$ as $t\to0$. By \cite[Theorem 1.6]{engels},  the claim is proven.

By the definition of continuous semigroup, $c)$ implies $b)$ is obvious so that it remains to show that $a)$ implies $c)$. We already  know that $a)$ is equivalent to $b)$. Then, by Proposition \ref{cste1}, we have that $c_t=1$. Moreover,  this implies that $\Phi_t(s)\to s$ as $t\to0^+$, uniformly in vertical strips of $\C_{\varepsilon}$. This already implies $c)$. Indeed, since $\Phi_t\in\mathcal{G}$ and $c_t=1$ for every $t>0$, $\Phi_t(s)=s+\varphi_t(s)$. Therefore, $\varphi_t\to0$ as $t\to0^+$ uniformly on vertical strips of $\C_{\varepsilon}$ for every $\varepsilon>0$.
The Dirichlet series  $\varphi_{t}$ sends $\C_{+}$ into its closure. Thus by Theorem \ref{queffelecs}, it is bounded on  $\C_{\varepsilon}$ for every $\varepsilon>0$. Therefore, its supremum on $\C_{\varepsilon}$ coincides with the supremum in the vertical strip $\varepsilon<\text{Re}(s)<\sigma_0$ for any $\sigma_{0}>\varepsilon$. Thus, $\|\varphi_{t}\|_{\mathcal H^{\infty}(\C_{\varepsilon})}$ tends to zero as $t$ goes to $0$, and we are done.
\end{proof}

\begin{remark}
Let us see how the semigroup structure is essential for $a)$ implies $b)$. Take a sequence $\{x_n\}_n$ of real numbers such that $|x_n|\to\infty$ when $n\to\infty$ and  $m^{-ix_n}\to1$ as $n\to\infty$ for all natural number $m$. 
The existence of such sequence $\{x_{n}\}$ is guaranteed by Kronecker's Lemma. Indeed, if $\{p_{j}\}$ is the sequence of prime numbers, it is enough to see that $p_{j}^{-ix_n}\to1$ as $n\to\infty$ for all $j$. By Kronecker's Lemma, see \cite[Proposition 3.4]{peris}, for each $n\in \N$, the set 
$$
\{(p_{1}^{-ix},p_{2}^{-ix}, \dots, p_{n}^{-ix}): \  x\in \R\}
$$ is dense in $\T^{n}$. Since the set $
\{(p_{1}^{-ix},p_{2}^{-ix}, \dots, p_{n}^{-ix}): \  x\in [-n,n]\}
$ is compact with empty interior (whenever $n\geq 2$), we have that $$
\{(p_{1}^{-ix},p_{2}^{-ix}, \dots, p_{n}^{-ix}): \  |x|\geq n\}
$$ is also dense in $\T^{n}$. Thus, we can find $x_{n}$ such that $|x_{n}|\geq n$ and
$$
|p_{j}^{-ix_{n}}-1|\leq \frac{1}{n}
$$
for $j=1, \dots, n$.

Now define $T_nf=f\circ \Phi_{n}$, where $\Phi_{n}(s)=s+ix_n$ for $s\in \C_{+}$. Then, $T_nf\to f$ in $\mathcal{H}^2$ as $n\to\infty$ because if $f(s)=\sum_{m=1}^{\infty}a_{m}m^{-s}\in \mathcal H^{2}$,  using the Dominated Convergence Theorem,  we obtain
$$
\lim_{n\to \infty}\|f-f\circ \Phi_{n}\|_{\mathcal H^2}^{2}=\lim_{n\to\infty}\sum_{m=1}^{\infty} |a_{m}|^{2}|1-m^{-ix_{n}}|^{2}=0.
$$
However, by the definition of $\{x_n\}_n$, $\Phi_n$ does not converges to the identity map  as $n\to\infty$. 

\end{remark}
Theorem \ref{gordo1} still holds for $1\leq p<\infty$.

\begin{theorem}\label{gordo11}
Let $1\leq p<\infty$. Let $\{\Phi_t\}_{t\geq0}$ be a semigroup of analytic functions, such that $\Phi_t\in\mathcal{G}$ for every $t>0$ and denote by $T_{t}$ the composition operator $T_t(f)=f\circ\Phi_t$. Then, the following assertions are equivalent:
\begin{enumerate}[a)]
    \item $\{T_t\}_{t\geq0}$ is a strongly continuous semigroup in $\mathcal{H}^p$.
    \item $\{\Phi_t\}_{t\geq0}$ is a continuous semigroup.
    \item $\Phi_t(s)\to s$, as $t$ goes to $0$,  uniformly in $\C_{\varepsilon}$, for every $\varepsilon>0$.
\end{enumerate}
\end{theorem}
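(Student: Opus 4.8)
The plan is to follow closely the proof of Theorem~\ref{gordo1}, observing that three of the four implications are essentially insensitive to the value of $p$, while the implication $b)\Rightarrow a)$ must be argued differently: the weak-compactness step used for $p=2$ relies on the reflexivity of $\mathcal{H}^2$, and this fails for $p=1$.

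First I would dispose of the $p$-independent implications. For $a)\Rightarrow b)$ the argument in Theorem~\ref{gordo1} goes through verbatim once $\mathcal{H}^2$ is replaced by $\mathcal{H}^p$: starting from $f(s)=m^{-s}$ with $m\geq 2$, the hypothesis $a)$ gives $g_t=m^{-\Phi_t}\to m^{-s}$ in $\mathcal{H}^p$, and since the evaluation functionals $\delta_s$ are bounded on $\mathcal{H}^p$ for $\Re s>1/2$ (see \eqref{normevaluationfunciotnal}), this forces pointwise convergence on $\C_{1/2}$; combined with the uniform bound $\|g_t\|_{\mathcal{H}^{\infty}}\leq 1$ and Bayart's Theorem~\ref{bayarto}, one gets $m^{-\Phi_t}\to m^{-s}$ uniformly on every half-plane $\C_{\varepsilon}$. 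The subsequent Baire-category argument with the integers $k_t$ and the branches of $2^{w}$ does not involve $p$ at all, and it yields the stronger conclusion that $\Phi_t\to s$ uniformly on vertical strips of each $\C_{\varepsilon}$. From here $a)\Rightarrow c)$ is obtained exactly as in Theorem~\ref{gordo1}: by the equivalence $a)\Leftrightarrow b)$ and Proposition~\ref{cste1} we have $c_t=1$, so $\Phi_t(s)=s+\varphi_t(s)$ with $\varphi_t\to 0$ uniformly on strips; since $\varphi_t$ is a Dirichlet series bounded on $\C_{\varepsilon}$ (Theorem~\ref{queffelecs}), its supremum on $\C_{\varepsilon}$ is already attained on such a strip, whence $\|\varphi_t\|_{\mathcal{H}^{\infty}(\C_{\varepsilon})}\to 0$. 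Finally $c)\Rightarrow b)$ is immediate, as uniform convergence on every $\C_{\varepsilon}$ implies uniform convergence on compact subsets of $\C_{+}$.

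It remains to prove $b)\Rightarrow a)$, which is the crux. Assuming $b)$, Proposition~\ref{cste1} gives $c_t=1$, so by Theorem~\ref{hp}(b) together with Remark~\ref{norm} each $T_t$ is a contraction on $\mathcal{H}^p$, that is $\|T_t\|\leq 1$ uniformly in $t$. By Theorem~\ref{gordo1} the hypothesis $b)$ already implies that $\{T_t\}$ is strongly continuous on $\mathcal{H}^2$; in particular, for every Dirichlet polynomial $g$ we have $\|T_tg-g\|_{\mathcal{H}^2}\to 0$ as $t\to 0^{+}$. Now $h_t:=T_tg-g$ lies in $\mathcal{H}^{\infty}$ with $\|h_t\|_{\mathcal{H}^{\infty}}$ bounded uniformly in $t$ (since $g$ is a fixed polynomial and $|n^{-\Phi_t(s)}|\leq 1$ on $\C_{+}$), so writing the $\mathcal{H}^p$ norm as the $L^p(\T^{\infty})$ norm of the Bohr lift and interpolating on the probability space $\T^{\infty}$ between $L^2$ and $L^{\infty}$ (for $p\geq 2$ use $\|h_t\|_{\mathcal{H}^p}\leq \|h_t\|_{\mathcal{H}^2}^{2/p}\,\|h_t\|_{\mathcal{H}^{\infty}}^{1-2/p}$, and for $1\leq p\leq 2$ use the inclusion $\|h_t\|_{\mathcal{H}^p}\leq\|h_t\|_{\mathcal{H}^2}$) one concludes $\|T_tg-g\|_{\mathcal{H}^p}\to 0$ for every $1\leq p<\infty$. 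Since the Dirichlet polynomials are dense in $\mathcal{H}^p$ and $\|T_t\|\leq 1$, a standard $\varepsilon/3$ argument propagates this to every $f\in\mathcal{H}^p$, giving the strong continuity of $\{T_t\}$.

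The main obstacle is precisely the case $p=1$, where the reflexivity of $\mathcal{H}^p$ underpinning the weak-limit extraction in Theorem~\ref{gordo1} is unavailable. The device that circumvents it is to transfer the convergence from $\mathcal{H}^2$ (where Theorem~\ref{gordo1} applies) to the dense set of Dirichlet polynomials, and then to propagate it to all of $\mathcal{H}^p$ using only the uniform contractivity $\|T_t\|\leq 1$; the interpolation step is what makes the single piece of $\mathcal{H}^2$ information usable simultaneously for all finite $p$.
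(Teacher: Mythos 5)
Your proof is correct, and its skeleton matches the paper's: the implications $a)\Rightarrow b)$, $a)\Rightarrow c)$ and $c)\Rightarrow b)$ are obtained, as in the paper, by observing that the proof of Theorem \ref{gordo1} only uses the functions $m^{-s}$ (which lie in every $\mathcal{H}^p$), the bounded point evaluations, and strong continuity in the given $\mathcal{H}^p$. Where you genuinely diverge is in $b)\Rightarrow a)$. The paper splits the range: for $1<p<\infty$ it re-runs the weak-compactness argument of Theorem \ref{gordo1} verbatim, relying on the reflexivity of $\mathcal{H}^p$ together with the Engel--Nagel equivalence of weak and strong operator continuity, and only for $p=1$, where reflexivity fails, does it transfer the $\mathcal{H}^2$-continuity via the density of $\mathcal{H}^2$ in $\mathcal{H}^1$, the inequality $\|\cdot\|_{\mathcal{H}^1}\le\|\cdot\|_{\mathcal{H}^2}$, and the uniform contractivity $\|T_t\|\le1$ (Proposition \ref{cste1}, Theorem \ref{hp} and Remark \ref{norm}). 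You instead give a single argument covering all $p\in[1,\infty)$: strong continuity on $\mathcal{H}^2$ from Theorem \ref{gordo1}, the uniform bound $\|T_tg-g\|_{\mathcal{H}^{\infty}}\le 2\sum_{n\le N}|a_n|$ for a fixed Dirichlet polynomial $g$ (legitimate since $\Re\Phi_t>0$ gives $|n^{-\Phi_t}|\le 1$ and $T_tg\in\mathcal{D}$ by Theorem \ref{menudeo}), and H\"older's inequality on the probability space $\T^{\infty}$, $\|h\|_{\mathcal{H}^p}\le\|h\|_{\mathcal{H}^2}^{2/p}\|h\|_{\mathcal{H}^{\infty}}^{1-2/p}$ for $p\ge 2$, followed by the same density-plus-contraction $\varepsilon/3$ scheme. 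This buys uniformity in $p$ and avoids reflexivity altogether, at the price of two standard facts the paper's version does not need: that the Bohr lift is isometric at the level $p=\infty$, so that $\|\mathcal{B}h\|_{L^{\infty}(\T^{\infty})}\le\|h\|_{\mathcal{H}^{\infty}}$ (without this the interpolation step for $p>2$ would not be justified), and that the Dirichlet polynomials are dense in $\mathcal{H}^p$ for every finite $p$ (the paper only needs $\mathcal{H}^2$ dense in $\mathcal{H}^1$). Both are classical (Hedenmalm--Lindqvist--Seip, Bayart), so your route is sound; for $1\le p\le 2$ it essentially reproduces the paper's $p=1$ device, while for $p>2$ the $L^2$--$L^{\infty}$ interpolation is a genuinely different, and arguably more elementary, substitute for the reflexivity argument.
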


\begin{proof} Bearing in mind Theorem \ref{gordo1}, we only have to prove the equivalence between $a)$ and $b)$.

Consider a continuous semigroup of analytic functions $\{\Phi_t\}_{t>0}$ in the half-plane $\C_+$, whose elements are in the class $\mathcal{G}$. We know that, necessarily, $c_t=1$ for every $t$. Now, Bayart's Theorem \ref{hp} guarantees the boundedness of the composition operator $T_{t}$ for every $t\geq 0$ in $\mathcal{H}^p$. Not only that, but the operator is a contraction (see Remark \ref{norm}). Bearing in mind these facts, the proof of this implication can be adapted from the one of Theorem \ref{gordo1} except for the case $p=1$, since $\mathcal{H}^1$ is not reflexive. To solve this little inconvenience, we use the density of $\mathcal{H}^2$ in $\mathcal{H}^1$ with respect to the $\mathcal{H}^1$ norm. Indeed, by the $\mathcal{H}^p$ spaces inclusion, we have that 
\[
\|f\circ{\Phi_t}-f\|_{\mathcal{H}^1}\to0,\quad \text{as  } t\to0^+
\]
for every $f\in\mathcal{H}^2$. Now, given $f\in\mathcal{H}^1$ and $\varepsilon>0$, there is $g\in\mathcal{H}^2$ such that
$\|f-g\|_{\mathcal{H}^1}<\varepsilon/2$. Therefore, using Remark \ref{norm}, we have
$$
\|T_{t}(f)-f\|_{\mathcal{H}^1}\leq \|T_{t}(f)-T_{t}(g)\|_{\mathcal{H}^1}+\|T_{t}(g)-g\|_{\mathcal{H}^1}+\|g-f\|_{\mathcal{H}^1}\leq \varepsilon+\|T_{t}(g)-g\|_{\mathcal{H}^1}.
$$
Thus, $\limsup_{t\to 0}\|T_{t}(f)-f\|_{\mathcal{H}^1}\leq\varepsilon$. The arbitrariness of $\varepsilon$ shows that $\lim_{t\to 0}\|T_{t}(f)-f\|_{\mathcal{H}^1}=0$.

Reciprocally, given a semigroup of elements in the class $\mathcal{G}$, if we have a strongly continuous semigroup of composition operators from $\mathcal{H}^p$ into $\mathcal{H}^p$,  the proof of $a)$ implies $b)$ in Theorem \ref{gordo1} still holds for this range of $p$. Indeed, the proof only requires to consider the Dirichlet series $m^{-s}$, with $m\in \N$, which belongs to any $\mathcal{H}^p$ and the strong continuity of the operator semigroup in some $\mathcal{H}^p$. 
\end{proof}
\begin{remark}
For the case $p=\infty$, the previous theorem is no longer true. The problem for the equivalence lies on the implication ``$b)\Rightarrow a)$''. As we shall see, the only strongly continuous semigroup of composition operators $\{T_t\}_{t\geq0}$ is the trivial one (see Theorem \ref{gordoinfty}). 
\end{remark}

\section{The infinitesimal generator} \label{sec:infgenerator}   

In the study of both semigroups of operators and of holomorphic functions, the infinitesimal generators play a fundamental role. See, i.e.,\cite[Chapter 10]{manoloal} for the case of holomorphic semigroups. Regarding semigroups of operators we refer the reader either to \cite[Chapter II]{engels} or \cite[Chapter 13]{rudin}. The aim of this section is to characterize the infinitesimal generators of continuous semigroups in the Gordon-Hedenmalm class. As a byproduct, we will describe the infinitesimal generator of a strongly continuous semigroup of composition operators in Hardy spaces of Dirichlet series.  

Let us recall that given  a Banach space $X$ and $\{T_t\}_{t\geq0}$  an operator semigroup where $T_t:X\to X$, the infinitesimal generator of the semigroup is defined as
\begin{equation}\label{limiton}
    Af=\lim_{t\to0^+}\frac{T_tf-f}{t},
\end{equation}
where the convergence is considered in the norm topology. We denote by $D(A)$ the set of all $f\in X$ such that the limit \eqref{limiton} exists. 

A classical result from general semigroup theory guarantees that if the semigroup of composition operators $\{T_t\}_{t\geq0}$ is strongly continuous, then $D(A)$ is dense in the space $X$ (see, i.e. \cite[Page 37, Theorem 1.4]{engels}). 

\subsection{Infinitesimal generator of a semigroup of holomorphic functions in $\mathcal G$}
A remarkable result of Berkson and Porta \cite{porta} asserts that
each continuous semigroup of holomorphic self-maps of $\C_{+}$ is
locally uniformly differentiable with respect to the parameter
$t\geq 0.$  That is, there exists
\begin{equation}\label{stress}
H(s)=\lim_{t\to0^+}\frac{\Phi_t(s)-s}{t}, \qquad \textrm{ for all } s\in \C_{+}
 \end{equation}
and such limit is uniform on compact sets of $\C_{+}$. In particular, $H$ is holomorphic.
Moreover, $\Phi _{t}$ is the solution of the Cauchy problem:
\begin{equation}\label{cauchy}
\frac{\partial \Phi _{t}(s)}{\partial t}=H(\Phi _{t}(s))\quad
\mbox{and} \quad \Phi _{0}(s)= s\in \C_{+}.
\end{equation}
The function $H$ is called the  {\sl infinitesimal generator} of
the semigroup $\left\{ \Phi _{t}\right\} .$ 
In fact, in \cite[Theorem 2.6]{porta}, it is proved that $H$ is the infinitesimal generator of a continuous semigroup of analytic functions with Denjoy-Wolff point $\infty$ if and only if $H(\C_{+})\subset\overline{ \C_{+}}$. 

Let us recall that  the Denjoy-Wolff point of  a semigroup in the Gordon-Hedenmalm class is $\infty$. Thus, its infinitesimal generator is a holomorphic function sending the right half-plane into its closure. Clearly the converse of this assertion does not hold.  The main result  of this section is the following  characterization of the infinitesimal generators of continuous semigroups in the Gordon-Hedenmalm class.

\begin{theorem}\label{gcoro}
Let $H:\C_+\to\overline{\C}_+$ be analytic. Then, the following statements are equivalent:
\begin{enumerate}[a)]
    \item $H$ is the infinitesimal generator of a continuous semigroup of elements in the class $\mathcal{G}$.
    \item $H\in\mathcal{H}^{\infty}(\C_{\varepsilon}),$ for all $\varepsilon>0$.
    \item $H\in\mathcal{D}$.
\end{enumerate}
\end{theorem}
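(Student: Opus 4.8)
The plan is to prove the three equivalences $a)\Leftrightarrow b)\Leftrightarrow c)$ by exploiting the characterization of continuous semigroups in $\mathcal{G}$ already established, namely that every such semigroup has characteristic $c_t=1$ (Proposition \ref{cste1}) and Denjoy-Wolff point $\infty$. The implication $c)\Rightarrow b)$ is the cheapest one: if $H$ is a Dirichlet series that maps $\C_+$ into $\overline{\C_+}$, then by Theorem \ref{queffelecs} we have $\sigma_u(H)\leq 0$, so $H$ is bounded on every half-plane $\C_\varepsilon$; this is exactly $b)$.

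\textbf{The implication $b)\Rightarrow a)$.} Suppose $H\in\mathcal{H}^\infty(\C_\varepsilon)$ for all $\varepsilon>0$ and $H(\C_+)\subset\overline{\C_+}$. Since $H$ maps $\C_+$ into $\overline{\C_+}$, the Berkson-Porta theorem \cite[Theorem 2.6]{porta} guarantees that $H$ generates a continuous semigroup $\{\Phi_t\}$ of holomorphic self-maps of $\C_+$ with Denjoy-Wolff point $\infty$, solving the Cauchy problem \eqref{cauchy}. The real work is to show each $\Phi_t$ lies in $\mathcal{G}$. The strategy I would follow is to integrate the differential equation and control $\Phi_t(s)-s$ using the boundedness of $H$ on half-planes: since $\Re\Phi_t(s)\geq \Re s$ (the orbit cannot decrease the real part, as the DW point is $\infty$), the trajectory stays inside $\C_\varepsilon$ whenever it starts there, so $H(\Phi_\tau(s))$ stays bounded by $\|H\|_{\mathcal{H}^\infty(\C_\varepsilon)}$ along the orbit. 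Integrating \eqref{cauchy} gives
\begin{equation*}
\Phi_t(s)-s=\int_0^t H(\Phi_\tau(s))\,d\tau,
\end{equation*}
so $\varphi_t(s):=\Phi_t(s)-s$ is bounded on each $\C_\varepsilon$ by $t\|H\|_{\mathcal{H}^\infty(\C_\varepsilon)}$. It then remains to argue that $\varphi_t$ is a Dirichlet series; here I would use that $\Phi_t$ is a bounded analytic perturbation of the identity mapping $\C_\varepsilon$ into $\overline{\C_\varepsilon}$, and invoke Theorem \ref{queffelecs} together with the fact that $\varphi_t$ arises as a locally uniform limit (via the integral, or via iteration of the exponential structure), establishing $\varphi_t\in\mathcal{D}$. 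Because $c_{\Phi_t}=1$, the condition for membership in $\mathcal{G}$ when the characteristic is nonzero is automatic, so $\Phi_t\in\mathcal{G}$.

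\textbf{The implication $a)\Rightarrow c)$.} Assume $H$ generates a continuous semigroup $\{\Phi_t\}$ in $\mathcal{G}$. By Proposition \ref{cste1}, $c_t=1$, so $\Phi_t(s)=s+\varphi_t(s)$ with $\varphi_t\in\mathcal{D}$. The definition \eqref{stress} gives
\begin{equation*}
H(s)=\lim_{t\to0^+}\frac{\Phi_t(s)-s}{t}=\lim_{t\to0^+}\frac{\varphi_t(s)}{t}
\end{equation*}
uniformly on compact subsets of $\C_+$. The plan is to show that this limit of Dirichlet series is itself a Dirichlet series. The natural route is to fix a half-plane $\C_\sigma$ with $\sigma>0$ large, compute the Dirichlet coefficients of $\varphi_t$ by the standard Bohr/averaging formula (recovering $a_n(t)$ as limits of $\frac{1}{2T}\int_{-T}^T \varphi_t(\sigma+iy)\,n^{\sigma+iy}\,dy$), and show that $a_n(t)/t$ converges as $t\to0$ to coefficients defining a Dirichlet series representing $H$ on $\C_\sigma$. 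Equivalently, since $H$ is holomorphic on $\C_+$ and is a locally uniform limit of the Dirichlet series $\varphi_t/t$, one argues that $H$ itself admits a Dirichlet series expansion convergent in some half-plane, which is $c)$.

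\textbf{Main obstacle.} I expect the hard part to be the passage from "$H$ is a locally uniform limit of Dirichlet series" to "$H$ is itself a Dirichlet series," i.e., establishing $a)\Rightarrow c)$ rigorously. Locally uniform convergence on compact sets of $\C_+$ does not immediately transfer the Dirichlet-series structure, because the individual coefficients need not converge without uniform control on half-planes. The resolution should come from first proving $a)\Rightarrow b)$ (boundedness of $H$ on each $\C_\varepsilon$), which likely follows from the integral representation above run backwards, and then invoking Theorem \ref{queffelecs}: an analytic function on $\C_\sigma$ that is bounded there and is known to be expressible as a Dirichlet series in \emph{some} half-plane automatically has $\sigma_u\leq\sigma$. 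Thus the genuinely delicate step is showing that the generator $H$, a priori only a pointwise/locally uniform limit, does possess a Dirichlet series representation in some half-plane at all; once that is secured, Theorem \ref{queffelecs} upgrades it to boundedness on every $\C_\varepsilon$ and closes the cycle of implications.
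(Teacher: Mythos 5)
Your skeleton (the cycle of implications, $c)\Rightarrow b)$ via Theorem \ref{queffelecs}, Berkson--Porta for existence of the semigroup, the integral bound $|\Phi_t(s)-s|\leq t\|H\|_{\mathcal{H}^{\infty}(\C_{\varepsilon})}$ on $\C_{\varepsilon}$ using forward invariance of half-planes) matches the paper's, but at both genuinely hard points the proposal has a gap rather than a proof. In $b)\Rightarrow a)$ you must show that $\varphi_t=\Phi_t-\mathrm{id}$ is a \emph{Dirichlet series}, and boundedness on half-planes gives you nothing here: Theorem \ref{queffelecs} takes as a hypothesis that the function is already expressible as a Dirichlet series in some half-plane, so invoking it ``together with the fact that $\varphi_t$ arises as a locally uniform limit'' is circular --- locally uniform limits of Dirichlet series need not be Dirichlet series, as you yourself observe later. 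The paper closes this with a Picard--Lindel\"of scheme (Proposition \ref{gordo4} and Theorem \ref{gordo3}): one builds the complete metric space $X$ of continuous $f(s,t)$ with $f(\cdot,t)-\mathrm{id}\in\mathcal{H}^{\infty}(\C_{1+\sigma})$, checks that $Tf(s,t)=s+\int_0^t H(f(s,\tau))\,d\tau$ preserves this Dirichlet structure (this is where Theorem \ref{menudeo} and the Banach algebra property enter, via $n^{-f}=\exp(-f\log n)$) and is a contraction for small $\delta$, then identifies the fixed point with $\Phi_t$ by uniqueness for the Cauchy problem \eqref{cauchy}, and finally extends $\Phi_t\in\mathcal{G}$ from $t<\delta$ to all $t$ using the semigroup law and Proposition \ref{colorario}. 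None of this mechanism appears in your sketch, and the integral representation alone cannot produce it.

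Your proposed repair of $a)\Rightarrow c)$ is also circular: you suggest deducing boundedness of $H$ on each $\C_{\varepsilon}$ from ``the integral representation above run backwards,'' but that representation bounds $\varphi_t$ in terms of $\sup|H|$ along orbits, which is precisely what is to be proved; and positivity of $\Re H$ alone cannot yield boundedness (the map $H(s)=s$ sends $\C_+$ into itself, is unbounded on every $\C_{\varepsilon}$, and generates $\Phi_t(s)=e^{t}s\notin\mathcal{G}$). The paper's actual route (Theorem \ref{gordo2}) is functional-analytic: by Theorem \ref{gordo1} the semigroup $T_t=C_{\Phi_t}$ is strongly continuous on $\mathcal{H}^2$, so $D(A)$ is dense; one picks $f\in D(A)$ close in $\mathcal{H}^2$ to $2^{-s}$ so that $f'$ is bounded below on a vertical strip, and writes
\begin{equation*}
\frac{\varphi_t(s)}{t}=\frac{f(\Phi_t(s))-f(s)}{t}\cdot\frac{1}{F(\Phi_t(s),s)},
\end{equation*}
with $F$ the difference quotient of Lemma \ref{uniforme}; convergence of $\frac{T_tf-f}{t}$ to $Af$ in $\mathcal{H}^2$, together with the uniform boundedness of point evaluations (Lemma \ref{pointwiseevaluaction}) and the uniform convergence $\Phi_t\to\mathrm{id}$ on strips from Theorem \ref{gordo1}, yields \emph{uniform} convergence of $\varphi_t/t$ to $H$ on a strip and hence on $\C_{1/2+\sigma}$, so $H$ is a uniform limit of bounded Dirichlet series and therefore belongs to $\mathcal{H}^{\infty}(\C_{1/2+\sigma})$. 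Your coefficient-averaging plan (essentially Lemma \ref{hache} of the paper) only works \emph{after} this uniform convergence on strips is secured, and the paper uses it there for a different purpose (identifying the coefficients of $H$, not proving it is a Dirichlet series). So the proposal correctly locates the difficulty but leaves both hard implications unproven.
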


The proof of this result will be given at the end of this subsection (see page \pageref{proofgcoro}). In fact, it will be an easy consequence of some more general results.

\begin{theorem}\label{gordo2}
Let $\{\Phi_t\}_{t\geq0}$ be a continuous semigroup of analytic functions in $\C_+$ such that $\Phi_t\in\mathcal{G}$ for every $t>0$ and $H$ its infinitesimal generator. Then, $H\in\mathcal{H}^{\infty}(\C_{1/2+\sigma})$, for every $\sigma>0$.
In addition, if  $\Phi_t(s)=s+\varphi_{t}(s)$, $\Lambda$ is a multiplicative semigropup of $\N$,   and $\varphi_t\in\mathcal{D}_{\Lambda}$ for all $t$, then  $H\in\mathcal{H}^{\infty}_{\Lambda}(\C_{1/2+\sigma})$.
\end{theorem}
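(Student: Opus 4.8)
The plan is to transport the statement to the operator semigroup on $\mathcal{H}^2$ and to read off the regularity of $H$ from the way the difference quotients act on the monomials $m^{-s}$. By Theorem \ref{gordo1} the continuity of $\{\Phi_t\}$ makes $\{T_t\}$ a strongly continuous semigroup of contractions on $\mathcal{H}^2$ (Remark \ref{norm}), and by Proposition \ref{cste1} we have $c_t=1$, so $\Phi_t(s)=s+\varphi_t(s)$ with $\varphi_t\in\mathcal{D}$ and $\Re\varphi_t\ge 0$. For a fixed integer $m\ge 2$ I would study $u_t^{(m)}:=\frac1t\bigl(T_t(m^{-s})-m^{-s}\bigr)\in\mathcal{H}^2$. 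Since $\frac{m^{-\Phi_t(s)}-m^{-s}}{t}=m^{-s}\,\frac{m^{-\varphi_t(s)}-1}{t}$ and $\frac{m^{-w}-1}{w}\to-\log m$ as $w\to0$, the Berkson--Porta limit \eqref{stress} gives that $u_t^{(m)}$ converges \emph{pointwise} on $\C_+$ to $-\log m\,\cdot m^{-s}H$.

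The heart of the matter is to prove that $\{u_t^{(m)}\}_{0<t<1}$ is \emph{bounded} in $\mathcal{H}^2$. Because $\Re\varphi_t\ge 0$, the function $m^{-\varphi_t}=\exp(-\log m\,\varphi_t)$ lies in the unit ball of $\mathcal{H}^\infty$, so $m^{-\varphi_t}-1\in\mathcal{H}^2$; since multiplication by $m^{-s}$ is an isometry of $\mathcal{H}^2$, one has $\|u_t^{(m)}\|_{\mathcal{H}^2}=\frac1t\|m^{-\varphi_t}-1\|_{\mathcal{H}^2}$, and the task is to show this is $O(1)$, i.e. $\|m^{-\varphi_t}-1\|_{\mathcal{H}^2}=O(t)$. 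I would split off the constant Dirichlet coefficient $a_1(t)$ of $\varphi_t$ using orthogonality, writing $\|m^{-\varphi_t}-1\|_{\mathcal{H}^2}^2=|m^{-a_1(t)}-1|^2+\|m^{-\varphi_t}-m^{-a_1(t)}\|_{\mathcal{H}^2}^2$. For the first term, the relation $\varphi_{t+u}=\varphi_u+\varphi_t\circ\Phi_u$ together with Lemma \ref{lemaso} (letting $\Re s\to+\infty$, so $\varphi_t(\Phi_u(s))\to a_1(t)$) shows that $t\mapsto a_1(t)$ is additive, hence linear, so $a_1(t)=O(t)$ and $|m^{-a_1(t)}-1|=O(t)$. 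The second term is the ``variance'' of $m^{-\varphi_t}$ on $\mathbb{T}^\infty$, and I would estimate it to be $O(t)$ using again $\Re\varphi_t\ge0$ and the uniform convergence $\varphi_t\to0$ on half-planes from Theorem \ref{gordo1}. I expect this quantitative control of the remainder to be the main obstacle. Once boundedness is in hand, reflexivity of $\mathcal{H}^2$ yields a weakly convergent subsequence, whose limit $g\in\mathcal{H}^2$ must agree with the pointwise limit because point evaluations are bounded on $\C_{1/2}$ by \eqref{normevaluationfunciotnal}; thus $m^{-s}H=-\tfrac1{\log m}\,g\in\mathcal{H}^2$ for every $m\ge 2$.

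Next I would recover that $H$ itself is a Dirichlet series by a frequency-matching argument from the cases $m=2$ and $m=3$. Writing $2^{-s}H=\sum_n c_n n^{-s}\in\mathcal{D}$, the identity $3^{-s}(2^{-s}H)=2^{-s}(3^{-s}H)$ presents one analytic function as two ordinary Dirichlet series; since $3n/2\notin\N$ for odd $n$, uniqueness of the exponents $\log k$ forces $c_n=0$ for all odd $n$. Hence $2^{-s}H=\sum_j c_{2j}(2j)^{-s}$, so $H=\sum_j c_{2j}j^{-s}$ is a genuine Dirichlet series with $\|H\|_{\mathcal{H}^2}^2=\sum_j|c_{2j}|^2\le\|2^{-s}H\|_{\mathcal{H}^2}^2<\infty$; that is, $H\in\mathcal{H}^2$, whence $\sigma_a(H)\le 1/2$. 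Finally, by \eqref{normevaluationfunciotnal}, $|H(s)|\le\zeta(2\Re s)^{1/2}\|H\|_{\mathcal{H}^2}\le\zeta(1+2\sigma)^{1/2}\|H\|_{\mathcal{H}^2}$ for every $s\in\C_{1/2+\sigma}$, which is exactly $H\in\mathcal{H}^\infty(\C_{1/2+\sigma})$ for all $\sigma>0$.

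For the refinement under $\varphi_t\in\mathcal{D}_\Lambda$, I would keep track of supports. Because $\mathcal{D}_\Lambda$ is a unital Banach algebra, $m^{-\varphi_t}=\exp(-\log m\,\varphi_t)\in\mathcal{D}_\Lambda$, so its Dirichlet coefficients $\hat d_k(t)$ vanish for $k\notin\Lambda$. The coefficient functionals are continuous under the weak convergence of $u_t^{(2)}$, and matching the coefficient at $(2k)^{-s}$ identifies $-\log 2\,h_k$ as $\lim_t\frac{\hat d_k(t)-\delta_{k,1}}{t}$; hence $h_k=0$ whenever $k\notin\Lambda$, i.e. $H\in\mathcal{H}^2_\Lambda$. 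Combined with the boundedness already established, this gives $H\in\mathcal{H}^\infty_\Lambda(\C_{1/2+\sigma})$ for every $\sigma>0$, the use of Theorem \ref{menudeo} ensuring that all intermediate functions remain within the $\Lambda$-class.
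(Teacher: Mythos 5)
Your central step fails, and not merely for lack of an argument. The boundedness you require---$\|m^{-\varphi_t}-1\|_{\mathcal{H}^2}=O(t)$, equivalently boundedness of $u_t^{(m)}=t^{-1}\bigl(T_t(m^{-s})-m^{-s}\bigr)$ in $\mathcal{H}^2$---would, by your own weak-compactness and point-evaluation argument, force the pointwise limit $-\log m\, m^{-s}H$ to belong to $\mathcal{H}^2$, and then your frequency-matching step would give $H\in\mathcal{H}^2$. This conclusion is strictly stronger than the theorem and is false in general. Take $H(s)=(1-2^{-s})^{-1}=\sum_{k\geq0}2^{-ks}$: it is a convergent Dirichlet series mapping $\C_+$ into $\C_{1/2}\subset\overline{\C_+}$, so by Theorem \ref{gcoro} it is the infinitesimal generator of a continuous semigroup in $\mathcal{G}$ (it is of the form $G(2^{-s})$ with $G(z)=1/(1-z)$, as in the example following Theorem \ref{gordo3}); yet its coefficients are not square-summable, and neither are those of $m^{-s}H=\sum_{k\geq0}(m2^k)^{-s}$ for any $m\geq2$. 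So the ``variance'' estimate you flag as the main obstacle cannot hold for this semigroup: in effect you are trying to prove $m^{-s}\in D(A)$, and by Proposition \ref{Prop:infgen} this fails whenever $m^{-s}H\notin\mathcal{H}^p$. The structural reason is that strong continuity gives only \emph{density} of $D(A)$, never membership of a prescribed function, and the uniform convergence $\varphi_t\to0$ on half-planes $\C_\varepsilon$ (Theorem \ref{gordo1}) says nothing at the abscissa $\Re s=0$, which is where the $\mathcal{H}^2$ norm (the integral over $\T^\infty$) lives.

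The paper's proof is built precisely to avoid this trap: instead of the monomial $2^{-s}$ itself, it takes an arbitrary $f\in D(A)$ with $\|f-2^{-s}\|_{\mathcal{H}^2}<\varepsilon$ (density of the domain), transfers this to uniform closeness of $f,f'$ to $2^{-s}$ and its derivative on a vertical strip inside $\C_{1/2+\sigma/2}$ via Lemma \ref{pointwiseevaluaction}, bounds the divided difference $F(\Phi_t(s),s)$ of Lemma \ref{uniforme} away from zero there, and factors $\varphi_t(s)/t=\frac{f(\Phi_t(s))-f(s)}{t}\cdot\frac{1}{F(\Phi_t(s),s)}$, whose first factor converges uniformly on the strip exactly because $f\in D(A)$. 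The uniform convergence of the Dirichlet series $\varphi_t/t$ to $H$ on a strip then upgrades to the half-plane $\C_{1/2+\sigma}$, since a Dirichlet series bounded on a half-plane has its supremum there attained on any vertical strip (Theorem \ref{queffelecs}); the conclusion is only $H\in\mathcal{H}^\infty(\C_{1/2+\sigma})$, never $H\in\mathcal{H}^2$. Your peripheral steps are sound---the pointwise limit of $u_t^{(m)}$, the additivity and linearity of $a_1(t)$ (this is Proposition \ref{paquito}), the orthogonal splitting off of the constant coefficient, the $m=2,3$ frequency-matching, and the $\Lambda$-bookkeeping---but they all hang on the false boundedness claim. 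To salvage your scheme you would have to replace $m^{-s}$ by a generic element of $D(A)$ near it, at which point you are reconstructing the paper's argument.
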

For clarity, we have extracted the following lemma for the proof of Theorem \ref{gordo2}. The proof of this lemma is similar to the one of \cite[Lemma 10.29]{rudin_real_1987}.

\begin{lemma}\label{uniforme}
Let $f\in \mathcal H^{2}$ and  $1/2<\alpha<\beta$. Consider the vertical strip $\Omega=\{s\in\C: \alpha<\Re(s)<\beta\}$ and define $F:\Omega\times\Omega\to\C$ by
\begin{equation}\label{funcionf}
F(z,w)=
\begin{cases}
\frac{f(z)-f(w)}{z-w}&\quad\text{if $z\not=w$},\\
\qquad\\
f'(z)&\quad \text{if $z=w$}.
\end{cases}
 \end{equation}
Then, the function $F$ is uniformly continuous on $\Omega\times\Omega$.
\end{lemma}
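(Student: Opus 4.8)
The plan is to exploit two features of the situation: the strip $\Omega$ is convex, and, since $\alpha>1/2$, it sits inside a half-plane $\C_{1/2+\sigma}$ with $\sigma=\alpha-\frac12>0$ on which the derivatives of $f$ are controlled. Concretely, applying Lemma \ref{pointwiseevaluaction} with $k=2$ produces a finite constant
\[
M:=\sup_{s\in\Omega}|f''(s)|\leq C(\sigma)\,\|f\|_{\mathcal{H}^2}<\infty,
\]
so that $f''$ is uniformly bounded on $\Omega$. This uniform bound is the quantitative input that upgrades mere continuity of $F$ to uniform continuity; it is precisely here that the hypotheses $f\in\mathcal{H}^2$ and $\alpha>1/2$ are used.

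The main tool is the integral representation of the divided difference. For any $z,w\in\Omega$ the segment $[w,z]$ remains in $\Omega$ by convexity, so, writing $\xi(t)=w+t(z-w)$, the fundamental theorem of calculus gives $f(z)-f(w)=(z-w)\int_0^1 f'(\xi(t))\,dt$, whence
\[
F(z,w)=\int_0^1 f'\bigl(w+t(z-w)\bigr)\,dt, \qquad (z,w)\in\Omega\times\Omega.
\]
I would check that this identity covers both branches of the definition \eqref{funcionf}: off the diagonal it is the displayed computation, and on the diagonal $z=w$ the integrand collapses to $f'(w)$, matching the value $F(w,w)=f'(w)$. In passing, this representation already shows that $F$ is continuous on $\Omega\times\Omega$.

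First I would record that $f'$ is $M$-Lipschitz on $\Omega$: once more by convexity and the fundamental theorem of calculus, $|f'(\xi_1)-f'(\xi_2)|\leq M\,|\xi_1-\xi_2|$ for all $\xi_1,\xi_2\in\Omega$. Then, given two pairs $(z_1,w_1),(z_2,w_2)\in\Omega\times\Omega$, I would subtract the two integral representations and set $\xi_i(t)=w_i+t(z_i-w_i)$; since $\xi_1(t)-\xi_2(t)=(1-t)(w_1-w_2)+t(z_1-z_2)$, the Lipschitz estimate gives
\[
\bigl|F(z_1,w_1)-F(z_2,w_2)\bigr|\leq M\int_0^1|\xi_1(t)-\xi_2(t)|\,dt\leq \frac{M}{2}\bigl(|z_1-z_2|+|w_1-w_2|\bigr).
\]
Hence $F$ is Lipschitz on $\Omega\times\Omega$, and in particular uniformly continuous, which is the claim.

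There is no deep obstacle in this argument; the only points demanding care are the verification of the integral representation on the diagonal and the systematic use of convexity of the strip to keep every relevant segment inside $\Omega$, so that the bound on $f''$ genuinely applies along it. The crucial and non-automatic ingredient is the uniform boundedness of $f''$ on the unbounded strip $\Omega$ supplied by Lemma \ref{pointwiseevaluaction}; without such a global bound one could only conclude local, rather than uniform, continuity.
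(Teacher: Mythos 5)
Your proof is correct and takes essentially the same route as the paper's: both arguments bound $|f''|$ uniformly on $\Omega$ via Lemma \ref{pointwiseevaluaction}, use convexity of the strip together with the representation $F(z,w)=\int_0^1 f'\bigl(w+t(z-w)\bigr)\,dt$, and deduce uniform continuity of $F$ from the resulting Lipschitz bound on $f'$. The explicit Lipschitz constant $M/2$ you extract is only a minor quantitative sharpening of the paper's $\varepsilon$--$\delta$ presentation.
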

\begin{proof}
Choose $\varepsilon>0$. By Lemma \ref{pointwiseevaluaction}, there is $M>0$ such that $|f''(s)|\leq M$ for all $s\in \Omega$. Take $\delta=\varepsilon/M$ and  $z_0,z_1\in\Omega$ such that $|z_0-z_1|<\delta$. Consider the curve 
\[
\gamma(t)=tz_0+(1-t)z_1,\quad t\in[0,1].
\]
Then, 
\begin{align*}
|f'(z_0)-f'(z_1)|=\left|\int_0^1f''(\gamma(t))(z_0-z_1)dt\right| \leq M|z_0-z_1|<\varepsilon. 
\end{align*}
Now,  take $(z_0,w_0),(z_1,w_1)\in \Omega\times\Omega$ such that $|z_0-z_1|<\delta$ and $|w_0-w_1|<\delta$. Notice that
\begin{align*}
    |F(z_0,w_0)-F(z_1,w_1)|&=\left|
    \int_0^1\left(f'(w_0+t(z_0-w_0))-f'(w_1+t(z_1-w_1))\right)dt
    \right|\\
    &\leq\int_0^1|f'(w_0+t(z_0-w_0))-f'(w_1+t(z_1-w_1))|dt<\varepsilon,
\end{align*}
since $|w_0+t(z_0-w_0)-w_1-t(z_1-w_1)|\leq\delta$ and we are done.
\end{proof}

\begin{proof}[Proof of Theorem \ref{gordo2}]
Fix $\sigma >0$. By Lemma \ref{pointwiseevaluaction}, there is a constant $C(\sigma)>0$ such that
$|f(s)|\leq C(\sigma)\|f\|_{\mathcal H^{2}}$ and $|f'(s)|\leq C(\sigma)\|f\|_{\mathcal H^{2}}$ for all $f\in \mathcal H^{2}$
and  $s\in \C_{\sigma+1/2}$. 

Given $\varepsilon>0$ and $h(s)=2^{-s}$, take $f\in \mathcal H^{2}$ a function in the domain of the infinitesimal operator of the strongly continuous semigroup $T_{t}=C_{\Phi_{t}}$ in $\mathcal H^{2}$ such that $\|f-h\|_{\mathcal{H}^2}<\varepsilon$. Moreover, we have
\begin{equation}\label{epsilon}
    |f(s)-h(s)|\leq C(\sigma)\varepsilon
\end{equation}
and 
\begin{equation}\label{epsilon1}
    |f'(s)-h'(s)|\leq C(\sigma)\varepsilon,
\end{equation}
for all $s\in \C_{1/2+\sigma}$. Thus, using \eqref{epsilon} and \eqref{epsilon1} for $\sigma/2$ and $\varepsilon$ small enough, 
both $|f|$ and $|f'|$ are bounded below by a positive constant in the vertical strip $\Omega=\{s\in\C: \frac12+\frac{\sigma}{2}<\Re(s)<\frac12+3\sigma \}\subset \C_{1/2+\sigma/2}$.

Take the function $F$ introduced in Lemma \ref{uniforme} associated with the function $f$ and consider the function
\[
K(z,w)=
\begin{cases}
\frac{2^{-z}-2^{-w}}{z-w,}&\quad\text{if $z\not=w$},\\
\qquad\\
-\ln(2)2^{-z},&\quad \text{if $z=w$}.
\end{cases}
\]
Then, given $z,w\in\Omega$,
\begin{equation}\label{Eq-luis}
\begin{split}
    |F(z,w)-K(z,w)|
    &\leq \int_0^1|f'(w+t(z-w))-h'(w+t(z-w))|dt\\&
    \leq C(\sigma/2)\|f-h\|_{\mathcal{H}^2}\leq C(\sigma/2)\varepsilon.
    \end{split}
\end{equation}
Let us give a lower bound of $|K|$,
\begin{align*}
   |K(z,w)|= \left|
    \frac{2^{-z}-2^{-w}}{z-w} \right|
    =
    2^{-\text{Re}(z)}
    \frac{|1-2^{z-w}|}{|z-w|}
   \end{align*}
Note that $|1-2^u|/|u|$ tends to $\ln2$ as $|u|\to0$. Thus, there is $\delta_{0}>0$ such that  
\begin{equation*}\label{wifi}
|K(z,w)|\geq 2^{-\text{Re}(z)}\frac{\ln2}{2},
 \end{equation*}
 whenever  $|z-w|\leq\delta_0$. Therefore, there exists $C>0$ such that $|K(z,w)|>C$ for all $z,w\in \Omega$ with $|z-w|\leq\delta_0$.  Taking $\varepsilon\leq \frac{C}{2C(\sigma/2)}$, \eqref{Eq-luis} necessarily forces $|F(z,w)|>C/2$ for any $z,w\in\Omega$ such that $|z-w|\leq\delta_0$.
 
By Theorem \ref{gordo1},   $\Phi_{t}$ converges to the identity map uniformly in $\Omega$. Hence, for $t$ small enough and $1/2+\sigma/2<\Re s<\frac12+2\sigma$, it holds $ \Phi_{t}(s)\in \Omega$ and $| \Phi_{t}(s)-s|<\delta_{0}$. Therefore, $|F(\Phi_{t}(s),s)|\geq C/2$ and $\lim_{t\to 0}F(\Phi_{t}(s),s)=F(s,s)=f'(s)$ uniformly in $\Omega'=\{s\in\C: \frac12+\sigma<\Re(s)<\frac12+2\sigma\}$. Thus $\lim_{t\to 0}\frac{1}{F(\Phi_{t}(s),s)}=\frac{1}{f'(s)}$ uniformly in $\Omega'$.
 
By the very definition of infinitesimal generator, there exists $g\in \mathcal H^{2}$ such that
\begin{equation}\label{keypoint}
\lim_{t\to 0} \frac{f(\Phi_t(s))-f(s)}{t}=g(s)
\end{equation}
in the norm of $\mathcal H^{2}$. Then 
\[
 \lim_{t\to 0}\frac{f(\Phi_t(s))-f(s)}{t}= g(s),
\]
uniformly on the vertical strip $\Omega'$.

 The introduction of the function $F$ allows us to rewrite the incremental quotient in \eqref{stress} as
\begin{align*}
  g_{t}(s):= \frac{\varphi_t(s)}{t}=\frac{\Phi_t(s)-s}{t}
    =
    \frac{f(\Phi_t(s))-f(s)}{t}\cdot \frac{1}{F(\Phi_t(s),s)}.
\end{align*}
Putting all together, and using that both $g$ and $\frac{1}{f'}$ are bounded on $\Omega '$, we have 
\[
H(s)=\lim_{t\to0^+} \frac{\Phi_t(s)-s}{t}=\frac{g(s)}{f'(s)},
\]
uniformly on the vertical strip $\Omega '$. 
$H$ is a holomorphic function in $\C_{+}$ and $g_{t}$ converges uniformly on $\Omega'$, and then in $\C_{1/2+\sigma}$, to $H$. Thus  $H\in\mathcal{H}^{\infty}(\C_{1/2+\sigma})$ for every $\sigma>0$.

In the case $\varphi_{t}\in \mathcal D_{\Lambda}$ for all $t$, we obtain that $g_{t}\in\mathcal{H}_{\Lambda}^{\infty}(\C_{1/2+\sigma}),$ for all $t$, and thus $H\in\mathcal{H}_{\Lambda}^{\infty}(\C_{1/2+\sigma})$.
\end{proof}

The converse of  Theorem \ref{gordo2}  requires the next technical theorem.   Following the ideas in the Picard-Lindel\"of  Theorem, 
 for $\sigma>0$ and $\delta>0$ small, we consider the space $X$ consisting on the collection of functions $f:\C_{1+\sigma}\times[0,\delta]\to\overline {\C_{1+\sigma}}$ satisfying the following three properties
\begin{enumerate}[i)]
    \item $f$ is continuous on $\C_{1+\sigma}\times[0,\delta]$;
    \item $s \mapsto f(s,t)-s\in\mathcal{H}^{\infty}(\C_{1+\sigma})$ for each $ t\in [0,\delta]$;
    \item The map given by
  \begin{align*}
      [0,&\delta]\to \mathcal{H}^{\infty}(\C_{1+\sigma})\\
      &t\mapsto f(s,t)-s
        \end{align*}
        is continuous.
          \end{enumerate}
Notice that $X$ depends on $\sigma $ and $\delta$ but we do not write explicitly such dependence in order to simplify the exposition. We endow $X$ with the distance defined for $f,g\in X$ as
\begin{equation}\label{metric}
    d(f,g)=\|f-g\|_{\infty}=\sup_{\substack{s\in \C_{1+\sigma}, \\ t\in[0,\delta]}}|f(s,t)-g(s,t)|.
\end{equation}
Note that conditions ii) and iii) guarantee that $d:X\times X\to[0,\infty)$. In fact, $(X,d)$ is complete. Indeed, let $\{f_n\}_n$ be a Cauchy sequence of elements in $X$. 
Since 
\begin{equation*}
   |f_n(s,t)-f_m(s,t)|\leq d(f_n,f_m)
\end{equation*}
for all $s$ and $t$, we have that the sequence $\{f_n(z,t)\}_n$ is Cauchy in $\C$. This guarantees the existence of the limit 
\begin{equation*}
    f(s,t):=\lim_{n\to\infty}f_n(s,t).
\end{equation*}
A standard argument shows that this convergence is uniform in $\C_{1+\sigma}\times[0,\delta]$, so that $f$ is continuous. 
Regarding the second property, the uniform limit of bounded Dirichlet series in $\C_{1+\sigma}$ yields again a bounded Dirichlet series in the same half-plane. Eventually, $t\mapsto f(z,t)-z$ maps the interval $[0,\delta]$ into the algebra $\mathcal{H}^{\infty}(\C_{1+\sigma})$ and, being $f$ the uniform limit of continuous $f$, the map is continuous too. Therefore, the metric space $(X,d)$ is complete.
\begin{proposition}\label{gordo4}
Let $H:\C_+\to\overline{\C}_+$ analytic and such that $H\in\mathcal{H}^{\infty}(\C_{1/2+\sigma})$ for every $\sigma>0$.
We define the operator $T$ in $X$ given by
\[
Tf(s,t)=s+\int_0^tH(f(s,\tau))d\tau,\qquad f\in X, \quad (s,t)\in \C_{1+\sigma}\times[0,\delta].
\]
Then, there is $\delta=\delta (H, \sigma)$ small enough, such that
\begin{enumerate}
    \item $T:X\to X$,
    \item $T$ is contractive.
\end{enumerate}
\end{proposition}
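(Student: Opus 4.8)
The plan is to run the Picard--Lindel\"of fixed-point scheme, but inside the Banach space $\mathcal{H}^{\infty}(\C_{1+\sigma})$ rather than in a space of scalar-valued functions, and to exploit two features of $H$. First, by hypothesis $H\in\mathcal{H}^{\infty}(\C_{1/2+\sigma'})$ for every $\sigma'>0$; taking $\sigma'=1/2+\sigma/2$ (so that $1/2+\sigma'=1+\sigma/2$) gives $M:=\|H\|_{\mathcal{H}^{\infty}(\C_{1+\sigma/2})}<\infty$, and since for $\Re z\geq 1+\sigma$ the disc $D(z,\sigma/2)$ lies in $\C_{1+\sigma/2}$, a Cauchy estimate yields $\sup_{\overline{\C_{1+\sigma}}}|H'|\leq 2M/\sigma=:L$; as $\overline{\C_{1+\sigma}}$ is convex, $H$ is $L$-Lipschitz there. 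Second, because $H$ sends $\C_{+}$ into $\overline{\C}_{+}$, we have $\Re H\geq 0$, and this is exactly what keeps $Tf$ inside the closed half-plane.

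First I would check that $T$ maps $X$ into itself. The range condition is immediate: for $f\in X$ the values $f(s,\tau)$ lie in $\overline{\C_{1+\sigma}}$, so $\Re\int_0^t H(f(s,\tau))\,d\tau\geq 0$ and hence $\Re\bigl(Tf(s,t)\bigr)\geq\Re s\geq 1+\sigma$, i.e.\ $Tf$ takes values in $\overline{\C_{1+\sigma}}$. The heart of this step is property (ii): that $s\mapsto Tf(s,t)-s=\int_0^t H(f(s,\tau))\,d\tau$ belongs to $\mathcal{H}^{\infty}(\C_{1+\sigma})$. Fixing $\tau$ and writing $f(\cdot,\tau)=s+\varphi_{\tau}$ with $\varphi_{\tau}\in\mathcal{H}^{\infty}(\C_{1+\sigma})\subset\mathcal{D}$, the map $f(\cdot,\tau)$ has the form $c\,s+\varphi$ with characteristic $c=1$, so Theorem \ref{menudeo} (applied with $\Lambda=\N$ to $H\in\mathcal{D}$) gives $H\circ f(\cdot,\tau)\in\mathcal{D}$; being bounded by $M$ on $\C_{1+\sigma}$, it lies in $\mathcal{H}^{\infty}(\C_{1+\sigma})$. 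Using the Lipschitz bound together with property (iii) of $f$, one has $\|H(f(\cdot,\tau))-H(f(\cdot,\tau'))\|_{\infty}\leq L\,\|\varphi_{\tau}-\varphi_{\tau'}\|_{\infty}\to 0$, so $\tau\mapsto H(f(\cdot,\tau))$ is continuous from $[0,\delta]$ into $\mathcal{H}^{\infty}(\C_{1+\sigma})$; its Riemann integral is then a norm-limit of Riemann sums lying in the complete space $\mathcal{H}^{\infty}(\C_{1+\sigma})$, and hence lies there, which is property (ii). Property (iii) for $Tf$ follows from $\bigl\|\int_{t'}^{t}H(f(\cdot,\tau))\,d\tau\bigr\|_{\infty}\leq M|t-t'|$, and the joint continuity (i) is the standard continuity of a parameter integral with continuous, bounded integrand.

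Finally I would prove that $T$ is a contraction. For $f,g\in X$, both take values in $\overline{\C_{1+\sigma}}$, so the Lipschitz estimate gives
\begin{align*}
|Tf(s,t)-Tg(s,t)|\leq\int_0^t|H(f(s,\tau))-H(g(s,\tau))|\,d\tau\leq L\,t\,d(f,g)\leq L\delta\,d(f,g).
\end{align*}
Taking the supremum over $(s,t)$ yields $d(Tf,Tg)\leq L\delta\,d(f,g)$, so it suffices to choose $\delta<1/L=\sigma/(2M)$. I expect the only delicate point to be property (ii): one must know not merely that $\int_0^t H(f(s,\tau))\,d\tau$ is a bounded analytic function, but that it is genuinely a \emph{bounded Dirichlet series}, and this is secured by combining the composition stability of Theorem \ref{menudeo} with the completeness of $\mathcal{H}^{\infty}(\C_{1+\sigma})$ under Banach-valued integration.
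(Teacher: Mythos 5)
Your proposal is correct and follows essentially the same route as the paper's proof: a Cauchy estimate on $H'$ (the paper takes $M=\max\{\sup_{\C_{1+\sigma}}|H'|,\sup_{\C_{1+\sigma}}|H|\}$ where you make the constant $L=2M/\sigma$ explicit), positivity of $\Re H$ for the range condition, Theorem \ref{menudeo} plus boundedness to get $H\circ f(\cdot,\tau)\in\mathcal{H}^{\infty}(\C_{1+\sigma})$, completeness of $\mathcal{H}^{\infty}(\C_{1+\sigma})$ to place the Banach-valued integral there, and the standard Picard--Lindel\"of contraction estimate with $\delta$ small. The one point you flag as delicate --- that the integral is genuinely a bounded Dirichlet series --- is handled exactly as in the paper, so there is nothing to add.
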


\begin{proof}
Observe that for $s\in\C_{1/2+2\sigma}$, the uniform convergence of the Dirichlet series defining $H$ allows us to write
\begin{align*}
    -H'(s)=\sum_{n=1}^{\infty}a_n\log n\, n^{-s}.
\end{align*}
Now, since $H\in\mathcal{H}^{\infty}(\C_{1/2+\sigma})$, by the Cauchy integral formula, we obtain that $H'$ is bounded in $\C_{1+\sigma}$ for every $\sigma>0$. 
Fix $\sigma>0$. Take $$M=\max \{\sup_{s\in \C_{1+\sigma}}|H'(s)|, \sup_{s\in \C_{1+\sigma}}|H(s)|\}$$ and $\delta<1/M$.

Let us see first that $Tf$ maps $\C_{1+\sigma}\times[0,\delta]$ into $\C_{1+\sigma}$. This is indeed the case because
\begin{align*}
    \Re(Tf(s,t))=\Re(s)+\int_0^t\Re(H(f(s,\tau)))d\tau> 1+\sigma
\end{align*}
where we have used that  $\Re(H)\geq 0$. For the continuity, notice that 
\begin{align*}
    |Tf(s,t)-Tf(s_0,t_0)|&\leq |s-s_0|+\left|
    \int_0^tH(f(s,\tau))d\tau-\int_0^{t_0}H(f(s_0,\tau))d\tau
    \right|\\
    &\leq  |s-s_0| +\int_0^t\left|
    H(f(s,\tau))-H(f(s_0,\tau))
    \right|d\tau+\int_t^{t_0}|H(f(s_0,\tau))|d\tau\\
    &\leq  |s-s_0|+ M|t-t_{0}|+M \int_0^t\left|
    f(s,\tau)-f(s_0,\tau)
    \right|d\tau.
\end{align*}
From these inequalities and the very definition of $X$, we deduce that $Tf$ is continuous in  $\C_{1+\sigma}\times[0,\delta]$.

Now, we verify that $s\mapsto Tf(s,t)-s=\int_0^tH(f(s,\tau))d\tau$ belongs to $\mathcal{H}^{\infty}(\C_{1+\sigma})$. In virtue of Theorem \ref{menudeo} we deduce that $H\circ f(\cdot, \tau)\in\mathcal{D}$ for every $\tau$ and since $H\in\mathcal{H}^{\infty}(\C_{1/2+\sigma})$,  we have that $F(\cdot, \tau)=H\circ f(\cdot, \tau) \in\mathcal{H}^{\infty}(\C_{1+\sigma})$. Moreover, using again that $H'$ is bounded, we deduce that the function $\tau\mapsto   F(\cdot, \tau)$ is continuous. Thus, using that $\mathcal{H}^{\infty}(\C_{1+\sigma})$ is a Banach space, we have that  $\int_0^tH(f(\cdot,\tau))d\tau$ also belongs to $\mathcal{H}^{\infty}(\C_{1+\sigma})$ and that the map $t\mapsto \int_0^tH(f(\cdot,\tau))d\tau$ is continuous.
Thus, we have obtained (1).

For the contractivity, let $f_1,f_2\in X$. Then,
\begin{align*}
 |Tf_1(s,t)-Tf_2(s,t)| &\leq   \int_0^t\left|
    H(f_1(s,\tau))-H(f_2(s,\tau))
    \right|d\tau
    \leq M\int_0^t |f_1(s,\tau)-f_2(s,\tau)|d\tau\\&\leq t M\|f_1-f_2\|_{\infty}=tMd(f_1,f_2)\leq \delta M d(f_1,f_2).
\end{align*}
Since $\delta M<1$, we get the contractivity.
\end{proof}

\begin{theorem}\label{gordo3}
Let $H:\C_+\to\overline{\C}_+$ analytic and such that $H\in\mathcal{H}^{\infty}(\C_{1/2+\sigma})$ for every $\sigma>0$. Then, $H$ is the infinitesimal generator of a continuous semigroup $\{\Phi_t\}_{t\geq0}$ where
\begin{equation*}
    \Phi_t(s)=s+\varphi_{t}(s)
\end{equation*}
and $\varphi_{t}\in\mathcal{D}$, for all $t$, that is, $\{\Phi_{t}\}$ is a continuous semigroup in the Gordon-Hedelmann class $\mathcal G$.
\end{theorem}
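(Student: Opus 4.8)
The plan is to obtain the semigroup first as abstract analytic self-maps of $\C_{+}$ and then upgrade it to the class $\mathcal{G}$ using the fixed-point machinery of Proposition \ref{gordo4}. Since $H(\C_{+})\subset\overline{\C_{+}}$, the Berkson--Porta criterion recalled above (\cite[Theorem 2.6]{porta}) immediately provides a continuous semigroup $\{\Phi_t\}_{t\geq0}$ of analytic self-maps of $\C_{+}$ with Denjoy--Wolff point $\infty$ whose infinitesimal generator is exactly $H$; in particular $\Phi_t$ solves the autonomous Cauchy problem $\partial_t\Phi_t=H(\Phi_t)$, $\Phi_0=\mathrm{id}$. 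Thus existence, continuity and the identification of $H$ as generator come for free, and the only thing left to prove is that $\varphi_t:=\Phi_t-\mathrm{id}\in\mathcal{D}$ for every $t$, so that $\Phi_t\in\mathcal{G}_{\infty}$ with characteristic $1$ and hence (the constraint for $c=0$ being vacuous) $\Phi_t\in\mathcal{G}$.

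Fix $\sigma>0$ and let $\delta=\delta(H,\sigma)$ and the complete metric space $X=X(\sigma,\delta)$ be as in Proposition \ref{gordo4}. The operator $Tf(s,t)=s+\int_0^tH(f(s,\tau))\,d\tau$ is a contraction of $X$, so it has a unique fixed point $\widetilde\Phi\in X$, which by construction satisfies $\widetilde\Phi(s,t)=s+\int_0^tH(\widetilde\Phi(s,\tau))\,d\tau$ and, being continuous in $(s,t)$ with $H$ continuous, is $C^1$ in $t$ and solves $\partial_t\widetilde\Phi=H(\widetilde\Phi)$, $\widetilde\Phi(s,0)=s$ on $\C_{1+\sigma}\times[0,\delta]$. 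Membership in $X$ gives $\widetilde\Phi(\cdot,t)-\mathrm{id}\in\mathcal{H}^{\infty}(\C_{1+\sigma})$ for each $t\in[0,\delta]$. Note that $\Re\partial_t\Phi_t(s)=\Re H(\Phi_t(s))\geq0$, so orbits move to the right and stay in $\C_{1+\sigma}$; hence for $s\in\C_{1+\sigma}$ both $t\mapsto\Phi_t(s)$ and $t\mapsto\widetilde\Phi(s,t)$ solve the same ODE with the same initial value, and since $H$ is holomorphic (hence locally Lipschitz) uniqueness forces $\Phi_t(s)=\widetilde\Phi(s,t)$ on $\C_{1+\sigma}\times[0,\delta]$.

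Consequently $\varphi_t=\Phi_t-\mathrm{id}\in\mathcal{H}^{\infty}(\C_{1+\sigma})\subset\mathcal{D}$ for every $t\in[0,\delta]$; the bounded Dirichlet series representing $\varphi_t$ on $\C_{1+\sigma}$ coincides with the globally defined analytic function $\Phi_t-\mathrm{id}$ on $\C_{+}$ by the identity principle, so indeed $\varphi_t\in\mathcal{D}$ and $\Phi_t\in\mathcal{G}$ for $0\leq t\leq\delta$. For an arbitrary $t>0$, choose $n$ with $t/n\leq\delta$ and write $\Phi_t=\Phi_{t/n}\circ\cdots\circ\Phi_{t/n}$ ($n$ factors); since each factor lies in $\mathcal{G}$, Proposition \ref{colorario} yields $\Phi_t\in\mathcal{G}$, with characteristic $1^n=1$. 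This shows $\{\Phi_t\}$ is a continuous semigroup in $\mathcal{G}$ with generator $H$ and $\Phi_t-\mathrm{id}\in\mathcal{D}$, as claimed.

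The main obstacle is the passage between the two descriptions of the flow: the Berkson--Porta semigroup lives on all of $\C_{+}$ but carries no a priori Dirichlet-series information, whereas the fixed point produces the Dirichlet-series structure only on the smaller half-plane $\C_{1+\sigma}$. Reconciling them rests on the uniqueness of solutions of the holomorphic ODE (to identify the two flows on $\C_{1+\sigma}$) together with analytic continuation (to propagate the conclusion $\varphi_t\in\mathcal{D}$ back to the full half-plane); care is also needed to verify that orbits remain in $\C_{1+\sigma}$ so that the comparison is legitimate.
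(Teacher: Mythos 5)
Your proposal is correct and follows essentially the same route as the paper: Berkson--Porta supplies the continuous semigroup generated by $H$, the fixed point of the operator in Proposition \ref{gordo4} supplies a Dirichlet-series flow on a smaller half-plane, uniqueness for the Cauchy problem identifies the two, and Proposition \ref{colorario} with the semigroup law extends membership in $\mathcal{G}$ from $t\in[0,\delta]$ to all $t$. Your explicit check that $\Re H\geq 0$ keeps orbits in $\C_{1+\sigma}$ is a detail the paper leaves implicit, and your direct use of $H(\C_+)\subset\overline{\C_+}$ in Berkson--Porta absorbs the boundary-touching case that the paper dispatches separately at the outset.
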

\begin{proof} 
We notice that whenever the boundary of $\C_+$ is attained, then $H$ is constant and the result is straightforward. Thus, we assume that $H:\C_+\to \C_{+}$. By the Berkson-Porta Theorem \cite[Theorem 2.6]{porta}, there exists a unique continuous semigroup $\{\Phi_t\}_{t\geq0}$ in $\C_{+}$ such that $H$ is its infinitesimal generator. In particular, the map $[0,\delta)\ni t\mapsto \Phi_{t}(s)$ is the unique solution of the Cauchy problem 
\begin{equation*}
\frac{\partial \Phi _{t}(s)}{\partial t}=H(\Phi _{t}(s))\quad
\mbox{and} \quad \Phi _{0}(s)= s\in \C_{+}.
\end{equation*}

On the other hand, and following the notation introduced in Proposition \ref{gordo4}, by the Banach Fixed Point Theorem, there are $\delta>0$, a continuous function $f:\C_{2}\times[0,\delta]\to\overline {\C_{2}}$ satisfying the following three properties
\begin{enumerate}[i)]
 \item $s \mapsto f(s,t)-s\in\mathcal{H}^{\infty}(\C_{2})$ for each $ t\in [0,\delta]$;
    \item the map given by $ t\mapsto  f(s,t)-s$ from $[0,\delta] $ to  $ \mathcal{H}^{\infty}(\C_{2})$ is continuous;
 \item and $f$ is a fixed point of the operator introduced in  Proposition \ref{gordo4}. That is 
\[
f(s,t)=s+\int_0^tH(f(s,\tau))d\tau,\qquad f\in X, \quad (s,t)\in \C_{2}\times[0,\delta].
\]
  \end{enumerate}
 Write $g_{t}(s)=f(s,t)$ for $ s\in \C_{2}$ and $t\in [0,\delta]$. Clearly,  the map $[0,\delta)\ni t\mapsto g_{t}(s)$ is a solution of the Cauchy problem 
\begin{equation*}
\frac{\partial g _{t}(s)}{\partial t}=H(g_{t}(s))\quad
\mbox{and} \quad g _{0}(s)= s\in \C_{2}.
\end{equation*}
Thus, by the uniqueness of the Cauchy problem, $\Phi_{t}(s)=g_{t}(s)$ for $0\leq t<\delta$ and $\Re s>2$. In particular, this implies that $s\mapsto \Phi_{t}(s)-s$ is a Dirichlet series for $t<\delta$. That is, for those values of $t$, $\Phi_{t}\in \mathcal G$. Finally, by Proposition \ref{colorario} and the very definition of semigroup, we deduce that $\Phi_{t}\in \mathcal G$ for all $t$. 
\end{proof}

\begin{remark}
If the function $H$ in  Theorem \ref{gordo3} belongs to  $\mathcal{H}^{\infty}_{\Lambda}(\C_{1/2+\sigma})$, with $\Lambda $ a multiplicative semigroup of natural numbers, our proof can be easily adapted to get that  $\varphi_{t} \in \mathcal D_{\Lambda}$ for all $t>0$.
\end{remark}

One way to provide examples of continuous semigroups is the following. Consider a holomorphic function $G:\D\to \overline{\C_{+}}$, with $G(z)=\sum_{n=0}^{\infty} a_{n}z^{n}$, $z\in \D$. Fix an integer $q\geq 2$ and define 
$$
H(s)=G(q^{-s})=\sum_{n=0}^{\infty} a_{n}(q^{n})^{-s}, \quad s\in \mathbb C_{+}. 
$$
By Theorem \ref{gordo3}, there exists a continuous semigroup $\{\Phi_{t}\}_{t\geq 0}$ in $\mathcal G$ such that $H$ is its infinitesimal generator. Moreover, by the previous remark, as $\Lambda=\{q^{n}: \, n\geq 0\}$ is a multiplicative semigroup, it is easy to deduce that for every $t\geq 0$ there exists $g_{t}:\D\to \C_{+}$ holomorphic such that $\Phi_{t}(s)=s+g_{t}(q^{-s})$, $s\in \C_{+}$. We will see a concrete example of this situation in Example \ref{exampleluis}.

\begin{proof}[Proof of Theorem \ref{gcoro}] 
For $a)$ implies $b)$, we  use first Theorem \ref{gordo2} and then Theorem \ref{queffelecs}. The fact that $b)$ implies $c)$ is trivial. For $c)$ implies $a)$, we use Theorem \ref{queffelecs} and we find that $\sigma_u(H)\leq0$. This allows us to apply Theorem \ref{gordo3} and $a)$ follows. \label{proofgcoro}
\end{proof}

\subsection{Infinitesimal generator of a strongly continuous semigroup of composition operators in $\mathcal H^{p}$}
Let $1\leq p<\infty$ and take a strongly continuous semigroup of composition operators $\{T_t\}_t$ given by $T_tf=f\circ\Phi_t$, where $\{\Phi_t\} _{t}$ is a continuos semigroup in the class $\mathcal G$.  Denote by $A$ the infinitesimal generator of $\{T_{t}\}$. By Theorem \ref{gcoro}, the infinitesimal generator of $\{\Phi_t\} _{t}$  is a Dirichlet series $H$ sending $\C_{+}$ in $\overline{\C_{+}}$.  
 Take $f\in\mathcal{D}(A)$. Then, by the very definition of $A$ and the chain rule
\begin{align*}
    Af(s)=\lim_{t\to0^+}\frac{f\circ\Phi_t(s)-f(s)}{t}=f'(s)\frac{\partial}{\partial t}(\Phi_t(s))\Big|_{t=0}=f'(s)H(s)
\end{align*}
whenever $\Re s>1/2$.
Moreover, we have obtained that  $$
D(A)\subset \{f\in \mathcal H^{p}: Hf'\in \mathcal H^{p}\}.
$$ The other inclusion was proved in \cite[Theorem 2]{BCDMPS} in a much more general context using properties of the resolvent of a semigroup of operators. Thus, we have: 

\begin{proposition} \label{Prop:infgen}
Let $1\leq p<\infty$ and take  a strongly continuous semigroup of composition operators $T_tf=f\circ\Phi_t$ in $\mathcal H^{p}$. Then, there is a Dirichlet series $H: \C_{+}\to \overline{\C_{+}}$ such that the infinitesimal generator is given by the operator 
$
A(f)=H f'
$ and
$$
D(A)=\{f\in \mathcal H^{p}: Hf'\in \mathcal H^{p}\}.
$$
\end{proposition}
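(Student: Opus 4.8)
The plan is to prove the two inclusions describing $D(A)$ separately, after first pinning down the Dirichlet series $H$. Since $\{T_t\}_{t\geq0}$ is strongly continuous, Theorem \ref{gordo11} guarantees that $\{\Phi_t\}_{t\geq0}$ is a continuous semigroup of elements of $\mathcal G$; hence it has a Berkson-Porta infinitesimal generator $H$, and by Theorem \ref{gcoro} this $H$ is a Dirichlet series mapping $\C_{+}$ into $\overline{\C_{+}}$. This is the $H$ in the statement, and the whole proposition reduces to identifying $A$ with $f\mapsto Hf'$ on the asserted domain.

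For the inclusion $D(A)\subseteq\{f:Hf'\in\mathcal H^{p}\}$ (which simultaneously yields the formula $A(f)=Hf'$), I would start from $f\in D(A)$, so that $Af=\lim_{t\to0^+}(T_tf-f)/t$ exists in the norm of $\mathcal H^{p}$. First I would pass to pointwise values: because the evaluation functionals $\delta_s$ are bounded on $\mathcal H^{p}$ for $\Re s>1/2$ (see \eqref{normevaluationfunciotnal}), norm convergence transfers to pointwise convergence, giving
\[
Af(s)=\lim_{t\to0^+}\frac{f(\Phi_t(s))-f(s)}{t},\qquad \Re s>1/2.
\]
Next I would compute this pointwise limit independently. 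Writing the quotient as $\dfrac{f(\Phi_t(s))-f(s)}{\Phi_t(s)-s}\cdot\dfrac{\Phi_t(s)-s}{t}$ (legitimate since $\{\Phi_t\}$ has Denjoy-Wolff point $\infty$ and hence no interior fixed point, so $\Phi_t(s)\neq s$ for $t>0$), the first factor tends to $f'(s)$ because $\Phi_t(s)\to s$ and $f$ is holomorphic, while the second tends to $H(s)$ by the Berkson-Porta local uniform differentiability \eqref{stress}. Thus $Af(s)=f'(s)H(s)$ for $\Re s>1/2$, i.e. $Af=Hf'$; since $Af\in\mathcal H^{p}$, we conclude $Hf'\in\mathcal H^{p}$.

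For the reverse inclusion $\{f:Hf'\in\mathcal H^{p}\}\subseteq D(A)$, I would invoke \cite[Theorem 2]{BCDMPS}, proved there in a general setting of Banach spaces of holomorphic functions via properties of the resolvent $(\lambda I-A)^{-1}$. The task is to verify that its hypotheses hold in the present situation: strong continuity of $\{T_t\}$ is assumed, the point evaluations and the evaluations of derivatives are bounded on $\mathcal H^{p}$ (Lemma \ref{pointwiseevaluaction} and \eqref{normevaluationfunciotnal}), and $H$ is the generator of the underlying semigroup $\{\Phi_t\}$ as identified above. Combining the two inclusions gives the claimed description of $D(A)$ and of $A$.

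The main obstacle I anticipate lies in the forward inclusion, namely reconciling the two descriptions of the same limit: the norm-limit $Af$ in $\mathcal H^{p}$ and the pointwise chain-rule limit $f'H$. Matching them rests essentially on the boundedness of point evaluations, which lets me read off the pointwise limit of a norm-convergent sequence, together with the local uniform convergence in \eqref{stress} needed to evaluate the incremental quotient factor by factor. The reverse inclusion is, by contrast, \emph{imported} from \cite{BCDMPS}; the only genuine work there is checking that $\mathcal H^{p}$ fits the structural hypotheses of that result, rather than reproving the resolvent argument.
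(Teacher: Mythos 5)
Your proposal is correct and matches the paper's proof essentially step for step: the paper likewise identifies $H$ via the continuity of $\{\Phi_t\}$ and Theorem \ref{gcoro}, obtains $Af=Hf'$ on $D(A)$ by the same pointwise chain-rule computation (justified by the bounded evaluations), and imports the reverse inclusion from \cite[Theorem 2]{BCDMPS}. The only micro-caveat is your division by $\Phi_t(s)-s$, which degenerates precisely when some $\Phi_t$ is the identity, i.e.\ when the semigroup is trivial and the statement holds trivially with $H\equiv0$ --- so nothing is lost.
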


From the very beginning of our exposition, we have been working with  strongly continuous semigroups. Another standard and useful notion of semigroups of operators are those which are uniformly continuous. Let us recall this notion.  Consider a Banach space $X$ and  a semigroup of operators $\{T_t\}_t$ in $X$. It is said that $\{T_t\}_t$ is uniformly continuous if and only if $T_{t}$ converges, as $t$ goes to $0$, to the identity map in the norm of the space of bounded operators in $X$.  Clearly, every uniformly continuous semigroup is strongly continuos. 
A classical result states that a strongly continuos semigroup with infinitesimal generator $A$ is uniformly continuous if and only if $A$ is bounded and if and only if $D(A)=X$. In such a case, $T_{t}=e^{tA}$ for all $t>0$. See, i.e.  \cite[Corollary 1.5, Page 39]{engels}. We will show that no non-trivial semigroup of composition operators is uniformly continuous on $\mathcal H^{p}$.

\begin{theorem}\label{Thm:uniformly}
$1\leq p<\infty$. Let $\{T_t\}_{t\geq0}$ be a uniformly continuous semigroup of composition operators in $\mathcal{H}^p$. Then, $T_t=\mathrm{Id}$ for every $t\geq0$.
\end{theorem}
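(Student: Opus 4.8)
The plan is to reduce the statement to showing that the infinitesimal generator of $\{T_t\}$ vanishes, and then to exhibit explicit test functions forcing this. First I would record the structural facts. Since every uniformly continuous semigroup is strongly continuous, and since the boundedness of each composition operator $C_{\Phi_t}$ on $\mathcal{H}^p$ forces $\Phi_t\in\mathcal{G}$ by Theorem \ref{hp}, Theorem \ref{gordo11} tells us that $\{\Phi_t\}_{t\geq0}$ is a continuous semigroup in $\mathcal{G}$. Hence Proposition \ref{Prop:infgen} applies and the infinitesimal generator $A$ of $\{T_t\}$ has the form $A(f)=Hf'$, where $H:\C_+\to\overline{\C_+}$ is a Dirichlet series. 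On the other hand, the classical characterization of uniform continuity \cite[Corollary 1.5, Page 39]{engels} guarantees that $A$ is a \emph{bounded} operator on $\mathcal{H}^p$ and that $T_t=e^{tA}$. It therefore suffices to prove $H\equiv0$: this gives $A=0$ and hence $T_t=\mathrm{Id}$ for every $t\geq0$.

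To prove $H\equiv0$ I would argue by contradiction, supposing $H(s)=\sum_k b_k k^{-s}\not\equiv0$ and letting $b_m$ denote its first non-zero coefficient. The natural test functions are $f_n(s)=n^{-s}$, $n\in\N$. Their Bohr lifts are monomials on $\T^\infty$, so $\|f_n\|_{\mathcal{H}^p}=1$, while $f_n'(s)=-(\log n)\,n^{-s}$, so that $Af_n=-(\log n)\,H\cdot n^{-s}$. The crucial estimate is a uniform lower bound $\|H\cdot n^{-s}\|_{\mathcal{H}^p}\geq|b_m|$. This follows from two observations: multiplication by $n^{-s}$ merely shifts the Dirichlet coefficients, so that in $H\cdot n^{-s}=\sum_k b_k (kn)^{-s}$ the coefficient of index $mn$ equals $b_m$; and every Dirichlet coefficient $a_N$ of a function $g\in\mathcal{H}^p$ satisfies $|a_N|\leq\|g\|_{\mathcal{H}^1}\leq\|g\|_{\mathcal{H}^p}$, because each Fourier coefficient of $\mathcal{B}g$ on $\T^\infty$ is dominated by $\|\mathcal{B}g\|_{L^1(\T^\infty)}$.

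Combining these facts yields
\[
\|A\|\;\geq\;\frac{\|Af_n\|_{\mathcal{H}^p}}{\|f_n\|_{\mathcal{H}^p}}\;=\;(\log n)\,\|H\cdot n^{-s}\|_{\mathcal{H}^p}\;\geq\;(\log n)\,|b_m|,
\]
and letting $n\to\infty$ (recalling $b_m\neq0$) contradicts the boundedness of $A$. Therefore $H\equiv0$, which completes the proof. The only delicate point---and the one I expect to require the most care, though it is ultimately elementary---is the uniform lower bound $\|H\cdot n^{-s}\|_{\mathcal{H}^p}\geq|b_m|$; everything else is bookkeeping with results already established in the paper.
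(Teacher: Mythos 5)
Your proposal is correct and takes essentially the same route as the paper: both reduce the statement, via Proposition \ref{Prop:infgen} and the classical fact that uniform continuity forces the generator $A(f)=Hf'$ to be bounded with full domain, to testing $A$ on the unit-norm functions $f_n(s)=n^{-s}$, which yields $\|A\|\geq (\log n)\,\|Hn^{-s}\|_{\mathcal{H}^p}$ and a contradiction as $n\to\infty$ unless $H\equiv 0$. The only (harmless) difference is how the lower bound on $\|Hn^{-s}\|_{\mathcal{H}^p}$ is obtained: the paper observes that $f\mapsto n^{-s}f$ is an isometry of $\mathcal{H}^p$, giving the exact value $(\log n)\,\|H\|_{\mathcal{H}^p}$, whereas you use the equally valid coefficient estimate $|b_m|\leq\|Hn^{-s}\|_{\mathcal{H}^1}\leq\|Hn^{-s}\|_{\mathcal{H}^p}$ for the first non-zero coefficient $b_m$ of $H$.
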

\begin{proof}
By Proposition \ref{Prop:infgen}, if the semigroup is strongly continuous with infinitesimal generator $A$, then 
$$
D(A)=\{f\in \mathcal H^{p}: Hf'\in \mathcal H^{p}\},
$$
where  $H: \C_{+}\to \overline{\C_{+}}$ is a Dirichlet series. 
Assume that $A$ is bounded. Notice that, for each $n\geq 2$, the operator $  M_n: \mathcal{H}^p \to \mathcal{H}^p$ given by $M_{n}(f)=n^{-s}f$ is an isometry.
Using this, we have that
 \begin{align*}
    \|A(n^{-s})\|_{\mathcal{H}^p}= \|H(n^{-s})'\|_{\mathcal{H}^p}=\log n\, \|Hn^{-s}\|_{\mathcal{H}^p}=\log n\, \|H\|_{\mathcal{H}^p}.
 \end{align*}
 Since   $\|n^{-s}\|_{\mathcal{H}^p}=1$ for all $n$, we deduce that the operator $A$ is  bounded in $\mathcal{H}^p$ if, and only if, $H\equiv0$. Clearly, this forces $T_t=Id$ for every $t\geq0$, as desired.
\end{proof}

\section{The Koenigs function of semigroups in $\mathcal G$} \label{sec:Koenigs}

In this section, we provide a characterization of the Koenigs function of a given continuous semigroup in the class $\mathcal G$. In fact, this study provides more new dynamic information about the semigroup. Indeed, we prove that, up to automorphims, the functions of a continuous semigroups in the class $\mathcal G$ are of zero hyperbolic step. For a reference to this topic for general semigroups we refer the reader to \cite[Chapter 9]{manoloal}. 

\begin{theorem}\cite[Theorem 9.3.5]{manoloal}
Let $\{\Phi_t\}_{t\geq0}$ be a non-elliptic continuous semigroup of analytic functions in $\C_+$. Then there exists a  univalent function $h:\C_+\to\C$ such that
\begin{equation}\label{modelo}
    h\circ\Phi_t(s)=h(s)+t.
\end{equation}
The function $h$ is unique up to an additive constant. 
\end{theorem}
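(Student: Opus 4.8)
The plan is to construct $h$ explicitly from the infinitesimal generator and then establish, in turn, the functional equation, univalence, and uniqueness. Recall from the Berkson--Porta theory (equations \eqref{stress}--\eqref{cauchy}) that the semigroup admits a holomorphic infinitesimal generator $H$ with $\partial_t\Phi_t(s)=H(\Phi_t(s))$ and $\Phi_0=\mathrm{id}$. A zero $s_0$ of $H$ in $\C_+$ would be a common fixed point of all $\Phi_t$ (by uniqueness for \eqref{cauchy}, the constant $\Phi_t(s_0)\equiv s_0$ is the solution), making the semigroup elliptic; since we assume it is non-elliptic, $H$ has no zero in $\C_+$, so $1/H$ is holomorphic there. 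Fixing $s_0\in\C_+$, I would set
$$
h(s)=\int_{s_0}^{s}\frac{d\zeta}{H(\zeta)}.
$$
As $\C_+$ is simply connected and $1/H$ is holomorphic and nowhere vanishing, this primitive is well defined and holomorphic, with $h'=1/H\neq0$; in particular $h$ is locally univalent.

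To obtain \eqref{modelo}, I would freeze $s$ and differentiate $\psi(t):=h(\Phi_t(s))-t$ using \eqref{cauchy}:
$$
\psi'(t)=h'(\Phi_t(s))\,\partial_t\Phi_t(s)-1=\frac{H(\Phi_t(s))}{H(\Phi_t(s))}-1=0.
$$
Hence $\psi$ is constant in $t$, and evaluating at $t=0$ gives $h(\Phi_t(s))=h(s)+t$, which is precisely \eqref{modelo}.

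The main obstacle is univalence, since local univalence alone does not suffice (witness $e^{z}$ on $\C$), so the global dynamics must intervene. The key structural observation is that \eqref{modelo} makes $\Im h$ a first integral of the flow (it is constant along each orbit $t\mapsto\Phi_t(s)$) while $\Re h$ is a strictly increasing time parameter along that orbit; moreover distinct orbits are disjoint by uniqueness for \eqref{cauchy}, and since $h'\neq0$ each connected component of a level set $\{\Im h=c\}$ is a single orbit. Thus $h$ carries each orbit injectively onto a portion of a horizontal line, and the only way injectivity could fail is that two distinct orbits sharing the same value of $\Im h$ have overlapping images in $\Re h$. Ruling this out is settled by analysing the behaviour of the orbits as they approach the Denjoy--Wolff point $\infty$ (recall \eqref{DW}): this pins down the Koenigs domain $h(\C_+)$ as a half-plane, a strip, or the whole plane, shows $h$ to be proper onto this simply connected image, and hence forces $h$ to be a single-sheeted covering, i.e.\ univalent. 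This boundary/dynamical step is the technical heart of the argument.

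Finally, uniqueness is immediate. If $h_1,h_2$ both satisfy \eqref{modelo}, then $g:=h_1-h_2$ obeys $g\circ\Phi_t=g$ for all $t$; differentiating at $t=0$ gives $g'(s)H(s)=0$ for every $s\in\C_+$, and since $H$ has no zero there, $g'\equiv0$. Therefore $g$ is constant, so $h$ is determined up to an additive constant.
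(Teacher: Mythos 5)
This statement is quoted in the paper from \cite[Theorem 9.3.5]{manoloal} without proof, so your attempt can only be measured against the standard argument. Your construction $h(s)=\int_{s_0}^{s} d\zeta/H(\zeta)$ is legitimate: the non-elliptic hypothesis does force $H$ to be zero-free (a zero would be a common fixed point by uniqueness for \eqref{cauchy}), the verification of \eqref{modelo} by differentiating $t\mapsto h(\Phi_t(s))-t$ is correct, and your uniqueness argument ($g\circ\Phi_t=g$, hence $g'H\equiv 0$, hence $g$ constant) is clean and complete.

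The genuine gap is univalence, which you yourself flag as ``the technical heart'' and then do not prove; worse, the route you sketch would fail. You assert that the dynamics ``pins down the Koenigs domain $h(\C_+)$ as a half-plane, a strip, or the whole plane,'' but this trichotomy is false: the only general constraint is $h(\C_+)+t\subseteq h(\C_+)$ for $t\geq 0$ (starlikeness at infinity). The paper's own Example \ref{exampleluis} is a counterexample --- there $h(\C_+)=\{\zeta:\,|2^{\zeta}-1|>1\}$, the plane minus countably many closed sets unbounded to the left, which is none of the three shapes (the paper even remarks it is not contained in any half-plane). Properness of $h$ onto its image is likewise not established, and using it is close to circular: for a zero-free-derivative map, proper onto a simply connected image is essentially equivalent to the injectivity you are trying to prove. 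Your level-set observation also misfires in detail --- a connected component of $\{\Im h=c\}$ is a maximal integral curve of $H$, not a forward orbit $\{\Phi_t(s):t\geq0\}$ --- and, more importantly, it never excludes the actual failure mode: two distinct maximal trajectories with the same value of $\Im h$ whose $\Re h$-intervals overlap. The known repairs both exploit a fact you never use, namely that each $\Phi_t$ is itself univalent (quoted in the paper from \cite[Theorem 8.1.17]{manoloal}): one either obtains $h$ as a locally uniform limit of suitably normalized univalent maps built from $\Phi_t$ and invokes Hurwitz's theorem, or one runs the abstract holomorphic-model (direct limit) construction of \cite[Chapter 9]{manoloal}, where injectivity of $h$ is inherited from injectivity of the $\Phi_t$ by construction. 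As it stands, your proposal proves existence of a locally univalent solution of Abel's equation and its uniqueness, but not the theorem.
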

The function $h$ is known as the Koenigs function of the semigroup. 
The interest about such function is that its study can provide quite useful information about the semigroup. Indeed, since the  function $h$ is univalent, we can recover the semigroup $\{\Phi_t\}_{t\geq0}$ as
\begin{equation*}
    \Phi_t(s)=h^{-1}(h(s)+t).
\end{equation*}
If we differentiate with respect to $t$ in \eqref{modelo} and evaluate at $t=0$, the chain rule gives in the left-hand side
\begin{equation*}
 \frac{\partial}{\partial t}   (h\circ\Phi_t(s))\big|_{t=0}=h'(\Phi_0(s))H(s)=h'(s)H(s),
\end{equation*}
where $H$ is the infinitesimal generator of the semigroup $\{\Phi_t\}$. The right hand-side gives
\[
\frac{\partial}{\partial t}(h\circ\Phi_t(s))\big|_{t=0}=1.
\]
Therefore,
\begin{equation} \label{derivadaKoenigs}
    h'(s)=\frac{1}{H(s)}.
\end{equation}

Before describing the Koenigs function of a semigroup in the class $\mathcal G$, we study some properties of the coefficients of the infinitesimal generator.  In what follows, given a  continuous semigroup $\{\Phi_t\}_{t\geq0}$ of analytic functions in the class $\mathcal{G}$, we write $\Phi_{t}(s)=s +\varphi_{t}(s)$ and $\varphi_t(s)=\sum_{n\geq1}a_n(t)n^{-s}$. Then its infinitesimal generator $H$ is given by 
\begin{equation}\label{efila}
    H(s)=\lim_{t\to0^+}\frac{\varphi_t(s)}{t}
\end{equation}
uniformly in vertical strips of $\C_{1/2+\sigma}$. Hence,  we can write 
\begin{equation}\label{efila1}
    H(s)=\lim_{t\to0^+}\psi_t(s),
\end{equation}
where $\psi_t(s)=\sum_{n\geq1}c_n(t)n^{-s}$, with $c_n(t)=a_n(t)/t$.
Let $\{b_n\}_{n\geq1}$ be the sequence of coefficients of the infinitesimal generator $H$. 
\begin{lemma}\label{hache}
For every $n\in\N$, $b_n=a'_n(0)$.
\end{lemma}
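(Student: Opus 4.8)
The plan is to show that for each fixed $n$, the coefficient $c_n(t)=a_n(t)/t$ of the auxiliary series $\psi_t$ converges to $b_n$ as $t\to 0^+$, and to recognize this limit as the derivative $a_n'(0)$. The two equalities $b_n=\lim_{t\to 0^+}c_n(t)$ and $\lim_{t\to 0^+}c_n(t)=a_n'(0)$ will be established separately, and combined they give the statement.

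First I would extract the coefficient $c_n(t)$ from $\psi_t$ by the standard device for Dirichlet series: multiply by $n^{s}$ and let $\Re s\to+\infty$, isolating the $n$-th term. More precisely, I would use the coefficient-recovery formula
\begin{equation*}
c_n(t)=\lim_{R\to+\infty}\frac{1}{2R}\int_{-R}^{R}\psi_t(\sigma_0+i\tau)\,n^{\sigma_0+i\tau}\,d\tau,
\end{equation*}
valid for $\sigma_0$ large enough that the series converges absolutely, and similarly $b_n$ is recovered from $H$ by the same averaging formula. The key point is that, by \eqref{efila1}, $\psi_t\to H$ uniformly in vertical strips of $\C_{1/2+\sigma}$; so on a fixed vertical line $\Re s=\sigma_0>1/2$ we may pass the limit $t\to 0^+$ inside the Bohr-type averaging integral. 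Since $a_n(t)=c_n(t)\cdot t$, the convergence of $\psi_t$ to $H$ forces $c_n(t)\to b_n$, i.e. $a_n(t)/t\to b_n$.

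The second equality then follows by noting what $c_n(t)\to b_n$ says about $a_n$ directly: since $\Phi_0=\mathrm{Id}$ we have $\varphi_0\equiv 0$ and hence $a_n(0)=0$ for every $n$, so
\begin{equation*}
a_n'(0)=\lim_{t\to 0^+}\frac{a_n(t)-a_n(0)}{t}=\lim_{t\to 0^+}\frac{a_n(t)}{t}=\lim_{t\to 0^+}c_n(t)=b_n,
\end{equation*}
which is exactly the claim. Thus once the interchange of limits is justified, the identification $b_n=a_n'(0)$ is immediate and the right-derivative of $a_n$ at $0$ exists.

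The main obstacle will be rigorously justifying the interchange of the limit $t\to 0^+$ with the coefficient-extraction operation. The cleanest route is to avoid the truncated integral altogether and instead apply Lemma \ref{lemaso}: on a fixed half-plane, the map sending a Dirichlet series to its $n$-th coefficient is a bounded linear functional (one may realize it via evaluation of $m^{s}\varphi(s)$ limits, or via pointwise evaluation functionals which are bounded by \eqref{normevaluationfunciotnal} and Lemma \ref{pointwiseevaluaction}), so uniform convergence of $\psi_t$ to $H$ on a vertical strip transfers to convergence of the individual coefficients. I would therefore lean on the already-established uniform convergence in \eqref{efila1} together with the continuity of coefficient extraction, rather than re-proving an integral estimate, keeping the argument short and self-contained.
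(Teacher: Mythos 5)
Your primary argument coincides with the paper's proof: the paper extracts the coefficients through the integral (Bohr averaging) formula on a fixed vertical line (it works at $\Re s=1$, inside the strip $\Omega=\{3/4<\Re s<3/2\}$), obtains the bound $|c_n(t)-b_n|\leq n\,\sup_{s\in\Omega}|\psi_t(s)-H(s)|$, lets $t\to 0^+$ using the uniform convergence of \eqref{efila1} (Theorem \ref{gordo2}), and then identifies $\lim_{t\to0^+}c_n(t)$ with $a_n'(0)$ exactly as you do, via $a_n(0)=0$. Note also that the ``interchange of limits'' you flag as the main obstacle is in fact a non-issue: for every $R$ the truncated average of $(\psi_t-H)(\sigma_0+i\tau)\,n^{\sigma_0+i\tau}$ is dominated in modulus by $n^{\sigma_0}\sup_{\Re s=\sigma_0}|\psi_t(s)-H(s)|$, a bound uniform in $R$, so it survives the limit $R\to+\infty$ and tends to $0$ with $t$; no dominated-convergence-type argument is needed.

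The shortcut you ultimately say you would lean on, however, has a genuine gap. Lemma \ref{lemaso} recovers only the \emph{first nonzero} coefficient of a Dirichlet series; to reach the $n$-th coefficient for $n\geq 2$ you must subtract the earlier terms and multiply by the unbounded factor $n^{s}$, and sup-norm smallness of $\psi_t-H$ on a half-plane is destroyed by that multiplication (e.g.\ $\|g_t\|_{\mathcal H^{\infty}(\C_{\sigma_0})}\to 0$ gives no direct control of $2^{s}\bigl(g_t(s)-(c_1(t)-b_1)\bigr)$ as $\Re s\to+\infty$). The claim behind your shortcut---that coefficient extraction is a bounded linear functional on $\mathcal H^{\infty}(\C_{\sigma_0})$, with $|d_n|\leq n^{\sigma_0}\|g\|_{\mathcal H^{\infty}(\C_{\sigma_0})}$---is true, but its standard proof is precisely the averaging integral (equivalently, a Cauchy estimate after the Bohr lift), i.e.\ the very estimate you were trying to avoid. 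Moreover, \eqref{normevaluationfunciotnal} and Lemma \ref{pointwiseevaluaction} concern \emph{point evaluation} functionals on $\mathcal H^{p}$, not coefficient functionals; continuity of point evaluations only yields pointwise convergence of $\psi_t$ to $H$, which you already have and which does not by itself transfer to coefficients. So keep your first version of the argument: it is the paper's proof, it is already short, and the one-line integral bound is what makes it rigorous.
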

\begin{proof}
For $n\in\N$ fixed, using the integral formula for the coefficient of a Dirichlet series \cite[Proposition 1.9]{peris}, we get
\begin{align*}
    |c_n(t)-b_n|&=\lim_{T\to+\infty}\frac{1}{2T}\left|
    \int_{-T}^T(\psi_{t}(1+iy)-H(1+iy))n^{1+iy}dy
    \right|\\&
    \leq \lim_{T\to+\infty}\frac{1}{2T}\int_{-T}^T\left|\psi_t(1+iy)-H(1+iy)\right|ndy\\
    &\leq n\sup_{s\in\Omega}|\psi_t(s)-H(s)|
    \end{align*}
    where $\Omega=\{s\in\C:\frac 34<\text{Re}(s)<\frac 32\}$. If we let $t\to0^+$, by Theorem \ref{gordo2}, the supremum vanishes and we find that, for each $n\in\N$,
    \[
    c_n(t)\to b_n \quad \text{as $t\to0^+$}.
    \]
    On the other hand, for each $n\in\N$
    \[
    \lim_{t\to0^+}c_n(t)= \lim_{t\to0^+}\frac{a_n(t)}{t}=a'_n(0).
    \]
    The uniqueness of the limit gives the desired conclusion. 
\end{proof}
\begin{lemma}\label{aes}
For every $n$, 
\begin{equation}\label{formula}
    a_n(t+u)=a_n(u)+\sum_{k=1}^{n}a_k(t)b_n^{k}(u),
\end{equation}
where $b_n^k(u)$ stands for the $k^{th}$ coefficient of $n^{-\Phi_u(s)}$.
\end{lemma}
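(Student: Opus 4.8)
The plan is to read off the identity \eqref{formula} by expanding both sides of the semigroup relation as Dirichlet series and then matching the coefficients of $n^{-s}$.

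First I would exploit the algebraic semigroup law $\Phi_{t+u}=\Phi_t\circ\Phi_u$ together with Proposition \ref{cste1}, which gives $c_t=1$ for all $t$, so that $\Phi_t(w)=w+\varphi_t(w)$. Substituting $w=\Phi_u(s)$ yields
$$\Phi_{t+u}(s)=\Phi_u(s)+\varphi_t(\Phi_u(s))=s+\varphi_u(s)+\varphi_t(\Phi_u(s)),$$
and hence the basic relation $\varphi_{t+u}(s)=\varphi_u(s)+\varphi_t(\Phi_u(s))$. Since $c_t=1$, each $\varphi_t\in\mathcal{D}$ with $\sigma_u(\varphi_t)\leq 0$, so $\varphi_t$ converges absolutely in, say, $\C_1$; and because $c_u=1$ forces $\Re\Phi_u(s)\geq\Re s$ (by the Julia--Wolff--Carath\'eodory estimate recorded in the Remark after the definition of $\mathcal{G}$), the term $\varphi_t(\Phi_u(s))$ is a genuine Dirichlet series for $\Re s$ large.

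Next I would expand the composition term. For $\Re s$ large enough one may write
$$\varphi_t(\Phi_u(s))=\sum_{k\geq1}a_k(t)\,k^{-\Phi_u(s)},\qquad k^{-\Phi_u(s)}=\sum_{n\geq1}b_n^{k}(u)\,n^{-s}.$$
The key structural observation, and the step I expect to matter most, is the truncation of the inner sum: since $c_u=1$ we have $k^{-\Phi_u(s)}=k^{-s}\,k^{-\varphi_u(s)}$, and $k^{-\varphi_u(s)}$ is a Dirichlet series supported on indices $m\geq1$ (it lies in $\mathcal{D}$, as in the proof of Theorem \ref{menudeo}). Multiplying by $k^{-s}$ shifts every index from $m$ to $km\geq k$. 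Thus $b_n^{k}(u)=0$ whenever $k>n$, so only $k=1,\dots,n$ contribute to the coefficient of $n^{-s}$.

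Finally I would interchange the two summations, which is legitimate in a half-plane where all the series converge absolutely, to obtain
$$\varphi_t(\Phi_u(s))=\sum_{n\geq1}\Big(\sum_{k=1}^{n}a_k(t)\,b_n^{k}(u)\Big)n^{-s},$$
and then compare the coefficient of $n^{-s}$ on both sides of $\varphi_{t+u}=\varphi_u+\varphi_t\circ\Phi_u$, invoking uniqueness of the coefficients of a Dirichlet series (equivalently, the integral formula used in the proof of Lemma \ref{hache}). This gives exactly \eqref{formula}. The only genuine care needed is the bookkeeping of convergence to justify the rearrangement; the truncation to $k\leq n$ makes the inner sum finite for each fixed $n$, so no delicate estimate is required.
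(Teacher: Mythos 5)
Your proof is correct and follows essentially the same route as the paper: both derive the cocycle relation $\varphi_{t+u}(s)=\varphi_u(s)+\varphi_t(\Phi_u(s))$ from the semigroup law, expand $\varphi_t(\Phi_u(s))$ as a double Dirichlet series whose inner sum truncates at $k\leq n$ (since $b_n^{k}(u)=0$ for $k>n$, the indices of $k^{-\Phi_u(s)}$ starting at $k$), reorder in a half-plane of absolute convergence, and compare coefficients. The paper justifies the rearrangement by observing that $\varphi_t\circ\Phi_u=\varphi_{t+u}-\varphi_u\in\mathcal{D}$, hence converges absolutely in a sufficiently remote half-plane, which is the same level of care as your justification.
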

\begin{proof}
The semigroup structure of the $\Phi_t$ forces the Dirichlet series $\varphi_t$ to satisfy the following relation
\begin{equation}\label{quesi1}
    \varphi_{t+u}(s)=\varphi_t(s+\varphi_u(s))+\varphi_u(s),
\end{equation}
for all $t,u\geq 0$ and $s\in \C_{+}$.
Since the space $\mathcal{D}$ is a linear space, we have that the composition $\varphi_t(s+\varphi_u(s))$ is in $\mathcal{D}$ since it can be written as the difference of two Dirichlet series. This can also be deduced from Theorem \ref{menudeo}. In particular, this implies that in a sufficiently remote half-plane, the series converges absolutely. This observation allows us to reorder the series $\varphi_t(s+\varphi_u(s))$ at will, so that
\begin{equation*}
   \varphi_t(s+\varphi_u(s))=\sum_{n=1}^{\infty}a_n(t)n^{-\Phi_u(s)}= \sum_{n=1}^{\infty}a_n(t)\sum_{k=n}b_k^{n}(u)k^{-s}
    =\sum_{n=1}^{\infty}\left(
    \sum_{k=1}^na_k(t)b_n^{k}(u)
    \right)n^{-s}.
\end{equation*}
Knowing this and using the standard procedure to recover the coefficients of a Dirichlet series in \eqref{quesi1}, we find that for every $n$ and $t,u\geq0$ identity \eqref{formula} holds.
\end{proof}
\begin{proposition}\label{paquito}
Let $\{\Phi_{t}\}$ be a continuous semigroup in $\mathcal G$. With the notation introduced above, $\Re b_{1}\geq 0$ and 
$a_1(t)= b_{1 }t$,  for all $t\geq 0$. In addition, 
if the functions of the semigroup $\{\Phi_t\}_{t\geq0}$ are not automorphims of $\C_{+}$, then $\Re(b_1)>0$.
\end{proposition}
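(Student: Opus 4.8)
The plan is to read off all three assertions from the coefficient recursion in Lemma \ref{aes}, combined with the basic positivity fact that $\Re\varphi_t\geq 0$ on $\C_+$. The latter holds because $c_t=1$ by Proposition \ref{cste1}, whence $\Re\Phi_t(s)\geq\Re s$ by the Julia--Wolff--Carath\'eodory estimate recalled after the definition of $\mathcal G_\infty$, i.e. $\Re\varphi_t(s)\geq 0$.

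First I would prove $a_1(t)=b_1t$. Taking $n=1$ in \eqref{formula} and noting that $1^{-\Phi_u(s)}=1$, so that $b_1^1(u)=1$, the recursion collapses to the Cauchy functional equation $a_1(t+u)=a_1(t)+a_1(u)$ for all $t,u\geq 0$. By Lemma \ref{hache} the one-sided derivative $a_1'(0)=b_1$ exists, and additivity then gives, for every $t_0\geq 0$, $\lim_{h\to 0^+}\frac{a_1(t_0+h)-a_1(t_0)}{h}=\lim_{h\to 0^+}\frac{a_1(h)}{h}=b_1$. Since $a_1(0)=0$ (because $\varphi_0=0$), this forces $a_1(t)=b_1 t$ for all $t\geq 0$.

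Next, $\Re b_1\geq 0$. From $\Re\varphi_t(s)\geq 0$ on $\C_+$ and Lemma \ref{lemaso}, which identifies $a_1(t)=\lim_{\Re s\to+\infty}\varphi_t(s)$, we get $\Re a_1(t)\geq 0$; since $a_1(t)=b_1 t$ with $t>0$, this yields $\Re b_1\geq 0$. For the last assertion I would argue by contraposition: assuming $\Re b_1=0$, I will show every $\Phi_t$ is an automorphism of $\C_+$. Set $\tilde H(s):=H(s)-b_1=\sum_{n\geq 2}b_n n^{-s}$; since the generator $H$ sends $\C_+$ into $\overline{\C_+}$ we have $\Re H\geq 0$, and as $\Re b_1=0$ this gives $\Re\tilde H\geq 0$ on $\C_+$. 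Suppose $\tilde H\not\equiv 0$ and let $N=\min\{n\geq 2:\,b_n\neq 0\}$. By Lemma \ref{lemaso}, $N^s\tilde H(s)\to b_N$ as $\Re s\to+\infty$, and this convergence is uniform in $\Im s$ because the tail $\sum_{n>N}b_n(N/n)^{s}$ is dominated in modulus by a quantity depending only on $\Re s$. Fix $\sigma_0$ with $|N^s\tilde H(s)-b_N|<|b_N|/2$ whenever $\Re s>\sigma_0$. Choosing $t_0\in\R$ with $e^{-it_0\ln N}=-b_N/|b_N|$ and taking $s=\sigma+it_0$ with $\sigma>\sigma_0$, one has $\tilde H(s)=N^{-\sigma}e^{-it_0\ln N}\bigl(N^s\tilde H(s)\bigr)$, hence $\Re\tilde H(s)=-N^{-\sigma}\Re\!\bigl(\tfrac{b_N}{|b_N|}N^s\tilde H(s)\bigr)<-N^{-\sigma}|b_N|/2<0$, contradicting $\Re\tilde H\geq 0$. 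Therefore $\tilde H\equiv 0$, so $H\equiv b_1=i\beta$ with $\beta=\Im b_1$; solving the Cauchy problem \eqref{cauchy} with constant generator gives $\Phi_t(s)=s+i\beta t$, a translation by a purely imaginary number, which is an automorphism of $\C_+$. Thus, if the $\Phi_t$ are not automorphisms, then $\Re b_1>0$.

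I expect the rotation argument of the third assertion to be the main obstacle. Its content is that a Dirichlet series with nonnegative real part on $\C_+$ cannot have its first non-constant term dominate as $\Re s\to+\infty$: the factor $N^{-it}$ would otherwise rotate the leading term so as to drive the real part negative along a suitable vertical line. Making this precise requires exactly the uniformity in $\Im s$ of the asymptotics in Lemma \ref{lemaso}, which I would state and use explicitly rather than merely the pointwise limit.
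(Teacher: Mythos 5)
Your proposal is correct in substance, but it takes a genuinely different route from the paper on the main point. For $a_1(t)=b_1t$, both you and the paper reduce to the Cauchy equation via Lemma \ref{aes} with $n=1$; the paper then solves it using measurability of $t\mapsto a_1(t)$ (a pointwise limit of continuous functions, via Lemma \ref{lemaso}) together with \cite[Theorem 8.1.11]{manoloal}, whereas you use the derivative at $0$ supplied by Lemma \ref{hache}. Note that your stated justification --- right derivative everywhere equal to $b_1$ plus $a_1(0)=0$ forces linearity --- is not valid for arbitrary functions (it needs continuity), but additivity repairs it in one line: $a_1(t)=na_1(t/n)=t\cdot\frac{a_1(t/n)}{t/n}\to b_1t$ as $n\to\infty$. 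For the strict inequality, the paper fixes $t>0$, expands $\varphi_t(s)=a_1+a_NN^{-s}+g(s)$ and chooses $s_0$ on a far vertical line so that $\Re(a_NN^{-s_0})=-|a_N|N^{-\Re s_0}$, contradicting $\Re\varphi_t\geq 0$; you instead argue by contraposition at the level of the generator, showing $\Re b_1=0$ forces $\tilde H=H-b_1\equiv 0$ by the same rotation mechanism, hence $H\equiv i\beta$ and $\Phi_t(s)=s+i\beta t$. Your version buys two things: it cleanly covers the edge case where $\varphi_t$ is a non-imaginary constant (then no coefficient $a_N\neq 0$ with $N\geq 2$ exists, a case the paper's proof passes over silently, though there the conclusion is immediate), and it yields the sharper structural dichotomy that $\Re b_1=0$ forces the whole semigroup to consist of vertical translations. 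The cost is the heavier input that $H$ is a Dirichlet series (Theorem \ref{gordo2}), which the paper's framing via the coefficients $b_n$ already presupposes anyway.

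Two small corrections. First, your rotation is off by a conjugate: with $e^{-it_0\ln N}=-b_N/|b_N|$ one gets $\Re\bigl(\tfrac{b_N}{|b_N|}N^s\tilde H(s)\bigr)\approx \Re(b_N^2)/|b_N|$, which need not be positive (take $b_N=i$), so the displayed inequality fails as written; you need $e^{-it_0\ln N}=-\overline{b_N}/|b_N|$, so that $\tfrac{\overline{b_N}}{|b_N|}N^s\tilde H(s)\to |b_N|>0$ and $\Re\tilde H(\sigma+it_0)\leq -N^{-\sigma}|b_N|/2<0$ for $\sigma$ large. Second, the uniformity in $\Im s$ you flag as the main obstacle is not actually needed: you fix $t_0$ first and then let $\sigma\to+\infty$ along the single vertical line $\Im s=t_0$, so the statement of Lemma \ref{lemaso} suffices as it stands --- this is exactly how the paper uses it with its point $s_0$.
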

\begin{proof}
Since, $\lim_{\text{Re}(s)\to+\infty}\varphi_t(s)=a_1(t)$ and $\varphi_t:\C_+\to\C_+$, we deduce that the map $t\mapsto a_{1}(t)$ is measurable and $\text{Re}(a_1(t))\geq0$ for every $t\geq0$. Now, using \eqref{formula} for $n=1$, we find that
\[
a_1(u+t)=a_1(u)+a_1(t).
\]
That is, $a_1(\cdot)$ is additive and measurable. Then, $a_1(t)=\lambda t$ for some $\lambda \in \C$ (see, i.e., \cite[Theorem 8.1.11]{manoloal}). Since, $\Re(a_1(t))\geq0$, we have that $\text{Re}(\lambda)\geq0$. By Lemma \ref{hache}, $\lambda=b_{1}$.

Assume now that the functions of the semigroup $\{\Phi_t\}_{t\geq0}$ are not automorphims of $\C_{+}$. Fix $t>0$. There exists $N\geq2$ such that $a_{N}\neq 0$ and 
\[
\varphi_t(s)=a_1+a_NN^{-s}+g(s) 
\]
with $g(s)=o(N^{-\Re(s)})$. 
Given $0<\varepsilon<|a_N|$, we can take $\nu>0$ such that, for every $s\in\C_{\nu}$, it holds $|g(s)|\leq\varepsilon N^{-\Re(s)}$. Assume now that $\Re(a_1(t))=0$. Then,
\[
0<\Re(\varphi_t(s))=\Re(a_1(t))+\Re(a_NN^{-s})+\Re g(s)=\Re(a_NN^{-s})+\Re g(s).
\]
Taking $s_{0}$ such that $\Re(a_NN^{-s_0})=-|a_N|N^{-\Re(s_0)}$ with $\Re s_{0}>\nu$, we have
\begin{equation*}
\begin{split}
0<\Re(a_NN^{-s_0})+\Re g(s_{0})&=-|a_N|N^{-\Re(s_0)}+\Re g(s_{0})\\
&\leq -|a_N|N^{-\Re(s_0)}+\varepsilon N^{-\Re(s_0)}=-N^{\Re(s_0)}(|a_N|-\varepsilon).
\end{split}
\end{equation*}
A contradiction. Therefore, $\Re(a_1(t))>0$ and $\Re b_{1}>0$.
\end{proof}

\begin{remark}
Related to the statement of the above proposition, it is worth mentioning that if $\Phi_{t_{0}}$ is an automorphism of $\C_{0}$ for one $t_{0}>0$, then $\Phi_{t}$ is an automorphism of $\C_{0}$ for all $t\geq 0$ (see \cite[Theorem 8.2.4]{manoloal}).
\end{remark}

Going back to the Koenigs function, we will prove that its derivative is a Dirichlet series. This result lies in the next property of the Banach algebra structure of $\mathcal{H}^{\infty}$. For a proof see, i.e., \cite[Pages 148, 149]{queffelecs}.

\begin{theorem}\label{invertible} 
The invertible elements of $\mathcal{H}^{\infty}$ are the functions $f\in \mathcal{H}^{\infty}$ such that there is $\delta>0$ with  $|f|>\delta$.
\end{theorem}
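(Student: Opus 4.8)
The plan is to prove the two implications separately; the forward one is immediate and all the content lies in showing that a function bounded away from zero is invertible. For the easy direction, suppose $f$ is invertible in $\mathcal{H}^{\infty}$, so that there is $g\in\mathcal{H}^{\infty}$ with $fg\equiv 1$. Then $|f(s)|=1/|g(s)|\geq 1/\|g\|_{\mathcal{H}^{\infty}}$ for every $s\in\C_+$, and since $g$ is bounded and not identically zero this lower bound is a strictly positive constant; taking $\delta=1/(2\|g\|_{\mathcal{H}^{\infty}})$ yields $|f|>\delta$ on $\C_+$.

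For the converse, assume $|f(s)|\geq\delta>0$ on $\C_+$ and put $g=1/f$. Then $g$ is analytic on $\C_+$ with $|g|\leq 1/\delta$, so by the very definition of $\mathcal{H}^{\infty}$ it suffices to check that $g$ admits a Dirichlet series expansion in some half-plane: this will simultaneously place $g$ in $\mathcal{H}^{\infty}$ and give $fg\equiv 1$. Writing $f(s)=\sum_{n\geq 1}a_n n^{-s}$, I would first observe that $a_1\neq 0$: by Lemma \ref{lemaso}, if $a_1=0$ then $f(s)\to 0$ as $\Re s\to+\infty$, contradicting $|f|\geq\delta$; hence $f(s)\to a_1$ and $|a_1|\geq\delta$.

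Next I would factor $f=a_1(1+h)$ with $h(s)=\sum_{n\geq 2}(a_n/a_1)n^{-s}$. Since $f$ is bounded on $\C_+$, Theorem \ref{queffelecs} gives $\sigma_u(f)\leq 0$, whence $\sigma_a(f)\leq 1$ and $\sum_{n\geq 2}|a_n|n^{-\sigma}\to 0$ as $\sigma\to+\infty$. Thus there is $\sigma_0$ with $\rho:=\sum_{n\geq 2}|a_n/a_1|\,n^{-\sigma_0}<1$, and on $\C_{\sigma_0}$ we may expand
\[
g=\frac{1}{a_1}\sum_{k=0}^{\infty}(-h)^{k}.
\]
Each power $h^{k}=\sum_{n\geq 2^{k}}d_n^{(k)}n^{-s}$ is an absolutely convergent Dirichlet series whose smallest frequency is $2^{k}$, and $\sum_{k}\sum_{n}|d_n^{(k)}|\,n^{-\sigma_0}\leq\sum_{k}\rho^{k}<\infty$. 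Hence the double series converges absolutely on $\C_{\sigma_0}$, and collecting, for each fixed $n$, the finitely many contributions coming from $k\leq\log_2 n$ produces a genuine Dirichlet series representing $g$ on $\C_{\sigma_0}$. Being bounded on $\C_+$ and a Dirichlet series on $\C_{\sigma_0}$, the function $g$ lies in $\mathcal{H}^{\infty}$, which finishes the proof.

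The only delicate point is this last rearrangement of the Neumann series $\sum_k(-h)^{k}$ into a single Dirichlet series; this is precisely what the geometric bound $\rho<1$ together with the absolute convergence of each power secures, so the difficulty is essentially bookkeeping rather than substance. A more structural alternative would invoke the isometric Banach-algebra isomorphism $\mathcal{H}^{\infty}\cong H^{\infty}(\mathbb{D}^{\infty})$ and transfer the invertibility question to the polydisc, but the elementary Neumann-series argument above keeps the proof self-contained.
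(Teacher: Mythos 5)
Your proof is correct, but it takes a different route from the paper, which in fact does not prove Theorem \ref{invertible} at all: it simply refers to \cite[pp. 148--149]{queffelecs}, where invertibility in the Banach algebra $\mathcal{H}^{\infty}$ is treated in the spirit of the Hewitt--Williamson theorem \cite{hedwig} on reciprocals of absolutely convergent Dirichlet series. That Wiener-type result is genuinely deep, because it produces an absolutely convergent Dirichlet series for $1/f$ on the \emph{closed} half-plane where $f$ itself is only assumed absolutely convergent and bounded below. Your argument cleverly sidesteps all of this by exploiting the definition of $\mathcal{H}^{\infty}$ used in the paper: membership requires only boundedness on $\C_+$ (immediate from $|f|\geq\delta$, since $|1/f|\leq 1/\delta$) together with a Dirichlet expansion in \emph{some} half-plane, however remote. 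Far to the right the tail $\sum_{n\geq2}|a_n|n^{-\sigma}$ is as small as you like --- your chain $\sigma_u(f)\leq 0$ (Theorem \ref{queffelecs} applies since a bounded function trivially maps into a half-plane $\C_\nu$), hence $\sigma_a(f)\leq 1$, is exactly right --- and then the Neumann series $\sum_k(-h)^k$ with $\rho<1$ does the job; the rearrangement into a single Dirichlet series is legitimate for the reason you give (frequencies of $h^k$ are at least $2^k$, so each coefficient receives only finitely many contributions, and the weighted $\ell^1$ bound $\sum_k\rho^k<\infty$ justifies Fubini). The use of Lemma \ref{lemaso} to get $a_1\neq 0$ and $|a_1|\geq\delta$ is also the correct first step. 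In short: your proof is self-contained and elementary and proves exactly what the paper needs, whereas the citation buys the stronger boundary statement (inverse in the Wiener algebra of Dirichlet series on $\overline{\C_+}$), which the paper never actually uses.
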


\begin{theorem}\label{1/H}
Let $\{\Phi_t\}_{t\geq0}$ be a non-trivial continuous semigroup of analytic functions in the class $\mathcal{G}$ and $H$ its infinitesimal generator. Then, $1/H$ is a Dirichlet series.
\end{theorem}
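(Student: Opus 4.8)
The plan is to show that the first coefficient $b_1$ of the Dirichlet series $H$ does not vanish, and then to invert $H$ in a sufficiently remote half-plane by means of the Banach algebra structure of $\mathcal{H}^{\infty}$.

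First I would record, via Theorem \ref{gcoro}, that $H$ is a Dirichlet series, and write $H(s)=\sum_{n\geq1}b_n n^{-s}$. The crucial preliminary step is to check that $b_1\neq0$. If the functions $\Phi_t$ are not automorphisms of $\C_+$, then Proposition \ref{paquito} gives $\Re(b_1)>0$, hence $b_1\neq0$. If instead the semigroup consists of automorphisms, then each $\Phi_t$ has Denjoy--Wolff point $\infty$ (see \eqref{DW}), so it fixes $\infty$ and is therefore affine; since $c_t=1$ by Proposition \ref{cste1} and the map is onto $\C_+$, it must be a vertical translation $\Phi_t(s)=s+i\beta_t$. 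Consequently $H$ is a purely imaginary constant $i\beta$, and the non-triviality of the semigroup forces $\beta\neq0$, i.e. $b_1\neq0$. In either case $b_1\neq0$.

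Next I would use Lemma \ref{lemaso} with $m=1$ to get $H(s)\to b_1$ as $\Re(s)\to+\infty$. Quantitatively, since $\sigma_u(H)\leq0$ by Theorem \ref{queffelecs}, one has $\sigma_a(H)\leq1$, so the series converges absolutely for $\Re(s)>1$ and the tail $\sum_{n\geq2}|b_n|n^{-\Re(s)}$ tends to $0$ as $\Re(s)\to+\infty$. Hence there is $\sigma>1$ such that $|H(s)-b_1|<|b_1|/2$, and therefore $|H(s)|>|b_1|/2>0$, for every $s\in\C_\sigma$. By Theorem \ref{gordo2} we also know $H\in\mathcal{H}^{\infty}(\C_\sigma)$, so $H$ is a bounded Dirichlet series on $\C_\sigma$ that is bounded away from zero there.

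Finally I would invert $H$ inside the Banach algebra $\mathcal{H}^{\infty}(\C_\sigma)$. The horizontal translation $g\mapsto\big(w\mapsto g(w+\sigma)\big)$ identifies $\mathcal{H}^{\infty}(\C_\sigma)$ isometrically, and as a Banach algebra of Dirichlet series, with $\mathcal{H}^{\infty}$, carrying the infimum of $|g|$ over $\C_\sigma$ to the infimum over $\C_+$. Thus Theorem \ref{invertible} applies and shows that an element bounded away from $0$ is invertible; transporting back, $1/H\in\mathcal{H}^{\infty}(\C_\sigma)$, so in particular $1/H$ is a Dirichlet series, as claimed. The main obstacle is the first step, namely verifying that the constant term $b_1$ does not vanish: the automorphism case must be isolated, because there Proposition \ref{paquito} only yields $\Re(b_1)\geq0$ and one needs the explicit identification of $H$ as a nonzero imaginary constant; once $b_1\neq0$ is secured, the remainder is a routine Banach-algebra inversion.
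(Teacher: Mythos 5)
Your proposal is correct and follows essentially the same route as the paper's proof: Proposition \ref{paquito} together with the automorphism dichotomy to secure $b_1\neq 0$, Lemma \ref{lemaso} to bound $|H|$ below by $|b_1|/2$ on a remote half-plane where $H\in\mathcal{H}^{\infty}(\C_\sigma)$, and inversion via Theorem \ref{invertible}. Your only departures are expository: the paper dispatches the automorphism case at once ($H$ is constant, so the result is clear) and simply asserts that Theorem \ref{invertible} holds for the algebra $\mathcal{H}^{\infty}(\C_\sigma)$, whereas you make the vertical-translation identification $\Phi_t(s)=s+i\beta t$ and the isometric translation isomorphism with $\mathcal{H}^{\infty}$ explicit.
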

\begin{proof}
We know that $H\in\mathcal{H}^{\infty}(\C_{\sigma})$ for all $\sigma>0$. The statement of Theorem \ref{invertible} holds for this algebra. Let $b_1$ be the first coefficient of $H$. Then, by Proposition \ref{paquito}, $\Re b_1\geq 0$. If the semigroup is formed by automorphisms, then $H$ is constant and the result  is clear. Otherwise, $\Re b_1>0$ and $H:\C_{+}\to \C_{+}$. Hence, $1/H$ is holomorphic.  We know that there is $\nu>0$ such that $|b_{1}-H(s)|<|b_{1}|/2$, whenever $\Re s>\nu$ . So that, $|H(s)|>|b_{1}|/2$. By Theorem \ref{invertible}, $1/H$ is a Dirichlet series in $\C_{\sigma}$. Thus $1/H$ is a Dirichlet series in $\C_{+}$. 
\end{proof}

Bearing in mind the above theorem, equation \eqref{derivadaKoenigs}, and Theorem \ref{gcoro}, we deduce:

\begin{corollary}\label{Koenigs}
Let $h$ be a holomorphic function in $\C_{+}$. Then $h$ is the Koenigs function of a continuous semigroup in $\mathcal G$ if and only if $h'$ is a Dirichlet series satisfying $h'(\C_{+})\subset \overline {\C_{+}}\setminus \{0\}$. \end{corollary}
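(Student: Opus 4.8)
The bridge between the two conditions is relation \eqref{derivadaKoenigs}, which identifies $h'$ with $1/H$, where $H$ is the infinitesimal generator of the underlying semigroup. The plan is to move back and forth between $h'$ and $H$ through this identity, using Theorem \ref{1/H} in one direction and Theorem \ref{gcoro} in the other.

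For the direct implication, suppose $h$ is the Koenigs function of a continuous semigroup $\{\Phi_t\}$ in $\mathcal G$. A Koenigs function can only exist for a non-trivial semigroup, so Theorem \ref{1/H} applies and gives that $1/H=h'$ is a Dirichlet series. It then remains to locate the range of $h'$. If the semigroup consists of automorphisms, then $H$ is a non-zero constant $b$ with $\Re b\geq 0$, so $h'\equiv 1/b\in\overline{\C_+}\setminus\{0\}$; otherwise $H$ maps $\C_+$ into $\C_+$ (a non-constant $H$ valued in $\overline{\C_+}$ has open image, which forces it inside $\C_+$), and then $\Re(1/H(s))=\Re H(s)/|H(s)|^2>0$. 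In either case $h'(\C_+)\subset\overline{\C_+}\setminus\{0\}$, as required.

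Conversely, I would assume $h'\in\mathcal{D}$ with $h'(\C_+)\subset\overline{\C_+}\setminus\{0\}$ and set $H:=1/h'$. Since $1/w\in\overline{\C_+}\setminus\{0\}$ whenever $w\in\overline{\C_+}\setminus\{0\}$, the map $H$ sends $\C_+$ into $\overline{\C_+}$. The crucial point is to check that $H$ is again a Dirichlet series, and this is where I expect the main difficulty. If $h'$ is constant there is nothing to do. If not, the open mapping theorem gives $h'(\C_+)\subset\C_+$, so Theorem \ref{queffelecs} yields $h'\in\mathcal{H}^{\infty}(\C_{\varepsilon})$ for every $\varepsilon>0$; it then suffices to bound $h'$ away from $0$ on some half-plane $\C_{\nu}$ and invoke Theorem \ref{invertible}. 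To produce such a bound I would analyse the behaviour of $h'$ at infinity exactly as in Proposition \ref{paquito}: writing $h'(s)=b_1+b_N N^{-s}+o(N^{-\Re s})$, Lemma \ref{lemaso} forces the first coefficient $b_1$ to be non-zero (a vanishing $b_1$ would let the leading term $b_N N^{-s}$ sweep through values of negative real part, contradicting $h'(\C_+)\subset\C_+$), and the same oscillation argument rules out $\Re b_1=0$; hence $b_1\in\C_+$ and $|h'(s)|\geq |b_1|/2$ for $\Re s$ large. Theorem \ref{invertible}, applied in $\mathcal{H}^{\infty}(\C_{\nu})$, then gives $H=1/h'\in\mathcal{H}^{\infty}(\C_{\nu})\subset\mathcal{D}$.

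With $H\in\mathcal{D}$ and $H(\C_+)\subset\overline{\C_+}$ in hand, Theorem \ref{gcoro} produces a continuous semigroup $\{\Phi_t\}$ in $\mathcal G$ having $H$ as its infinitesimal generator; since every such semigroup has Denjoy--Wolff point $\infty$, it is non-elliptic and admits a Koenigs function $\tilde h$. By \eqref{derivadaKoenigs} we get $\tilde h'=1/H=h'$, so $h$ and $\tilde h$ differ by an additive constant, and since the defining identity $\tilde h\circ\Phi_t=\tilde h+t$ is stable under adding constants, $h$ is itself a Koenigs function of $\{\Phi_t\}$. The main obstacle throughout is the invertibility step: confirming that $1/h'$ is still a Dirichlet series, which hinges on controlling the leading coefficient of $h'$ at infinity.
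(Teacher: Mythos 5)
Your argument is correct, and its overall skeleton (pass between $h'$ and $H$ via \eqref{derivadaKoenigs}, quote Theorem \ref{1/H} one way and Theorem \ref{gcoro} the other) is the paper's. The one genuine divergence is in the converse, at exactly the step you flag as the main obstacle: showing $H=1/h'$ is again a Dirichlet series. You re-run the machinery behind Theorem \ref{1/H} directly on $h'$ --- the oscillation argument of Proposition \ref{paquito} to force the leading coefficient $b_1$ of $h'$ to be non-zero (indeed $\Re b_1>0$ when $h'$ is non-constant), then boundedness from Theorem \ref{queffelecs} and invertibility in $\mathcal{H}^{\infty}(\C_{\nu})$ via Theorem \ref{invertible}. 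The paper avoids this duplication with a small trick you missed: since $h'\in\mathcal{D}$ and $h'(\C_+)\subset\overline{\C_+}$, Theorem \ref{gcoro} says that \emph{$h'$ itself} is the infinitesimal generator of a continuous semigroup in $\mathcal{G}$, non-trivial because $h'$ never vanishes; Theorem \ref{1/H} applied to that auxiliary semigroup then hands you $1/h'\in\mathcal{D}$ verbatim, with no need to revisit coefficients or invertibility. Your version is more self-contained (it never invokes the auxiliary semigroup, and incidentally makes explicit that $\Re b_1>0$ in the non-automorphic case), at the cost of repeating, for $h'$, essentially the whole proof of Theorem \ref{1/H}; the paper's version buys a two-line deduction by exploiting the symmetry of the condition ``Dirichlet series with values in $\overline{\C_+}\setminus\{0\}$'' under $w\mapsto 1/w$. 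Both complete the proof identically from there: $H$ generates a continuous semigroup in $\mathcal{G}$, necessarily with Denjoy--Wolff point $\infty$ and hence non-elliptic, and its Koenigs map $\tilde h$ satisfies $\tilde h'=1/H=h'$, so $h=\tilde h+c$ solves the same Abel equation.
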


That is,  $h$ is the Koenigs function of a continuous semigroup in $\mathcal G$ if and only if, up to an additive constant, $h$ belongs to the class $\mathcal{K}$ given by
\begin{equation*}
    \mathcal{K}:=\{h:\C_+\to\C :\,  \Re (h')\geq0, \, h(s)=d_1s-\sum_{n\geq2}\frac{d_n}{\ln ( n)}n^{-s}, \, d_{n}\neq 0 \textrm{ for some } n\}.
\end{equation*}

Many properties of a continuous semigroup can be rewritten in terms of the geometry of the image of the Koenigs function. Denote by $\omega$ the hyperbolic distance in $\C_{+}$. Let us recall that a continuous semigroup with no fixed points in $\C_{+}$ is of {\sl positive hyperbolic step} if there is $s_{0}\in \C_{+}$ such that $\lim_{t\to +\infty} \omega (\Phi_{t}(s_{0}), \Phi_{t+1}(s_{0}))>0$. If such a point exists, then $\lim_{t\to +\infty} \omega (\Phi_{t}(s), \Phi_{t+1}(s))>0$ for all $s$. Otherwise, the semigroup is said to be of {\sl zero hyperbolic step}. This classification provides information about the speed of convergence of the trajectories of the semigroup to the Denjoy-Wolff point. It can be proved that a parabolic semigroup is of zero hyperbolic step if and only if $h(\C_{+})$ is not contained in a horizontal half-plane \cite[Proposition 3.2]{CDP} (see also \cite[Theorem 9.3.5]{manoloal}). 

\begin{lemma}
Given $h\in\mathcal{K}$,
\begin{enumerate}[a)]
 \item If $\Re (d_1)=0$, then $h(s)=d_1s$ and $h(\C_+)$ is a horizontal half-plane.
    \item If $\Re(d_1)>0$, then $h(\C_+)$ contains a non-horizontal half-plane. In particular, it is not contained in a horizontal half-plane.
\end{enumerate}
\end{lemma}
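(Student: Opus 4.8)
The plan is to work throughout with the derivative $h'(s)=d_1+\sum_{n\ge 2}d_n n^{-s}$, which by Corollary \ref{Koenigs} is a Dirichlet series with $\sigma_u(h')\le 0$ (so it converges and is bounded on each $\C_{\varepsilon}$) and satisfies $\Re (h')\ge 0$ on $\C_+$. By Lemma \ref{lemaso}, $d_1=\lim_{\Re s\to+\infty}h'(s)$, so $\Re(d_1)\ge 0$, and it is exactly the vanishing of $\Re(d_1)$ that separates the two cases. I would phrase both parts as statements about $h'$ and then integrate to recover $h$.

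For part a), I would suppose $\Re(d_1)=0$, so that for every $s\in\C_+$ one has $\Re\big(\sum_{n\ge2}d_n n^{-s}\big)=\Re(h'(s))\ge0$. I would then argue exactly as in Proposition \ref{paquito}: if some $d_n$ with $n\ge2$ were nonzero, set $N=\min\{n\ge2:d_n\ne0\}$ and use Lemma \ref{lemaso} to write $\sum_{n\ge2}d_n n^{-s}=d_N N^{-s}+R(s)$ with $|R(s)|\le\varepsilon N^{-\Re s}$ for $\Re s$ large, where $0<\varepsilon<|d_N|$. Choosing $\Im s$ so that $d_N N^{-s}=-|d_N|N^{-\Re s}$ (possible for each large $\Re s$) would give $\Re(h'(s))\le -(|d_N|-\varepsilon)N^{-\Re s}<0$, a contradiction. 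Hence $d_n=0$ for all $n\ge2$ and $h(s)=d_1s$. Since membership in $\mathcal{K}$ forbids $h$ from being constant, necessarily $d_1\ne0$; writing $d_1=i\beta$ with $\beta\in\R\setminus\{0\}$, the image $h(\C_+)=i\beta\,\C_+$ is the upper (if $\beta>0$) or the lower (if $\beta<0$) half-plane, that is, a horizontal half-plane.

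For part b), I would suppose $\Re(d_1)>0$ and write $h(s)=d_1s+g(s)$ with $g(s)=-\sum_{n\ge2}\frac{d_n}{\ln n}n^{-s}$. By Lemma \ref{lemaso} $g(s)\to0$ as $\Re s\to+\infty$, hence $M_\sigma:=\sup_{\C_{\sigma}}|g|\to0$ as $\sigma\to+\infty$. Since $\arg(d_1)\in(-\pi/2,\pi/2)$, the linear map $s\mapsto d_1s$ carries the vertical boundary $\{\Re s=\sigma\}$ to a line that is not horizontal, so $d_1\,\C_{\sigma}$ is a non-horizontal half-plane. The key step is to transfer this to $h$ by a Rouché argument: fixing $w_0$ in the retracted half-plane $d_1\,\C_{\sigma+M_\sigma/|d_1|}$, its preimage $s_*=w_0/d_1$ lies in $\C_{\sigma}$ at distance $>M_\sigma/|d_1|$ from the boundary, so on a circle $|s-s_*|=r$ with $M_\sigma/|d_1|<r<\Re s_*-\sigma$ one has $|d_1 s-w_0|=|d_1|r>M_\sigma\ge|g(s)|$; Rouché's theorem then gives $h(s)-w_0$ a zero inside the disk, i.e. $w_0\in h(\C_{\sigma})\subset h(\C_+)$. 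Thus $h(\C_+)\supseteq d_1\,\C_{\sigma+M_\sigma/|d_1|}$, a non-horizontal half-plane; having a non-horizontal boundary line, it contains points with $\Im w\to\pm\infty$ and so is not contained in any horizontal half-plane.

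The main obstacle is the passage in part b) from the image $d_1\,\C_{\sigma}$ of the linear part to the image of the full map $h$: one must control the bounded perturbation $g$ uniformly and set up the Rouché argument so that the contour stays inside $\C_{\sigma}$ while the linear term dominates $g$ on it, and the bookkeeping of the ``retraction'' of the half-plane by $M_\sigma/|d_1|$ is where care is needed. Part a), by contrast, is essentially a replay of the extremal argument already used in Proposition \ref{paquito}, so I expect it to be routine.
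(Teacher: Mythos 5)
Your proof is correct; part b) coincides with the paper's own argument, while part a) takes a genuinely different, more elementary route. For b), the paper runs the identical Rouch\'e scheme: with $f(s)=d_1s$ and $g=h-f$, it fixes $\varepsilon>0$ and $\sigma>0$ with $|g|<\varepsilon$ on $\C_{\sigma}$, a radius $R$ with $|d_1|R>\varepsilon$, and centers $a\in\C_{\sigma'}$ with $\sigma'>\sigma+R$, concluding $h(\C_+)\supseteq d_1\C_{\sigma'}$; your bookkeeping via $M_\sigma=\sup_{\C_\sigma}|g|$ and the retracted half-plane $d_1\C_{\sigma+M_\sigma/|d_1|}$ is the same estimate in different clothes (and note you only need $M_\sigma<\infty$, not $M_\sigma\to0$). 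For a), the paper argues dynamically: it observes that $H=1/h'$ is the infinitesimal generator of a continuous semigroup in $\mathcal G$ whose leading coefficient $1/d_1$ is purely imaginary, and then invokes Proposition \ref{paquito} to force $H$ constant, hence $h(s)=d_1s$. You instead replay the extremal first-nonzero-coefficient argument from the proof of Proposition \ref{paquito} directly on the Dirichlet series $h'$: with $\Re(d_1)=0$ and $\Re(h')\geq0$, isolating $d_N$ via Lemma \ref{lemaso} and choosing $\Im s$ to rotate the phase of $d_NN^{-s}$ yields $\Re(h'(s))<0$, a contradiction unless $d_n=0$ for all $n\geq2$. What each buys: the paper's route is shorter given that Proposition \ref{paquito} is on record, whereas yours is self-contained, bypasses the semigroup machinery and the passage through $1/h'$ entirely, and isolates the purely function-theoretic fact that a Dirichlet series with nonnegative real part and purely imaginary leading coefficient is constant. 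Both versions then correctly finish a) by noting that membership in $\mathcal{K}$ forces $d_1\neq0$, so $h(\C_+)=i\beta\,\C_+$ is a horizontal half-plane.
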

\begin{proof}
We start by proving $a)$. If $\text{Re}(d_1)=0$, then $H=1/h'$ is the infinitesimal generator of a semigroup in the class $\mathcal G$ such that
$$
H(s)=\frac{1}{d_{1}}+\sum_{n\geq 2} c_{n}n^{-s}.
$$
By Proposition \ref{paquito}, $c_{n}=0$ for all $n\geq 2$ and $h(s)=d_{1}s$.  Then $h$ is a dilation followed by a rotation of angle $\pm\pi/2$. Therefore, it takes any vertical half-plane $\C_{\nu}$, $\nu\in\R$, into a horizontal half-plane. 

\ 
Now, we prove $b)$  using Rouch\'e's Theorem. Set $f(s)=d_1s$ and $g(s)=-\sum_{n\geq2}\frac{d_n}{\ln(n)} n^{-s}$. Let us fix $\varepsilon>0$. There exists $\sigma>0$ such that for every $s\in\C_{\sigma}$
\begin{equation*}
    |h(s)-d_1s|=|g(s)|=\left|\sum_{n\geq2}\frac{d_n}{\ln(n)} n^{-s}\right|<\varepsilon.
\end{equation*}
Take $R>0$ such that $|d_{1}|R>\varepsilon$. Fix $\sigma'>\sigma+R$ and $a\in \C_{\sigma'}$.
Set $f_1(s)=h(s)-d_1a$ and $f_2(s)=f(s)-d_1a=d_{1}(s-a)$. Now, for every $s\in\C_{\sigma}$
\begin{equation*}
    |f_1(s)-f_2(s)|=|h(s)-f(s)|=|g(s)|<\varepsilon
\end{equation*}
and, for every $s\in \partial D(a,R)$, 
\begin{equation*}
    |f_2(s)|=|f(s)-d_1a|=|d_1(s-a)|=|d_1|R.
\end{equation*}
Thus
\[
|f_1(s)-f_2(s)|<|f_2(s)|,\quad \text{for every $s\in\partial D(a,R)$}.
\]
Then, by Rouch\'e's Theorem, $f_1$ and $f_2$ have the same number of zeros inside $D(a,R)$. 
Hence, since $f_2(a)=0$, we deduce that there exists $c\in D(a,R)\subset \C_{\sigma}$, such that $f_1(c)=0$. Then, $h(c)=d_1a$. Therefore  $h(\C_{+})$ contains $d_{1}\C_{\sigma'}$. Since $\Re d_{1}>0$, $d_{1}\C_{\sigma'}$ is a non-horizontal half-plane.
\end{proof}

\begin{proposition}\label{Prop:zerohypstep}
Let $\{\Phi_{t}\}$ be a continuous semigroup in the class $\mathcal G$ which are not automorphims of $\C_{+}$. Then the semigroup is of zero hyperbolic step. 
\end{proposition}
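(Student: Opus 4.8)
The plan is to reduce the statement to the geometric dichotomy recorded in the Lemma immediately preceding this Proposition, combined with the hyperbolic-step criterion recalled just before it. From Section \ref{sec:semigroups} we know that every continuous semigroup in $\mathcal G$ has Denjoy-Wolff point $\infty$ and consists of parabolic maps; in particular the semigroup is non-elliptic and parabolic. By \cite[Proposition 3.2]{CDP} (see also \cite[Theorem 9.3.5]{manoloal}), such a semigroup is of zero hyperbolic step if and only if its Koenigs function $h$ has image $h(\C_+)$ not contained in any horizontal half-plane. Thus it suffices to verify this geometric condition.

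First I would locate $h$ inside the class $\mathcal K$. By Corollary \ref{Koenigs}, up to an additive constant we may write $h(s)=d_1s-\sum_{n\geq2}\frac{d_n}{\ln(n)}n^{-s}$ with $\Re(h')\geq0$; the additive constant merely translates $h(\C_+)$ and so does not affect whether the image lies in a horizontal half-plane. Differentiating gives $h'(s)=d_1+\sum_{n\geq2}d_nn^{-s}$, so that $d_1$ is the leading coefficient of the Dirichlet series $h'=1/H$.

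The crucial step is to show $\Re(d_1)>0$. Since the functions of the semigroup are not automorphisms of $\C_+$, Proposition \ref{paquito} yields $\Re(b_1)>0$, where $b_1$ denotes the first coefficient of $H$. By Lemma \ref{lemaso}, $H(s)\to b_1$ and $h'(s)\to d_1$ as $\Re(s)\to+\infty$; since $h'=1/H$ this forces $d_1=1/b_1$. Because $\Re(b_1)>0$ implies $\Re(1/b_1)=\Re(b_1)/|b_1|^2>0$, we conclude that $\Re(d_1)>0$.

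Part b) of the preceding Lemma then shows that $h(\C_+)$ contains a non-horizontal half-plane, hence is not contained in any horizontal half-plane, and the hyperbolic-step criterion above gives that the semigroup is of zero hyperbolic step. I expect no substantial obstacle: once the identity $d_1=1/b_1$ is established, the conclusion is a direct application of the preceding Lemma and the cited criterion. The only point demanding care is the passage from the leading coefficient of $H$ to that of its reciprocal $1/H$, which is precisely governed by Lemma \ref{lemaso}.
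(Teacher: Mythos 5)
Your proposal is correct and follows essentially the same route the paper intends: the Proposition is stated as an immediate consequence of the preceding Lemma together with the criterion from \cite[Proposition 3.2]{CDP}, with the key input $\Re(b_1)>0$ coming from Proposition \ref{paquito} and the passage to $\Re(d_1)=\Re(1/b_1)>0$ via Lemma \ref{lemaso}, exactly as you argue. Your explicit verification that the additive constant is harmless and that $d_1=1/b_1$ is well defined (since $b_1\neq 0$) fills in the details the paper leaves implicit.
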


We end this section  with an example showing that in general the image of the Koenigs map of a continuous semigroup in $\mathcal G$ is not contained in a half-plane. 
\begin{example} \label{exampleluis} By Theorem \ref{gcoro}, the function $H(s)=2^{-s}+1$ is the infinitesimal generator of a continuous semigroup in $\mathcal G$. By \eqref{derivadaKoenigs}, its Koenigs map is given by 
\[
h(s)=\frac{\text{log} (1+2^{s})}{\ln 2}=s+\frac{\text{Log}(1+2^{-s})}{\ln 2},
\]
where  $\text{log}$ means a suitable continuous branch of the logarithm and $\text{Log}$ denotes the main branch of the logarithm. 
For all $k\in \Z$, the horizontal line 
\[
\{
s\in\C: s=x+iy, \  x\in\R , \  y=\frac{(2k+1)\pi}{\ln 2}
\}
\]
is contained in $h(\C_+)$, so that $h(\C_{+})$ cannot be contained in a half-plane. The reader can check that 
$$
\Phi_{t}(s)=\frac{1}{\ln 2}\log(2^{t}-1+2^{s+t})=s+t+\frac{1}{\ln 2}\text{Log}(1+2^{-s}(1-2^{-t})), \quad t\geq 0, \ s\in \C_{+}.
$$
\end{example}

\section{Semigroups of composition operators in \text{$\mathcal{H}^{\infty}$}}\label{sec:hinfinito}

Theorem \ref{boundedness-Hinfty} and Proposition \ref{Prop:algebraicsemigroup} show that given an algebraic semigroup of composition operators $\{T_t\}_{t\geq0}=\{C_{\Phi_{t}}\}_{t\geq0}$  in $\mathcal{H}^{\infty}$, the family of functions $\{ \Phi_{t}\}_{t\geq0}$ is an algebraic semigroup of analytic functions in $ \C_{+}$ satisfying that 
$$
 \Phi_{t}(s)=c_{\Phi_{t}}s+\varphi_{t}(s), \quad s\in \C_{+}
 $$
 where  $c_{\Phi_{t}}\in\N\cup\{0\}$ and $\varphi_{t}\in\mathcal{D}$. 
 In this section we prove that 
\begin{theorem}\label{gordoinfty}
Let $\{T_t\}_{t\geq0}$ be a strongly continuous semigroup of composition operators in $\mathcal{H}^{\infty}$. Then, $T_t=\mathrm{Id}$ for every $t\geq0$.
\end{theorem}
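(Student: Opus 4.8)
The plan is to reduce to a non-trivial continuous semigroup and then destroy strong continuity with a single singular test function whose bad behaviour lives on the boundary $\Re s=0$.

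First I would record the structure. By Proposition \ref{Prop:algebraicsemigroup}, $\{\Phi_t\}$ is an algebraic semigroup of self-maps of $\C_+$ in $\mathcal G_\infty$, so $\Phi_t(s)=c_t s+\varphi_t(s)$ with $c_t\in\N\cup\{0\}$ and $\varphi_t\in\mathcal D$. Strong continuity in $\mathcal H^\infty$ is formally stronger than in $\mathcal H^2$, and the implication $a)\Rightarrow b)$ of Theorem \ref{gordo1} uses only the monomials $m^{-s}$, which lie in $\mathcal H^\infty$ and satisfy $|m^{-\Phi_t}|\le 1$ on $\C_+$. Here the hypothesis even gives $m^{-\Phi_t}\to m^{-s}$ uniformly on all of $\C_+$, so the $k_t$–Baire argument of that proof applies directly and shows that $\{\Phi_t\}$ is a continuous semigroup. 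Then Proposition \ref{cste1} forces $c_t\equiv 1$, whence $\Phi_t\in\mathcal G$, $\Phi_t(s)=s+\varphi_t(s)$ with $\Re\varphi_t\ge 0$, and $\|\varphi_t\|_{\mathcal H^\infty(\C_\varepsilon)}\to 0$ as $t\to0^+$. Thus the semigroup admits an infinitesimal generator $H$, and it suffices to show that if $H\not\equiv0$ (equivalently $\varphi_t\not\equiv0$ for every $t>0$) then $\{T_t\}$ cannot be strongly continuous.

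The key point is that convergence $\Phi_t\to\mathrm{id}$ on the interior half-planes $\C_\varepsilon$ is far weaker than convergence in the sup-norm of $\mathcal H^\infty$, which also sees $\Re s=0$. I would exploit this with a singular inner function of the disc: set $g(z)=\exp\!\big(\tfrac{z+1}{z-1}\big)$, so that $g\in H^\infty(\mathbb D)$, $\|g\|_\infty\le1$, and $g\notin A(\mathbb D)$ (no continuous extension to $z=1$), and define $f(s)=g(2^{-s})$. Since $2^{-s}\in\mathbb D$ for $\Re s>0$ and $g(w)=\sum_k\widehat g_k w^k$, the function $f(s)=\sum_k\widehat g_k(2^{k})^{-s}$ is a bounded Dirichlet series, so $f\in\mathcal H^\infty$. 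The goal is to prove that $\|T_tf-f\|_{\mathcal H^\infty}=\sup_{\Re s>0}\big|g\big(2^{-s}2^{-\varphi_t(s)}\big)-g(2^{-s})\big|\not\to0$, contradicting strong continuity.

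I would first dispatch the case in which the $\Phi_t$ are automorphisms of $\C_+$: then $\varphi_t=i\beta_t$ is a non-zero imaginary constant, so $f\circ\Phi_t(s)=g(e^{i\theta_t}2^{-s})$ with $\theta_t=-\beta_t\ln 2\to0$, and $\|T_tf-f\|_{\mathcal H^\infty}=\sup_{|z|<1}|g(e^{i\theta_t}z)-g(z)|$; this is exactly the assertion that the rotation semigroup is strongly continuous on $H^\infty(\mathbb D)$ at $g$, which fails precisely because $g\notin A(\mathbb D)$. In the remaining case, Proposition \ref{paquito} gives $\Re b_1>0$ and $H:\C_+\to\C_+$, and I would produce, for each small $t$, boundary-approaching points $s$ (with $2^{-s}$ tending to the singularity $z=1$ of $g$) at which the multiplicative perturbation $2^{-\varphi_t(s)}$ rotates and/or contracts the argument of $g$ by an amount that, set against the unbounded oscillation of $g$ near $z=1$, forces $|g(2^{-s}2^{-\varphi_t(s)})-g(2^{-s})|\ge\delta$ for a fixed $\delta>0$. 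The hard part is exactly this last boundary estimate: because $\varphi_t$ depends on $s$ and is not uniformly small up to $\Re s=0$, the map $f\circ\Phi_t$ is neither a dilation nor a rotation of $f$, so one must select the approach points carefully and quantify the induced change of argument of $g$, reconciling $\varphi_t\to0$ on interior strips with $\varphi_t\not\equiv0$ and its lack of uniform smallness at the boundary.
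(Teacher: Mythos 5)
Your reduction is sound and coincides with the paper's Lemma \ref{lem:gordoinfty}: strong continuity tested on the monomials $m^{-s}$ runs the $k_t$--Baire argument of Theorem \ref{gordo1}, and Proposition \ref{cste1} then gives $c_t\equiv 1$. The automorphism case is also correct, since there $\varphi_t=i\beta t$ and $\|T_tf-f\|_{\mathcal{H}^{\infty}}=\sup_{|z|<1}|g(e^{i\theta_t}z)-g(z)|$, which cannot tend to $0$ for $g\notin A(\D)$ (norm-continuity of rotations on $H^{\infty}(\D)$ forces a.e.\ continuous boundary values). But in the generic, non-automorphism case your argument stops exactly where the theorem lives: you never establish the lower bound $|g(2^{-s}2^{-\varphi_t(s)})-g(2^{-s})|\geq\delta$ at suitably chosen boundary-approaching points, and you concede this is ``the hard part''. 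This is a genuine gap, not a routine verification. All you know about $\varphi_t$ near $\Re s=0$ is $\Re\varphi_t\geq0$; smallness holds only on the interior half-planes $\C_{\varepsilon}$, while the $\mathcal{H}^{\infty}$-norm sees the boundary. Worse, $\Re\varphi_t\geq0$ pulls $2^{-\Phi_t(s)}$ radially inward from $2^{-s}$, and the singular inner function $g$ tends to $0$ along every nontangential approach to its singularity $z=1$, so both values are simultaneously small unless you work on tangential curves, where $|g|$ stays bounded below but where you have no control at all on the size or direction of $\varphi_t(s)$. Note also that your test function $f(s)=g(2^{-s})$ has a limit along \emph{every} horizontal line $x+iy$, $x\to0^+$ (the radial limit of $g$ at the singularity is $0$, and $g$ extends analytically across $\T\setminus\{1\}$), so it is not ruled out of $\overline{D(A)}$ by any soft boundary-regularity criterion; any completion of your plan requires a genuinely new quantitative estimate.

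The paper avoids this estimate entirely by a softer functional-analytic route. If $\{T_t\}$ is a $C_0$-semigroup on $\mathcal{H}^{\infty}$, then $D(A)$ is dense \cite[Theorem 1.4, page 37]{engels}, where $A(f)=Hf'$ with domain $\{f\in\mathcal{H}^{\infty}: Hf'\in\mathcal{H}^{\infty}\}$. If $H\not\equiv0$, a nontangential-limit (Fatou-type) argument yields $y\in\R$ and $\varepsilon,\delta>0$ with $|H(x+iy)|>\delta$ for $x\in(0,\varepsilon)$; then every $f\in D(A)$ satisfies $|f'|\leq M/\delta$ on that segment, hence $\lim_{x\to0^+}f(x+iy)$ exists by integrating $f'$, and this property passes to the norm closure $\overline{D(A)}$ by a three-term approximation. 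The contradiction is furnished by $f(s)=F(2^{-s+iy})$ with $F(z)=\exp\left(i\log\left(\frac{1+z}{1-z}\right)\right)$, a bounded Dirichlet series with no limit along that very line. Thus the density of the generator's domain substitutes for the direct perturbation bound your approach needs: instead of showing $\|T_tf-f\|\not\to0$ for one concrete $f$ (which demands uniform control of $\varphi_t$ up to $\Re s=0$ that non-trivial semigroups in $\mathcal{G}$ need not provide), one shows that strong continuity would impose a boundary regularity on all of $\mathcal{H}^{\infty}$ that a single explicit function violates. If you want to rescue your direct attack, this is the mechanism to emulate; as written, the central case of the theorem remains unproved.
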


We need a preliminary lemma whose proof is a slight adaptation of ``{\sl a)} implies {\sl b)}'' in Theorem \ref{gordo1} so that we omit it:
\begin{lemma} \label{lem:gordoinfty}
Let $\{\Phi_{t}\}$ be a semigroup in $\mathcal G_{\infty}$.
If  $\{T_t\}_{t\geq0}=\{C_{\Phi_{t}}\}_{t\geq0}$   is a strongly continuous semigroup in $\mathcal{H}^\infty$, 
then $\{\Phi_t\}_{t\geq0}$ is a continuous semigroup in $\mathcal G$.
\end{lemma}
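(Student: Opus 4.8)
The plan is to follow the proof that $a)$ implies $b)$ in Theorem \ref{gordo1} almost line by line, with two modifications dictated by the present hypotheses: the space is now $\mathcal{H}^{\infty}$ rather than $\mathcal{H}^{2}$, and the symbols are only assumed to lie in $\mathcal{G}_{\infty}$ rather than in $\mathcal{G}$. The first change actually shortens one step, while the second forces an extra argument at the very end.

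First I would fix an integer $m\geq 2$ and put $f(s)=m^{-s}$, which belongs to $\mathcal{H}^{\infty}$. Since each $\Phi_{t}\in\mathcal{G}_{\infty}$, Theorem \ref{boundedness-Hinfty} guarantees that $T_{t}=C_{\Phi_{t}}$ is a bounded operator on $\mathcal{H}^{\infty}$, so $g_{t}:=T_{t}f=m^{-\Phi_{t}}$ lies in $\mathcal{H}^{\infty}$ as well. Strong continuity gives $\|g_{t}-f\|_{\mathcal{H}^{\infty}}\to 0$ as $t\to 0^{+}$, and because the $\mathcal{H}^{\infty}$ norm is the supremum over $\C_{+}$, this is precisely uniform convergence $m^{-\Phi_{t}(s)}\to m^{-s}$ on all of $\C_{+}$. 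This is the point where the argument becomes simpler than in Theorem \ref{gordo1}: there one had to invoke Bayart's Montel-type Theorem \ref{bayarto} to pass from $\mathcal{H}^{2}$-convergence to uniform convergence on half-planes, whereas here uniform convergence is built into the norm. Multiplying by $m^{s}$, which is bounded on each vertical strip $\{0<\Re s<\sigma\}$, I obtain $m^{s-\Phi_{t}(s)}\to 1$ uniformly on such strips.

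From here the branch-of-logarithm and Baire-category machinery of Theorem \ref{gordo1} transfers verbatim, taking $m=2$: on a strip $\mathbb{S}(\varepsilon,\eta)$ one extracts integers $k_{t}$ with $\varepsilon_{t}(s):=s-\Phi_{t}(s)-2\pi i k_{t}/\ln 2\to 0$ uniformly, derives the additivity $k_{t+\tau}=k_{t}+k_{\tau}$ from the semigroup identity, shows that each $F_{k}=\{t:k_{t}=k\}$ is closed (using the strong continuity, now in $\mathcal{H}^{\infty}$, together with the rational independence of $\ln 2$ and $\ln 3$), and concludes by Baire's theorem that $k_{t}=0$ for all sufficiently small $t$. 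This yields $\Phi_{t}(s)\to s$ uniformly on every vertical strip, hence uniformly on compact subsets of $\C_{+}$, so $\{\Phi_{t}\}_{t\geq 0}$ is a continuous semigroup.

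Finally I must upgrade the membership from $\mathcal{G}_{\infty}$ to $\mathcal{G}$. Now that $\{\Phi_{t}\}$ is known to be a continuous semigroup in $\mathcal{G}_{\infty}$, Proposition \ref{cste1} applies and gives $c_{t}=1$ for every $t$. Since the extra constraint $\Phi_{t}(\C_{+})\subset\C_{1/2}$ in the definition of $\mathcal{G}$ is only imposed when $c_{t}=0$, the condition $c_{t}=1\geq 1$ places each $\Phi_{t}$ in $\mathcal{G}$, as required. The only genuine difficulty, inherited unchanged from Theorem \ref{gordo1}, is ruling out the nonzero branches $k_{t}$, which correspond to spurious vertical translations: because $\{k\ln 3/\ln 2 \bmod 1\}$ is dense, no shortcut replaces the additivity-plus-Baire argument, and that is the step I expect to carry the real weight.
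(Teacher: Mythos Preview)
Your proposal is correct and is exactly the adaptation the paper intends: the authors omit the proof, saying it is ``a slight adaptation of $a)$ implies $b)$ in Theorem \ref{gordo1}'', and you have identified precisely the two adjustments---the $\mathcal{H}^{\infty}$ norm already gives uniform convergence (bypassing Theorem \ref{bayarto}), and Proposition \ref{cste1} is invoked at the end to pass from $\mathcal{G}_{\infty}$ to $\mathcal{G}$. The Baire-category/branch-selection core carries over unchanged, just as you describe.
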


\begin{proof}[Proof of Theorem \ref{gordoinfty}]
Write $\{T_t\}_{t\geq0}=\{C_{\Phi_{t}}\}_{t\geq0}$.
By Lemma \ref{lem:gordoinfty}, $\{\Phi_t\}_{t\geq0}$ is a continuous semigroup in $\C_{+}$. Therefore, it has an infinitesimal generator $H:\C_{+}\to \overline{\C_{+}}$. $H$ is an holomorphic function in $\C_{+}$.  Assume that $H$ is non-zero. Take $L$ a linear fractional map sending the unit disc onto the right half-plane. Then $H\circ L$ is a holomorphic function from $\D$ into $\overline{\C_{+}}$. Then it has non-tangential limit at almost every point in the boundary of the  unit disc and, since it is non-zero, there is a point in the boundary of the unit disc such that the limit is a complex number different from zero (see, i.e., \cite[Theorems 3.2 and 2.2]{duren_theory_2000}).
 Then, there are $y\in\R$, $\varepsilon>0$, and $\delta>0$ such that
\begin{equation}\label{priesa}
|H(x+iy)|>\delta,
 \end{equation}
for all $x\in (0,\varepsilon$). 

A standard argument shows that the infinitesimal generator of the semigroup $\{T_{t}\}$ is given by $A(f)=Hf'$ and, arguing as we did in the proof of Proposition \ref{Prop:infgen}, its domain is
\begin{equation*}
    D(A):=\{ f\in\mathcal{H}^{\infty}:Hf'\in\mathcal{H}^{\infty}
    \}.
\end{equation*}
We claim that for every $f\in D(A)$ there exists 
\begin{equation*}
    \lim_{x\to0^+}f(x+iy).
\end{equation*}
Let us proof this claim. 
Take $f\in D(A)$. Therefore, there exists $M>0$ such that
\begin{equation*}
    |H(s)f'(s)|<M, \quad s\in\C_+.
\end{equation*}
Now,  given $s=x+iy$ such that $x\in(0,\varepsilon)$, putting together this bound and \eqref{priesa},
\begin{equation*}
    \delta|f'(s)|<M.
\end{equation*}
Then, 
\begin{equation}\label{dct}
    \sup_{x\in(0,\varepsilon)}|f'(x+iy)|\leq \frac{M}{\delta}.
\end{equation}
This estimate will allow us to apply the Dominated Convergence Theorem to \begin{align}\label{aliño}
    f(x+iy)=f(\varepsilon+iy)-\int_x^{\varepsilon}f'(u+iy)du.
\end{align}
Indeed, \eqref{dct} and the fact that $\chi_{(x,\varepsilon)}\to\chi_{(0,\varepsilon)}$ as $x\to0^+$ allows us to apply the Dominated Convergence Theorem to the integral in \eqref{aliño}. Taking the limit when $x\to0^+$, we find that
\[
\lim_{x\to0^+}f(x+iy)=\lim_{x\to0^+}\left(f(\varepsilon+iy)-\int_x^{\varepsilon}f'(u+iy)du\right)=f(\varepsilon+iy)-\int_0^{\varepsilon}f'(u+iy)du.
\]
Therefore, the above claim holds.

Take now a function $f\in \overline{D(A)}^{\|\cdot\|_{\mathcal{H}^{\infty}}}$. Again we have that \begin{equation*}
    \lim_{x\to0^+}f(x+iy)
\end{equation*}
exists. 
Let us assume on the contrary that such  limit does not exist. Then, there would exist two sequence of positive  real numbers $\{u_n\}_{n\in\N}$ and $\{v_n\}_{n\in\N}$ both tending to zero and $\lambda_1,\lambda_2\in\C$, $\lambda_1\not=\lambda_2$ such that
\begin{equation*}
    \lim_{n\to\infty}f(u_n+iy)=\lambda_1\quad \text{and}\quad \lim_{n\to\infty}f(v_n+iy)=\lambda_2.
\end{equation*}
Set $\kappa=\frac13|\lambda_1-\lambda_2|>0$. Since $f\in \overline{D(A)}^{\|\cdot\|_{\mathcal{H}^{\infty}}}$, there exists $g\in D(A)$ such that
\[
\|f-g\|_{\mathcal{H}^{\infty}}<\kappa.
\]
Let $\lambda:=\lim_{x\to0^+}g(x+iy)$ (the limit exits by above claim). Then, for every $n\in\N$
\begin{equation*}
    |f(u_n+iy)-g(u_n+iy)|<\kappa.
\end{equation*}
Letting $n\to\infty$, we have that $|\lambda_1-\lambda|\leq\kappa$. An identical argument for $\{v_n\}_n$ instead of $\{u_n\}_{n}$ allows to deduce that $|\lambda_2-\lambda|\leq\kappa$. However, this contradicts the choice we did of $\kappa$. Therefore, for each $f\in \overline{D(A)}^{\|\cdot\|_{\mathcal{H}^{\infty}}}$, there exists \begin{equation*}
    \lim_{x\to0^+}f(x+iy).
\end{equation*}

In virtue of \cite[Theorem 1.4, page 37]{engels}, in order to get a contradiction with the fact that $H$ is non-zero,  it suffices to find a function in $\mathcal{H}^{\infty}$ which does not lie in the closure of the set $D(A)$.  Consider the holomorphic function in the unit disc
\[
F(z)=\exp\left(i\log\left( \frac{1+z}{1-z}\right)\right).
\]
Since $F$ is bounded, we have that  $f(s)=F(2^{-s+iy})$, $s\in \C_{+}$, belongs to $\mathcal{H}^{\infty}$. But the limit 
$   \lim_{x\to0^+}f(x+iy)$ does not exist so that $f\notin  \overline{D(A)}^{\|\cdot\|_{\mathcal{H}^{\infty}}}$. Therefore, $H$ is the null function and the semigroup is the trivial one.
\end{proof}

\end{document}